\DeclareFontFamily{U}{mathc}{}
\DeclareFontShape{U}{mathc}{m}{it}%
{<->s*[1.03] mathc10}{}
\DeclareMathAlphabet{\mathcal}{U}{mathc}{m}{it}
\DeclareMathAlphabet{\matheus}{U}{eus}{m}{n}
\SetMathAlphabet{\matheus}{bold}{U}{eus}{b}{n}
\newtheorem{defi}{Definition}[subsection]
\newtheorem{theorem}[defi]{Theorem}
\newtheorem{pro}[defi]{Proposition}
\newtheorem{lemma}[defi]{Lemma}
\theoremstyle{definition}
\newtheorem{construct}[defi]{Construction}
\newtheorem{example}[defi]{Example}
\providecommand{\customgenericname}{}
\newcommand{\newcustomtheorem}[2]{%
	\newenvironment{#1}[1]
	{%
		\renewcommand\customgenericname{#2}%
		\renewcommand\theinnercustomgeneric{##1}%
		\innercustomgeneric
	}
	{\endinnercustomgeneric}
}
\theoremstyle{remark}
\newtheorem{rk}[defi]{Remark}
\newtheorem{nota}[defi]{Notation}
\newcommand{\bbA}{\mathbbm{A}}
\newcommand{\scrA}{\mathscr{A}}
\newcommand{\frA}{\mathfrak{A}}
\newcommand{\twoA}{\mathcal{A}}
\newcommand{\bbB}{\mathbbm{B}}
\newcommand{\scrB}{\mathscr{B}}
\newcommand{\twoB}{\mathcal{B}}
\newcommand{\bbK}{\mathbbm{K}}
\newcommand{\twoC}{\mathcal{C}}
\newcommand{\bbD}{\mathbbm{D}}
\newcommand{\scrD}{\mathscr{D}}
\newcommand{\twoD}{\mathcal{D}}
\newcommand{\F}{\mathscr{F}}
\newcommand{\bbF}{\mathbbm{F}}
\newcommand{\frX}{\mathfrak{X}}
\newcommand{\frs}{\mathfrak{s}}
\newcommand{\frp}{\mathfrak{p}}
\newcommand{\frq}{\mathfrak{q}}
\newcommand{\frm}{\mathfrak{m}}
\newcommand{\frt}{\mathfrak{t}}
\newcommand{\fri}{\mathfrak{i}}
\newcommand{\frz}{\mathfrak{z}}
\newcommand{\frj}{\mathfrak{j}}
\newcommand{\enrC}{\matheus{C}}
\newcommand{\enrD}{\matheus{D}}
\newcommand*{\monV}{\text{\boldmath{{$\matheus{V}$}}}}
\newcommand*{\Cat}{\mathbf{Cat}}
\newcommand*{\twoCat}{\mathbf{2}$-$\mathbf{Cat}}
\newcommand*{\FCat}{\mathbf{\F}$-$\mathbf{Cat}}
\newcommand*{\SigmaCocone}{\mathbf{MarkedCocone}}
\DeclareMathOperator{\im}{im\;}
\DeclareMathOperator{\inc}{inc}
\newcommand*{\TAlg}{\mathrm{T}$-$\mathrm{Alg}}
\newcommand*{\FTAlg}{\mathrm{T}$-$\mathbbm{Alg}}
\newcommand{\sigmalim}[2]{\mathrm{m}_{#1}\mathrm{lim\;}{#2}}
\newcommand{\dotlim}[2]{\mathrm{d}_{#1}\mathrm{lim\;}{#2}}
\newcommand{\twoEl}[1]{\mathcal{El}{#1}}
\newcommand{\scrEl}[1]{\mathscr{El}{#1}}
\newcommand{\bbEl}[1]{\mathbbm{El}{#1}}
\newcommand{\Lan}[2][]{\mathrm{Lan}_{#2}{#1}}
\newcommand*{\shortbar}{\scalebox{1.6}[1]{-}}
\definecolor{turquoise}{HTML}{00B4CE}
\tikzset{
	marked/.style = {decoration = {markings, mark = at position 0.5 with { 
				\node[transform shape, xscale = .8, yscale=.4] {/}; } }, 
				postaction = 
		{decorate} },
	circled/.style = {decoration = {markings, mark = at position 0.5 with { 
				\node[transform shape, scale = .7] {$\circ$}; } }, postaction = 
				{decorate} 
	},
	loose/.style ={->,
		decorate,
		decoration={snake,amplitude=.4mm,segment length=2mm,post length=1mm}},
}
\tikzset{double line with arrow/.style 
	args={#1,#2}{decorate,decoration={markings,%
			mark=at position 0 with {\coordinate (ta-base-1) at (0,1pt);
				\coordinate (ta-base-2) at (0,-1pt);},
			mark=at position 1 with {\draw[#1] (ta-base-1) -- (0,1pt);
				\draw[#2] (ta-base-2) -- (0,-1pt);
}}}}
\tikzset{dar/.style={double,double equal sign distance,-implies}}
\tikzset{mid/.style={anchor=mid}} 
\newbox\dottedarrow@box
\newcommand*\dottedarrow
\newcommand*\dottedarrow@t[1][1.3em]
\newcommand*\dottedarrow@m[1][]
\relax\detokenize{#1}\relax
\newcommand*\amsmyNeg[2][0mu]{\Neginternal{#1}{\amsnegslash}{#2}}
\newcommand*\Neginternal[3]{\mathpalette\Neg@{{#1}{#2}{#3}}}
\newcommand*\Neg@[2]{\Neg@@{#1}#2}
\newcommand*\Neg@@[4]{%
	\mathrel{\ooalign{%
			$\m@th#1#4$\cr
			\hidewidth$\m@th#3{#1}\mkern\muexpr#2*2$\hidewidth\cr
	}}%
}
\newcommand*\mynegslash[1]{\rotatebox[origin=c]{60}{$\m@th#1\shortbar$}}
\newcommand*\amsnegslash[1]{\rotatebox[origin=c]{60}{$\m@th#1-$}}
\newcommand*{\amsnleadsto}{\amsmyNeg[-2mu]{\leadsto}}
\newcounter{sarrow}
\newcommand\xleadsto[1]{%
	\stepcounter{sarrow}%
	\mathrel{\begin{tikzpicture}[decoration=snake]
			\node (\thesarrow) {$\scriptstyle #1$};
			\draw[->,decorate] (\thesarrow.south west) -- (\thesarrow.south 
			east);
		\end{tikzpicture}
	}
}
\renewcommand{\leadsto}{\xleadsto{\mkern9mu}}
\setlist[enumerate,1]{label=\textup{(\arabic*)}}
\setlist[enumerate,2]{label=\textup{(\alph*)}}
\newcommand{\longref}[2]{\hyperref[#2]{#1~\textup{\ref*{#2}}}}
\newcommand{\eqtref}[2]{\hyperref[#2]{#1~\textup{(\ref*{#2})}}}
\newcommand{\longcite}[1]{\textup{\cite{#1}}}
\newcommand*{\op}{\mathrm{op}}
\DeclareMathOperator{\ob}{ob}
\DeclareMathOperator{\mor}{mor}
\numberwithin{equation}{section}
\newcounter{subsubsubsection}[subsubsection]
\renewcommand\paragraph{\@startsection{paragraph}{5}{\z@}%
	{3.25ex \@plus1ex \@minus.2ex}%
	{-1em}%
	{\normalfont\normalsize\bfseries}}
\renewcommand\subparagraph{\@startsection{subparagraph}{6}{\parindent}%
	{3.25ex \@plus1ex \@minus .2ex}%
	{-1em}%
	{\normalfont\normalsize\bfseries}}
\def\toclevel@subsubsubsection{4}
\def\toclevel@paragraph{5}
\def\toclevel@paragraph{6}
\def\l@subsubsubsection{\@dottedtocline{4}{7em}{4em}}
\def\l@paragraph{\@dottedtocline{5}{10em}{5em}}
\def\l@subparagraph{\@dottedtocline{6}{14em}{6em}}
\title{Dotted $2$-limits}
\author{Joanna Ko}
\email{joanna.ko.maths@gmail.com}
\address{Department of Mathematics and Statistics, Masarykova univerzita, 
	Kotlářská 2, Brno 61137, Czech Republic}
\keywords{enhanced $2$-limit; marked limit; codescent objects; 2-monads; enhanced 
	$2$-category theory; $\F$-categories}
\begin{document}
	
	\begin{abstract}
		Marked limits, or Cartesian quasi-limits introduced by Gray, give an 
		alternative approach to $\Cat$-weighted limits in $2$-category theory.  
		This was first established by Street, and we aim to give a new approach 
		to this 
		result using marked codescent objects of marked coherence data which we 
		introduce in this article.  We then propose the notion of dotted 
		$2$-limits, which is a natural generalisation of marked limits to the 
		enhanced $2$-categorical setting. We establish that dotted $2$-limits and 
		$\F$-weighted limits both have the same expressive power.
	\end{abstract}
	\maketitle
	\tableofcontents
	
	\section{Introduction}
	\label{sec:intro}
	Marked limits\footnote{We prefer the name marked limits, because it 
		hints that certain collection of morphisms are special and recorded. We 
		borrow this terminology from marked simplicial sets. Apart from that, 
		we 
		discovered that in \longcite{GHL:2022}, the authors used the name 
		\emph{marked limits} for the case which replaces the 
		identity 
		$2$-cells in our case with invertible $2$-cells. This notion was first 
		considered in \longcite{book:Gray:1974}, and recently in  
		\longcite{DDS:2018}. We believe that this 
		terminology is convenient and accurate in depicting the situation. } of 
	$2$-functors were 
	first introduced by Gray in \longcite{book:Gray:1974}, under the name 
	\emph{Cartesian quasi-limits}. Later in \longcite{Szyld:2019}, Szyld 
	investigated the notion under the name \emph{$\sigma$-$s$-limits}. In 
	Mesiti's recent work \longcite{Mesiti:2023}, this notion was studied in a 
	more restrictive sense under the name \emph{lax 
		normal conical 2-limits}.
	
	Marked limits are gaining attention in $2$-category theory, mainly due to 
	their 
	convenience and simplicity. In contrast, the notion of $\Cat$-weighted 
	limits, though 
	captures all the $2$-categorical limits that we are interested in, is 
	sometimes convoluted, as it is designed for categories enriched in any 
	arbitrary 
	closed symmetric monoidal category $\monV$, but not exclusively for $\Cat$. 
	On 
	the other hand, marked limits often provide a clearer and a more spread-out 
	presentation of $2$-dimensional limits, which makes them more convenient to 
	track and handle. Indeed, marked limits are very similar to lax limits, in 
	the 
	sense that both of them consider non-strict natural transformations; while 
	the latter considers lax natural transformations alone, the former 
	considers lax natural transformations that restricts to $2$-natural 
	transformations with respect to a specific class of morphisms in the domain 
	$2$-category. This 
	makes the notion more flexible, by allowing different kind of shapes for 
	the cones rather 
	than only triangles, and is the reason why marked limits should be able to 
	capture all $2$-categorical limits of interest.
	
	Seeing that marked limits possess some advantages over $\Cat$-weighted 
	limits, it is then crucial to show that marked limits are as expressive as 
	$\Cat$-weighted limits, so that any $2$-categorical limits of interest can 
	be described using marked limits.
	
	It is well-known that a $\Cat$-weighted limit can be expressed as a 
	marked-lax 
	limit with 
	the same universal property. This has first been discussed in Street's 
	paper \longcite{Street:1976}, and also in the recent 
	work \longcite{Mesiti:2023}. The converse, which says that an marked-lax 
	limit can be turned into a 
	$\Cat$-weighted limit with the same universal property, has also been 
	established in \longcite{Street:1976}. In this article, we provide an 
	alternative proof via a new perspective.

	Our proof is done by constructing the weak morphism classifier, which will 
	then 
	give us the left adjoint $()^\ddagger$ to the inclusion $[\twoC, \Cat] 
	\hookrightarrow 
	[\twoC, \Cat]_{l, \Sigma}$ of the category of $2$-functors $\twoC \to \Cat$ 
	and $2$-natural transformations to that of marked-lax natural 
	transformations. The construction of the weak morphism classifier relies on 
	the notion of \emph{marked codescent objects of 
		marked coherence data}, which is a modified version of lax codescent 
	objects 
	of 
	strict coherence data studied by Lack in \longcite{Lack:2002}.
	
	Next, we introduce the notion of \emph{dotted $2$-limits}, which is the 
	enhanced $2$-categorical counterpart of marked limits.
	
	The study of enhanced $2$-category theory began in \longcite{LS:2012} by 
	Lack and Shulman. The main reason for developing the theory is that, 
	indeed, many interesting categorical structures in 
	Categorical Algebra are $T$-algebras, for a $2$-monad $T$ on a $2$-category 
	$\twoA$; most 
	of the time, however, the morphisms between them are the 
	pseudo or lax $T$-morphisms, instead of the strict $T$-morphisms. For 
	instance, lax monoidal functors and pseudo (strong) monoidal functors 
	between monoidal categories are more common than strict monoidal functors. 
	Therefore, studying $\TAlg_w$ for $w = l, p, c$ 
	becomes salient. In 
	particular, whether or not $\TAlg_w$ admits all the limits that $\twoA$ 
	possesses is a key question. In \longcite{Lack:2005}, Lack has shown that 
	the 
	answer to the question in the case of $\TAlg_l$ depends heavily on the 
	relationship between $\TAlg_s$ and $\TAlg_l$. Thus, instead of viewing them 
	as separate $2$-categories, we should combine $\TAlg_s$ and $\TAlg_l$ into 
	one single structure, which is the starting point of $\F$-categories and 
	enhanced $2$-category theory.
	
	In our paper, we propose the notion of dotted $2$-limits, which, very much in 
	the same spirit of marked limits, often provides a more convenient 
	expression and simpler description than $\F$-weighted limits. To illustrate 
	its convenience, we 
	describe several examples of $\F$-categorical limits as dotted 
	limits.
	
	Our main results in the paper:
	\begin{custommainthm}{5.3.6}
		The dotted-lax limit of an $\F$-functor $S \colon \bbD \to 
		\bbA$ has the same universal property as the $\F$-weighted limit 
		$\{\triangle(\mathbf{1})^\#, 
		S\}$.
	\end{custommainthm}
	\begin{custommainthm}{5.3.9}
		Let $\Phi \colon \bbD \to \bbF$ be an $\F$-weight and $S \colon \bbD 
		\to 
		\bbA$ be an $\F$-functor. Let $\Sigma := \{(d \in \mor \bbD, 1)\}$ be 
		the class of 
		morphisms, and $T := \{(D, \delta \in (\Phi D)_\tau)\}$ be a collection 
		of objects in the $\F$-category of elements $\bbEl{\Phi}$ of $\Phi$. 
		The 
		$\F$-weighted limit $\{\Phi, S\}$ has the same universal property as 
		the 
		dotted-lax limit of the $\F$-functor $SP \colon (\bbEl{\Phi}, \Sigma, 
		T) \to 
		\bbA$.
	\end{custommainthm}
	\noindent say that any dotted $2$-limits can be 
	equivalently expressed as $\F$-weighted limits, and vice versa. The 
	latter can be done in a similar fashion as in the $2$-categorical case, 
	whereas the former is done by constructing the tight part for the left 
	adjoint, using mainly left Kan extensions and embedded image factorisations.
	
	The outline of the paper goes as follows. In \longref{Section}{sec:prelim}, 
	we briefly recall the preliminary notions, particularly the basics of 
	$2$-monad theory. In 
	\longref{Section}{sec:marked}, we begin by recalling marked limits and move 
	on to introduce the notion of marked codescent objects of marked coherence 
	data, and then give a complete and detailed proof of that any marked limit 
	has the same universal property 
	as a $\Cat$-weighted limit. In \longref{Section}{sec:F}, we recall the 
	basics of enhanced $2$-category theory, including notions such as 
	$\F$-categories and $\F$-weights. In \longref{Section}{sec:dotted}, we 
	introduce 
	the notion of dotted $2$-limits, and present several important examples; last 
	but not 
	least, we show the equivalence of dotted $2$-limits and $\F$-weighted limits.
	
	\section*{Acknowledgement}
	The author would like to thank {John Bourke} for his clear guidance, 
	beneficient suggestions and inspiring opinions, which helps her get more 
	familiar with enhanced $2$-category theory and advances her knowledge of 
	category theory to a huge extent. The author is also grateful to Giacomo 
	Tendas for his helpful comments and revisions.

	\section{Preliminaries on basic category theory, enriched category theory 
		and 
		\texorpdfstring{$2$}{2}-category theory}
	\label{sec:prelim}
	
	\subsection{Weighted limits}
	A standard reference for enriched category theory is Kelly's book 
	\longcite{book:Kelly:1982}.
	
	Let $\monV$ be a closed symmetric monoidal category, $\enrC, \enrD$ be two 
	$\monV$-categories, $W \colon \enrC \to \monV$ 
	be a $\monV$-weight, and $F \colon \enrC \to \enrD$ be a $\monV$-functor.
	
	\begin{defi}
		\label{def:weighted_lim}
		The \emph{$\monV$-weighted limit $\{W, F\}$ of $F$ by $W$} is 
		characterised 
		by an isomorphism 
		\begin{equation}
			\label{eqt:weighted_lim}
			\enrD(D, \{W, F\}) \cong [\enrC, \monV](W, \enrD(D, F-))
		\end{equation}
		in $\monV$, which is natural in $D \in \ob \enrD$.
	\end{defi}
	
	\begin{rk}
		Dually, let $G \colon \enrC^\op \to \enrD$ be a $\monV$-functor. The 
		\emph{$\monV$-weighted colimit $W * G$ of $G$ by $W$} is characterised 
		by 
		\begin{equation}
			\label{eqt:weighted_colim}
			\enrD(W * G, D) \cong [\enrC, \monV](W, \enrD(G-, D))
		\end{equation}
		in $\monV$, which is natural in $D \in \ob \enrD$.
	\end{rk}
	
	\subsection{\texorpdfstring{$2$}{2}-monad theory}
	\label{subsec:monad}
	
	We recall some notions and results from $2$-dimensional 
	monad theory. For an introduction, the 
	classical work \longcite{BKP:1989} by Blackwell, Kelly, and Power has  
	provided some basics for the subject, apart from this, we suggest the 
	doctoral thesis \longcite{Bourke:2010} by Bourke as a gentle introductory 
	literature on the topic.
	
	\begin{defi}
		\label{def:T-alg}
		Let $(T, \mu, \eta)$ be a $2$-monad. A \emph{strict $T$-algebra} 
		consists 
		of a pair $(A, a)$, where $A$ is an object of 
		$\twoA$ and $a \colon TA \to A$ is a morphism called the 
		\emph{structure 
			morphism}, satisfying the multiplication and unity
		conditions.
	\end{defi}
	
	\begin{defi}
		\label{def:T-mor}
		Let $(A, a)$ and $(B, b)$ be strict
		$T$-algebras. A \emph{lax $T$-morphism} $(f, \overline{f}) \colon (A, 
		a) \to (B, b)$ consists of a morphism 
		$f \colon A \to B$ equipped with a $2$-cell $\overline{f} \colon b 
		\cdot Tf \to fa$, satisfying the multiplicative and the unital 
		coherence conditions, respectively:
		\begin{equation*}
			\begin{tikzcd}[row sep = small, column sep = tiny]
				& T^2B \ar[rr, "Tb"] \ar[dr, shorten <= -0.2cm, 
				shorten >= -0.2cm, "\mu_B"'] & \phantom{1} & TB \ar[dr, shorten 
				<= -0.2cm, 
				shorten >= -0.2cm, "b"] & 
				\\
				T^2A \ar[ur, shorten <= -0.2cm, 
				shorten >= -0.2cm, "T^2f"] \ar[dr, shorten <= -0.2cm, 
				shorten >= -0.2cm, "\mu_{A}"'] & & TB \ar[rr, "b"] & 
				\phantom{1} \ar[dl, phantom, "\Downarrow\overline{f}"] & B
				\\
				& TA \ar[ur, shorten <= -0.2cm, 
				shorten >= -0.2cm, "Tf"] \ar[rr, "a"'] & \phantom{1} & A 
				\ar[ur, shorten <= -0.2cm, 
				shorten >= -0.2cm,
				"f"']&
			\end{tikzcd}
			=
			\begin{tikzcd}[row sep = small, column sep = tiny]
				& T^2B \ar[rr, "Tb"] & \phantom{1} \ar[dl, phantom, "\Downarrow 
				T\overline{f}"] & TB \ar[dr, shorten <= -0.2cm, 
				shorten >= -0.2cm, "b"] \ar[dd, phantom, 
				"\Downarrow\overline{f}"] & 
				\\
				T^2A \ar[ur, shorten <= -0.2cm, shorten >= -0.2cm, "T^2f"] 
				\ar[rr, "Ta"'] \ar[dr, shorten <= -0.2cm, shorten >= 
				-0.2cm, 
				"\mu_{TA}"']& \phantom{1} 
				& TA 
				\ar[ur, shorten <= -0.2cm, shorten >= -0.2cm, "Tf"] \ar[dr, 
				shorten <= -0.2cm, shorten >= -0.2cm, "a"'] & & B
				\\
				& TA \ar[rr, "a"'] & \phantom{1} & A \ar[ur, shorten <= -0.2cm, 
				shorten >= -0.2cm, "f"']&
			\end{tikzcd},
		\end{equation*}
		\begin{equation*}
			\begin{tikzcd}[row sep = tiny, column sep = tiny]
				& B \ar[dr,  shorten <= -0.2cm, 
				shorten >= -0.2cm,  "\eta_B"'] \ar[drrr, shorten >= -0.2cm, 
				bend left, "1"]&  & 
				\phantom{1}  &
				\\
				A \ar[ur,  shorten <= -0.2cm, 
				shorten >= -0.2cm, "f"] \ar[dr,  shorten <= -0.2cm, 
				shorten >= -0.2cm, "\eta_{A}"'] &  & TB \ar[rr, "b"]& 
				\phantom{1}\ar[dl, phantom, "\Downarrow\overline{f}"] & B
				\\
				&  TA \ar[ur, shorten <= -0.2cm, 
				shorten >= -0.2cm,  "Tf"] \ar[rr, "a"'] & \phantom{1} & A 
				\ar[ur, shorten <= -0.2cm, 
				shorten >= -0.2cm, "f"'] &
			\end{tikzcd}
			=
			\begin{tikzcd}[row sep = tiny, column sep = tiny]
				& B  \ar[drrr, shorten >= -0.2cm, bend left, "1"]&  & 
				\phantom{1}  &
				\\
				A \ar[ur,  shorten <= -0.2cm, 
				shorten >= -0.2cm, "f"] \ar[dr,  shorten <= -0.2cm, 
				shorten >= -0.2cm, "\eta_{A}"'] \ar[drrr, shorten >= -0.2cm, 
				bend left, "1"] & 
				\phantom{1}  &
				&  & B
				\\
				&  TA  \ar[rr, "a"'] & \phantom{1} & A 
				\ar[ur, shorten <= -0.2cm, 
				shorten >= -0.2cm, "f"'] &
			\end{tikzcd}.
		\end{equation*}
		
		If $\overline{f}$ is invertible, then it is called a \emph{pseudo 
			$T$-morphism}; if $\overline{f}$ is identity, then it is called a 
		\emph{strict $T$-morphism}.
		
		A \emph{colax $T$-morphism} is defined in the same way except that the 
		$2$-cell $\overline{f}$ is reversed in direction.
	\end{defi}

	\begin{defi}
		\label{def:T-trans}
		Let $(f, \overline{f})$ and $(g, \overline{g})$ be lax $T$-morphisms 
		from $(A, a)$ to $(B, b)$ as defined in 
		\longref{Definition}{def:T-mor}. A 
		\emph{$T$-transformation} $\rho \colon (f, \overline{f}) \to (g, 
		\overline{g})$ is a $2$-cell $\rho \colon f \Rightarrow g$ in $\twoA$, 
		satisfying
		\begin{equation*}
			\begin{tikzcd}
				TA \ar[d, "a"'] \ar[r, bend left, "Tf"{name=T}] \ar[r, bend 
				right, 
				"Tg"'{name=M}] \ar[Rightarrow, shorten=2mm, from=T, to=M, 
				"T\rho"] & TB 
				\ar[d, "b"] 
				\\
				A \ar[r, bend right, "g"'] \ar[Rightarrow,
				shorten=5mm,  start 
				anchor={[yshift=-0.25cm]}, end 
				anchor={[yshift=-0.25cm]},
				from=1-2, "\overline{g}"]& B
			\end{tikzcd}
			=
			\begin{tikzcd}
				TA \ar[d, "a"'] \ar[r, bend left, "Tf"] & TB \ar[d, "b"] 
				\\
				A \ar[r, bend right, "g"'{name=B}] \ar[r, bend left, 
				"f"{name=M}] \ar[Rightarrow, shorten=2mm, from=M, to=B, 
				"\rho"] \ar[Rightarrow,
				shorten=5mm,  start 
				anchor={[yshift=0.25cm]}, end 
				anchor={[yshift=0.25cm]},
				from=1-2, "\overline{f}"'] & B
			\end{tikzcd}.
		\end{equation*}
	\end{defi}
	
	\begin{nota}
		We write $\TAlg_l$ for the $2$-category of strict $T$-algebras, lax 
		$T$-morphisms, and $T$-transformations, $\TAlg_p$ for the $2$-category 
		of strict $T$-algebras, pseudo $T$-morphisms, and $T$-transformations, 
		and  $\TAlg_s$ for the $2$-category of strict $T$-algebras, strict 
		$T$-morphisms, and $T$-transformations; whereas for that of colax 
		$T$-morphisms, we write $\TAlg_c$.
	\end{nota}
	
	\subsubsection{Presheaf \texorpdfstring{$2$}{2}-categories}
	\label{subsubsec:presheaves_2-categories}
	
	\begin{nota}
		Let $\twoA, \twoB$ be $2$-categories. We denote the $2$-category of 
		$2$-functors, lax natural 
		transformations, 
		and modifications by $[\twoA, \twoB]_l$.
	\end{nota}
	
	Let $\twoC$ be a $2$-category. It is worth mentioning how the 
	presheaf 
	$2$-category $[\twoC, \Cat]$ corresponds to the 
	$2$-category of (strict) $T$-algebras, strict $T$-morphisms, and 
	$T$-transformations, whereas $[\twoC, \Cat]_l$ corresponds to the 
	$2$-category 
	of (strict) $T$-algebras, lax $T$-morphisms, and 
	$T$-transformations,
	for some $2$-monad $T$, because this is the key motivation to our proposal 
	of codescent objects of marked coherence data in 
	\longref{Section}{subsec:2-equiv}.
	
	Let $E \colon \ob\twoC \to \twoC$ denote the 
	canonical identity-on-object 
	$2$-functor.
	
	There is a standard result:
	
	\begin{lemma}
		\label{lem:Lan_E_as_left_adj}
		The left Kan extension along $E$ is left $2$-adjoint to the 
		pre-composition 
		with $E$:
		\begin{center}
			\begin{tikzcd}[arrows=<-]
				{[\twoC, \Cat]} \ar[r, bend left, "\Lan{E}"] \ar[r, phantom, 
				"\bot"] & {[\ob\twoC, 
					\Cat]} \ar[l, bend left, "E^*"]
			\end{tikzcd}.
		\end{center}
		Moreoever, for any $2$-functor $X \colon \twoC \to \Cat$, there is a 
		formula:
		\begin{equation}
			\label{eqt:Lan_E}
			\Lan[X]{E} = \sum_{d \in \twoC} \twoC(d, -) \times X(d).
		\end{equation} 
	\end{lemma}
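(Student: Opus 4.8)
The plan is to prove both assertions at once by writing down the candidate functor
\[
L \colon [\ob\twoC, \Cat] \to [\twoC, \Cat], \qquad L(X) \defeq \sum_{d \in \ob\twoC} \twoC(d,-) \times X(d),
\]
and showing that it is left $2$-adjoint to $E^*$. Since a left $2$-adjoint to the restriction $E^* = (-)\circ E$ is precisely the left Kan extension along $E$, this identifies $\Lan{E}$ with $L$ and establishes the formula \eqref{eqt:Lan_E} simultaneously. First I would dispose of the routine point that $L(X)$ is a genuine $2$-functor $\twoC \to \Cat$: each summand $\twoC(d,-) \times X(d)$ is the copower $X(d) \cdot \twoC(d,-)$ of a representable, hence a $2$-functor, and the coproduct over the (small) set $\ob\twoC$ is formed pointwise in $[\twoC, \Cat]$; $2$-functoriality of $X \mapsto L(X)$ is equally mechanical.

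The core of the argument is a chain of isomorphisms of hom-categories, $2$-natural in $Y \in [\twoC, \Cat]$. Using that $[\twoC, \Cat](-, Y)$ turns the coproduct into a product, then the copower adjunction, and finally the enriched Yoneda lemma $[\twoC, \Cat](\twoC(d,-), Y) \cong Y(d)$, I obtain
\[
[\twoC, \Cat](L(X), Y) \cong \prod_{d} [\twoC, \Cat]\bigl(X(d) \cdot \twoC(d,-),\, Y\bigr) \cong \prod_{d} \Cat\bigl(X(d), Y(d)\bigr).
\]
On the other hand, because $\ob\twoC$ is a discrete $2$-category, a morphism $X \to E^*Y = Y \circ E$ in $[\ob\twoC, \Cat]$ is subject to no naturality constraint, and $E d = d$, so
\[
[\ob\twoC, \Cat](X, E^* Y) \cong \prod_{d \in \ob\twoC} \Cat\bigl(X(d), Y(d)\bigr).
\]
Comparing the two right-hand sides yields the isomorphism $[\twoC, \Cat](L(X), Y) \cong [\ob\twoC, \Cat](X, E^* Y)$ that exhibits the adjunction.

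I expect the main work to be $2$-categorical bookkeeping rather than any single difficult idea: one must check that each displayed isomorphism is an isomorphism of \emph{hom-categories} and is $2$-natural in both variables, so that what we obtain is a genuine $\Cat$-enriched ($2$-)adjunction and not merely a bijection of underlying hom-sets. This rests on the enriched Yoneda lemma, on the behaviour of copowers in $\Cat$ (where the copower of $A$ by $B$ is $B \times A$), and on the fact that copowers and coproducts in $[\twoC, \Cat]$ are computed pointwise. Once $2$-naturality is secured, uniqueness of adjoints identifies $L$ with $\Lan{E}$, proving the adjunction and the formula together. A higher-level alternative would be to invoke the general existence of pointwise left Kan extensions into the cocomplete $\Cat$-category $\Cat$, whose coend formula $\Lan[X]{E}(c) = \int^{d} \twoC(E d, c) \cdot X(d)$ collapses to $\sum_{d} \twoC(d,c) \times X(d)$ precisely because $\ob\twoC$ is discrete.
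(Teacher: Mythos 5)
Your proposal is correct, but it runs in the opposite direction from the paper's own argument. The paper disposes of the lemma in one line: it invokes the universal property of the left Kan extension (which exists pointwise since $\Cat$ is cocomplete and $\twoC$ is small) to get the adjunction $\Lan{E} \dashv E^*$, and implicitly treats the formula \eqref{eqt:Lan_E} as the standard coend formula $\int^{d} \twoC(Ed, -) \cdot X(d)$ collapsing to a coproduct because the domain $\ob\twoC$ is discrete --- this is exactly the ``higher-level alternative'' you sketch in your closing paragraph. Your primary argument instead takes the explicit formula as the definition of a candidate functor $L$, verifies by hand (coproduct, copower adjunction, enriched Yoneda, and discreteness of $\ob\twoC$ killing all naturality constraints on the right-hand side) that $L \dashv E^*$ as a $\Cat$-enriched adjunction, and then identifies $L$ with $\Lan{E}$ by uniqueness of adjoints. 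What your route buys is a genuine proof of the formula \eqref{eqt:Lan_E}, which the paper asserts but never derives, and it makes visible exactly where discreteness of $\ob\twoC$ is used on each side of the adjunction; what it costs is the $2$-naturality bookkeeping you correctly flag, whereas the paper's appeal to the universal property gets the enriched adjunction for free. One small point of care in your argument: uniqueness of adjoints identifies $L$ with the left adjoint to $E^*$, and one should note (as you do implicitly) that a left adjoint to restriction along $E$ computes left Kan extensions --- with $\Cat$ cocomplete these agree with the pointwise ones, so no gap remains.
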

	\begin{proof}
		The adjunction follows immediately from the univeral property of the 
		left Kan extension of any functor $\twoC \to \Cat$ along $E$.
	\end{proof}
	
	Now, we define a $2$-monad $T := E^* \Lan{E} \colon [\ob\twoC, 
	\Cat] \to 
	[\ob\twoC, \Cat]$, with multiplication and unit given by
	\begin{center}
		\scalebox{0.9}{\parbox{1.11\linewidth}{%
				\begin{align*}
					(\mu_X)_c \colon \sum_{d_2 \in 
						\twoC} \sum_{d_1 \in 
						\twoC} \sum_{d 
						\in \twoC} 
					\twoC(d_2, c) \times \twoC(d_1, 
					d_2) \times 
					\twoC(d,d_1) \times 
					X(d) &\to \sum_{d \in \twoC} 
					\twoC(d, c) \times X(d),
					\\
					(f \colon d_1 \to c, g \colon d \to 
					d_1, x \in X(d)) 
					&\mapsto (fg 
					\colon d \to c, x \in X(d)),
				\end{align*}
		}}
	\end{center} and
	\begin{align*}
		(\eta_X)_c \colon X(c) &\to \sum_{d \in \twoC} 
		\twoC(d, c) \times 
		X(d),
		\\
		x &\mapsto (1_c, x),
	\end{align*} respectively.
	
	Our later construction of marked codescent objects of marked coherence 
	data depends on the correspondence between the data of lax $T$-morphisms 
	and that of lax natural transformations.
	
	\begin{theorem}[{\cite[Example 6.6]{BKP:1989}}]
		\label{thm:presheaves_cats_are_T-Alg}
		The $2$-category $\TAlg_l$ of (strict) $T$-algebras, lax $T$-morphisms, 
		and $T$-transformations is isomorphic to $[\twoC, \Cat]_l$.
		
		Similarly, the $2$-category $\TAlg_s$ of (strict) $T$-algebras, strict 
		$T$-morphisms, and $T$-transformations is isomorphic to $[\twoC, \Cat]$.
	\end{theorem}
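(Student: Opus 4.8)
The plan is to exhibit explicit assignments in both directions and verify that they are mutually inverse $2$-functors; the heart of the matter is a chain of natural bijections turning the defining data of a $T$-algebra into that of a $2$-functor. First I would unpack a strict $T$-algebra structure $a \colon TX \to X$ on an object $X \in [\ob\twoC, \Cat]$ (\longref{Definition}{def:T-alg}). Since $TX(c) = \sum_{d \in \twoC} \twoC(d,c) \times X(d)$, the universal property of the coproduct presents $a_c$ as a family of functors $\twoC(d,c) \times X(d) \to X(c)$, one for each $d$, and cartesian closedness of $\Cat$ lets us curry each into a functor $\twoC(d,c) \to [X(d), X(c)]$. This is precisely the assignment of a functor $X(g) \colon X(d) \to X(c)$ to each morphism $g \colon d \to c$ and a natural transformation to each $2$-cell, i.e.\ the action on hom-categories of a $2$-functor $X \colon \twoC \to \Cat$. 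Using the explicit formulas for $\mu$ and $\eta$ above, I would then check that the algebra multiplication axiom is exactly functoriality $X(fg) = X(f)X(g)$ and the unit axiom exactly $X(1_c) = \id_{X(c)}$, giving the bijection on objects.

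For lax $T$-morphisms $(f, \overline f) \colon (X,a) \to (Y,b)$, I would evaluate the structure $2$-cell $\overline f \colon b \cdot Tf \Rightarrow f \cdot a$ at each $c$ and restrict it along the coproduct inclusion for $d$; this yields, for every $g \colon d \to c$, a natural transformation with components $Y(g)(f_d x) \to f_c(X(g) x)$, that is, a $2$-cell $Y(g) \circ f_d \Rightarrow f_c \circ X(g)$. Under the identification of the previous step these are exactly the structure $2$-cells of a lax natural transformation $f \colon X \Rightarrow Y$. The multiplicative coherence condition of \longref{Definition}{def:T-mor} then translates, again through the formula for $\mu$, into the composition axiom for a lax natural transformation, and the unital condition into normalisation at identity morphisms. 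Specialising to the case $\overline f = \id$ forces $Y(g) \circ f_d = f_c \circ X(g)$, i.e.\ strict $2$-naturality, which will supply the second isomorphism.

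Finally, a $T$-transformation $\rho$ (\longref{Definition}{def:T-trans}) is by definition a $2$-cell $f \Rightarrow g$ in $[\ob\twoC, \Cat]$, namely a family of natural transformations $\rho_c \colon f_c \Rightarrow g_c$, and its single defining equation unwinds componentwise over the coproduct into precisely the modification axiom relating $\overline f$ and $\overline g$. Assembling these three bijections and confirming that they preserve the relevant composites gives the strict isomorphisms of $2$-categories $\TAlg_l \cong [\twoC, \Cat]_l$ and, on the strict part, $\TAlg_s \cong [\twoC, \Cat]$. I expect the main obstacle to lie in the morphism layer: showing that the multiplicative coherence axiom for $\overline f$ matches the lax naturality composition axiom requires carefully threading the explicit $\mu$ through the coproduct-and-currying identifications, and it is here that one must take the greatest care not to drop a summand or reverse the direction of a $2$-cell.
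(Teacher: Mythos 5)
Your proposal is correct and follows essentially the same route as the paper's proof: decompose the structure map $a_c$ over the coproduct, curry via cartesian closedness (the paper's tensor--hom adjunction) to recover the $2$-functor data, and match the multiplication/unit axioms with functoriality/unity and the lax-morphism coherences with lax naturality, with strictness of $\overline{f}$ giving $2$-naturality. In fact you go slightly further than the paper's proof of this theorem, which stops at the level of $1$-cells and defers the correspondence between $T$-transformations and modifications to the later proof of \longref{Proposition}{pro:sigma_cocone_bijection_with_sigma_trans}, whereas you verify it directly.
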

	\begin{proof}
		A strict $T$-algebra is a pair $(A \in [\ob\twoC, \Cat], a \colon TA 
		\to A)$ satisfying
		\begin{center}
			\begin{tikzcd}
				T^2A \ar[r, "Ta"] \ar[d, "\mu_A"'] & TA \ar[d, "a"]
				\\
				TA \ar[r, "a"'] & A
			\end{tikzcd},
			\begin{tikzcd}
				A \ar[r, "\eta_A"] \ar[dr, "1_A"'] & TA \ar[d, "a"]
				\\
				& A
			\end{tikzcd}.
		\end{center}
		Note that $a \colon TA \to A = \sum_{d \in \twoC} \twoC(d, -) \times 
		A(d) 
		\to A$ is a $2$-natural transformation with component $a_c \colon 
		\sum_{d 
			\in \twoC} \twoC(d, c) \times A(d) \to A(c)$, which is simply a 
		functor 
		$a_{c, d} \colon \twoC(d, c) \times A(d) \to A(c)$ for a choice of $d 
		\in 
		\ob\twoC$. Now by the tensor-hom adjunction, this is equivalent to a 
		functor
		\begin{equation*}
			\twoC(d, c) \to \Cat(A(d), A(c)).
		\end{equation*}
		So the data of a structure morphism $a \colon TA \to A$ is equivalent 
		to a 
		functor $A(f) \colon A(d) \to A(c)$ induced by a morphism $f \colon d 
		\to c$
		in $\twoC$, and a natural transformation 
		\begin{align*}
			A(\gamma) \colon A(f_1) &\to A(f_2),
			\\
			(x_1 \mapsto y_1) &\mapsto (x_2 \mapsto y_2),
		\end{align*}
		induced by a $2$-cell $\gamma \colon f_1 \Rightarrow f_2$ in $\twoC$, 
		where 
		$f_i \colon d \to c$ and $x_i \in A(d)$, for $i = 1, 2$. These are 
		precisely the data of a 
		$2$-functor $A \colon \twoC \to \Cat$.
		
		Next, for an object $(f \colon d_1 \to c, g \colon d \to d_1, x \in 
		A(d))$ 
		of $\sum_{d_1 \in 
			\twoC} \sum_{d \in \twoC} 
		\twoC(d_1, c) \times \twoC(d,d_1) \times A(d)$, the commutative 
		diagrams 
		above amount to
		\begin{align*}
			(a \cdot Ta)_c (f, g, x) = a(f, A(g)(x))
			= A(fg)(x),
		\end{align*}
		and similarly, $A(f)(A(g))(\chi) = A(fg)(\chi)$ for a morphism $\chi 
		\colon 
		x_1 \to x_2$ in $A(d)$; and amount to 
		\begin{equation*}
			(a \cdot \eta_A)_c (x) = a(1_c, x) = A(1_c)(x) = 
			1_{A(c)}(x),
		\end{equation*}
		for an object $x$ of $A(c)$, and similarly, $A(1_c)(\chi) = 
		1_{A(c)}(\chi)$ 
		for a morphism $\chi \colon 
		x_1 \to x_2$ in $A(c)$. These are precisely the functoriality and unity 
		of 
		the 
		$2$-functor $A \colon \twoC \to \Cat$.
		
		Therefore, a strict $T$-algebra $(A, a)$ corresponds to a $2$-functor 
		$A 
		\colon \twoC \to \Cat$.
		
		Let $(F \colon A \to B, \overline{F} \colon b \cdot TF \Rightarrow F 
		\cdot 
		a)$ be a lax $T$-morphism. Then, we have the coherence
		\begin{equation}
			\label{thm:eqt:T-mor-1}
			\begin{tikzcd}[row sep = small, column sep = 0.2em]
				& T^2B \ar[rr, "Tb"] \ar[dr, shorten <= -0.2cm, 
				shorten >= -0.2cm, "\mu_B"'] & \phantom{1} & TB \ar[dr, shorten 
				<= -0.2cm, 
				shorten >= -0.2cm, "b"] & 
				\\
				T^2A \ar[ur, shorten <= -0.2cm, 
				shorten >= -0.2cm, "T^2F"] \ar[dr, shorten <= -0.2cm, 
				shorten >= -0.2cm, "\mu_{A}"'] & & TB \ar[rr, "b"] & 
				\phantom{1} \ar[dl, phantom, "\Downarrow\overline{F}"] & B
				\\
				& TA \ar[ur, shorten <= -0.2cm, 
				shorten >= -0.2cm, "TF"] \ar[rr, "a"'] & \phantom{1} & A 
				\ar[ur, shorten <= -0.2cm, 
				shorten >= -0.2cm,
				"F"']&
			\end{tikzcd}
			=
			\begin{tikzcd}[row sep = small, column sep = 0.2em]
				& T^2B \ar[rr, "Tb"] & \phantom{1} \ar[dl, phantom, "\Downarrow 
				T\overline{F}"] & TB \ar[dr, shorten <= -0.2cm, 
				shorten >= -0.2cm, "b"] \ar[dd, phantom, 
				"\Downarrow\overline{F}"] & 
				\\
				T^2A \ar[ur, shorten <= -0.2cm, shorten >= -0.2cm, "T^2F"] 
				\ar[rr, "Ta"'] \ar[dr, shorten <= -0.2cm, shorten >= 
				-0.2cm, 
				"\mu_{TA}"']& \phantom{1} 
				& TA 
				\ar[ur, shorten <= -0.2cm, shorten >= -0.2cm, "TF"] \ar[dr, 
				shorten <= -0.2cm, shorten >= -0.2cm, "a"'] & & B
				\\
				& TA \ar[rr, "a"'] & \phantom{1} & A \ar[ur, shorten <= -0.2cm, 
				shorten >= -0.2cm, "F"']&
			\end{tikzcd},
		\end{equation}
		\begin{equation}
			\label{thm:eqt:T-mor-2}
			\begin{tikzcd}[row sep = tiny, column sep = tiny]
				& B \ar[dr,  shorten <= -0.2cm, 
				shorten >= -0.2cm,  "\eta_B"'] \ar[drrr, shorten >= -0.2cm, 
				bend left, "1"]&  & 
				\phantom{1}  &
				\\
				A \ar[ur,  shorten <= -0.2cm, 
				shorten >= -0.2cm, "F"] \ar[dr,  shorten <= -0.2cm, 
				shorten >= -0.2cm, "\eta_{A}"'] &  & TB \ar[rr, "b"]& 
				\phantom{1}\ar[dl, phantom, "\Downarrow\overline{F}"] & B
				\\
				&  TA \ar[ur, shorten <= -0.2cm, 
				shorten >= -0.2cm,  "TF"] \ar[rr, "a"'] & \phantom{1} & A 
				\ar[ur, shorten <= -0.2cm, 
				shorten >= -0.2cm, "F"'] &
			\end{tikzcd}
			=
			\begin{tikzcd}[row sep = tiny, column sep = tiny]
				& B  \ar[drrr, shorten >= -0.2cm, bend left, "1"]&  & 
				\phantom{1}  &
				\\
				A \ar[ur,  shorten <= -0.2cm, 
				shorten >= -0.2cm, "F"] \ar[dr,  shorten <= -0.2cm, 
				shorten >= -0.2cm, "\eta_{A}"'] \ar[drrr, shorten >= -0.2cm, 
				bend left, "1"] & 
				\phantom{1}  &
				&  & B
				\\
				&  TA  \ar[rr, "a"'] & \phantom{1} & A 
				\ar[ur, shorten <= -0.2cm, 
				shorten >= -0.2cm, "F"'] &
			\end{tikzcd}.
		\end{equation}
		Note that the modification $\overline{F} \colon b\cdot TF \Rightarrow F 
		\cdot a$ has its component at $c \in \ob\twoC$ being a natural 
		transformation $\overline{F}_c \colon (b\cdot TF)_c \to (F \cdot a)_c$, 
		which means we have
		\begin{center}
			\begin{tikzcd}
				{\sum_{d \in \twoC} \twoC(d, c) \times A(d)} \ar[r, "a_c"] 
				\ar[d, 
				"(TF)_c"'] & A(c) \ar[d, "F(c)"]
				\\
				{\sum_{d \in \twoC} \twoC(d, c) \times B(d)} \ar[r, "b_c"'] 
				\ar[Rightarrow, to=1-2, shorten=8mm, "\overline{F}_c"']& B(c)
			\end{tikzcd},
		\end{center}
		which is equivalent to the following natural transformation 
		$\overline{F}_{c, d}$ for a chosen $d \in \ob\twoC$
		\begin{center}
			\begin{tikzcd}
				{\twoC(d, c) \times A(d)} \ar[r, "a_{c, d}"] \ar[d, 
				"{\twoC(d, c) \times F(d)}"'] & A(c) \ar[d, "F(c)"]
				\\
				{\twoC(d, c) \times B(d)} \ar[r, "b_{c, d}"'] \ar[Rightarrow, 
				to=1-2, 
				shorten=6.25mm,  "\overline{F}_{c, d}"']& B(c)
			\end{tikzcd}.
		\end{center}
		Let $(f, x)$ be an object of $\twoC(d, c) \times A(d)$. The above  diagram then gives
		\begin{align*}
			B(f)(F(d)(x)) \to F(c)(A(f)(x)).
		\end{align*}
		So the component ${\overline{F}_{c, d}}_{(f, x \in A(d))} \colon b_{c, 
			d} 
		\cdot 
		(\twoC(d, c) \times F(d)) (f, x) \to F(c) \cdot a_{c, d} (f, x)$ 
		amounts to 
		the component $B(f)(F(d)(x)) \to F(c)(A(f)(x))$ of a natural 
		transformation 
		\begin{center}
			\begin{tikzcd}
				A(d) \ar[r, "F(d)"] \ar[d, "A(f)"'] & B(d) \ar[d, "B(f)"]
				\\
				A(c) \ar[r, "F(c)"'] \ar[Leftarrow, to=1-2, 
				shorten=4mm, "{{{\overline{F}}_{c, 
							d, f}}}"'] & 
				B(c)
			\end{tikzcd},
		\end{center}
		and this is precisely the data of a lax natural transformation $F 
		\colon A 
		\to B$.
		
		From \eqtref{Equation}{thm:eqt:T-mor-2}, we obtain $\overline{F}_{c, c, 
			1_c} = 1$ as $(a \cdot \eta_A)_c = A(1_c)$. Now, for an object $(f 
		\colon 
		d\to 
		c, g\colon e \to d, y \in A(e))$ of $\twoC(d, c) \times \twoC(e, d) 
		\times 
		A(e)$, we have
		\begin{align*}
			{(\overline{F} \cdot \mu_A)_{c, d, e}}_{(f, g, y)} 	= 
			{\overline{F}_{c, e, fg}}_{(y)}, \quad
			{((\overline{F} \cdot Ta) \circ (b \cdot T\overline{F}))_{c, d, 
					e}}_{(f, g, y)} = (\overline{F}_{c, d, f} \circ A(g))_y 
			\circ (B(f) \circ 
			\overline{F}_{d, e, g})_y.
		\end{align*}
		By \eqtref{Equation}{thm:eqt:T-mor-1}, we obtain $\overline{F}_{c, e, 
			fg} 
		= (\overline{F}_{c, d, f} \circ A(g)) \circ (B(f) \circ 
		\overline{F}_{d, e, 
			g})$. Altogether, we recover the lax unity and the lax naturality 
		of $F$.
		
		Hence, a lax $T$-morphism $(F, \overline{F})$ corresponds to a lax 
		natural 
		transformation $F \colon A \to B$. If in addition $(F, \overline{F})$ 
		is  
		strict, then $\overline{F} = 1$ and hence it corresponds to a 
		$2$-natural 
		transformation $F \colon A \to B$.
	\end{proof}
	
	%
	%
	%

	\section{Marked limits and their equivalence to 
		\texorpdfstring{$\Cat$}{Cat}-weighted limits}
	\label{sec:marked}
	
	\subsection{Marked limits}
	
	\begin{defi}
		\label{def:marked_cat}
		Let $\twoA$ be a $2$-category. Let $\Sigma$ be a class of morphisms in 
		$\twoA$, which contains all the 
		identities and is closed under composition. The pair $(\twoA, \Sigma)$ 
		is called a \emph{marked $2$-category}.
	\end{defi}

	\begin{defi}[{\cite[I,2, p.14, (i)]{book:Gray:1974}}]
		\label{def:sigmatrans}
		Let $(\twoA, \Sigma)$ be a marked $2$-category. Let $F, G \colon 
		\twoA \rightrightarrows 
		\twoB$ be $2$-functors. A \emph{marked-lax natural transformation} 
		$\alpha \colon F \to G$ between $F$ and $G$ is a lax natural 
		transformation $\alpha \colon F \to G$ such that for any $f \in 
		\Sigma$, the $2$-component $\alpha_f = 1$. 
	\end{defi}
	
	\begin{rk}
		It is important to point out that this notion is more general than the 
		lax normal natural transformation considered in \longcite{Mesiti:2023}, 
		even though any $2$-category $\twoA$ can be 
		recovered by considering the Grothendieck construction of $\triangle(\mathbf{1}) 
		\colon \twoA \to \Cat$. We allow 
		\emph{different} classes $\Sigma$ of morphisms in $\twoA$, as opposed 
		to fixing the specific class $\Sigma = \{(f, 
		1)\}$ of morphisms in $\int^\op \triangle(\mathbf{1})$. 
	\end{rk}
	
	\begin{nota}
		We denote by $A \nrightarrow B$ a morphism from $A$ to $B$ in $\Sigma$.
	\end{nota}
	
	\begin{rk}
		Similarly, we can talk about \emph{marked-pseudo} or 
		\emph{marked-colax} 
		natural transformations.
	\end{rk}
	
	\begin{nota}
		We denote the $2$-category of $2$-functors $\twoA \to \twoB$, 
		marked-lax natural 
		transformations, 
		and modifications by $[\twoA, \twoB]_{l, \Sigma}$. 
	\end{nota}
	
	\begin{nota}
		We denote by 
		$\Sigma(a, b)$ the full subcategory of $\twoA(a, b)$ consisting of 
		morphisms in $\Sigma$
		that have fixed source $a$ and fixed target $b$.
	\end{nota}

	\begin{defi}[{\cite[I,7, p.23, (iii)]{book:Gray:1974}, \cite[Definition 
			2.4]{Szyld:2019}}]
		Let $(\twoA, \Sigma)$ be a small marked $2$-category, i.e., $\ob\twoA$ 
		is small, and let $\twoB$ be a $2$-category. The \emph{marked-lax 
			limit} 
		$\sigmalim{l}{F}$ of a $2$-functor $F \colon \twoA \to 
		\twoB$ is 
		characterised by an isomorphism
		\begin{equation}
			\label{def:sigmalim}
			\twoB(B, \sigmalim{l}{F}) \cong [\twoA, \twoB]_{l, 
				\Sigma}(\triangle(B), 
			F)
		\end{equation}
		in $\Cat$, which is natural in $B \in \ob\twoB$.
	\end{defi}
	
	\begin{rk}
		We can talk about marked-colax or marked-pseudo limits, by replacing 
		the marked-lax natural transformations with marked-colax or 
		marked-pseudo natural transformations, respectively.
	\end{rk}
	
	\subsection{Equivalence to \texorpdfstring{$\Cat$}{Cat}-weighted limits}
	\label{subsec:2-equiv}
	
	We now begin to show that a marked limit can be equivalently expressed as a 
	$\Cat$-weighted limit, which has first been shown in 
	\cite[Theorem 14]{Street:1976}, and we are going to provide a more detailed 
	proof 
	of the result.
	
	To achieve our goal, we introduce the concept of marked codescent objects 
	of 
	marked coherence data, which is a modified version of lax codescent objects 
	of 
	strict coherence data studied in \longcite{Lack:2002}.
	
	Let $(A, a)$ and $(B, b)$ be strict $T$-algebras for a $2$-monad $T$. In 
	\longcite{Lack:2002}, Lack has shown that a lax $T$-morphism is equivalent 
	to a 
	lax codescent cocone of the strict coherence data
	\begin{center}
		\begin{tikzcd}[column sep=large]
			T^3A \ar[r, shift left=2ex, "\mu_{TA}"] \ar[r, shift 
			right=2ex, "T^2a"'] \ar[r, "\scriptstyle{T\mu_A}" description] 
			& T^2A \ar[r, shift left=2ex, "\mu_A"] \ar[r, shift 
			right=2ex, "Ta"' 
			]  &   
			TA   \ar[l,  "\scriptstyle{T\eta_A}" 
			description] 
		\end{tikzcd}.
	\end{center}
	
	Now, let $\twoC$ be an arbitrary $2$-category, and $E \colon \ob\twoC \to 
	\twoC$ be 
	the canonical identity-on-object $2$-functor. Define $T := E^* \Lan{E} 
	\colon [\ob\twoC, \Cat] \to 
	[\ob\twoC, \Cat]$. Then $T$ is a $2$-monad on $[\ob\twoC, \Cat]$, and from 
	\longref{Theorem}{thm:presheaves_cats_are_T-Alg}, a lax 
	$T$-morphism in this case is a lax natural transformation in 
	$[\twoC,\Cat]$. We now investigate 
	how the corresponding lax codescent cocone of 
	the strict coherence data given above looks like concretely when $T 
	= E^* \Lan{E}$.
	
	The lax codescent cocone $(B, G := b \cdot TF \colon TA \to B, \overline{G} 
	:= b 
	\cdot T\overline{F} \colon G \cdot 
	\mu_A \Rightarrow G \cdot Ta)$
	\begin{center}
		\begin{tikzcd}[row sep = tiny, column sep = tiny]
			& TA \ar[dr, shorten <= -0.2cm, shorten >= -0.2cm, "G"] 
			\ar[dd, phantom, "\Downarrow \overline{G}"]&
			\\
			T^2A \ar[ur, shorten <= -0.2cm, shorten >= -0.2cm, 
			"\mu_A"] \ar[dr, shorten <= -0.2cm, shorten >= -0.2cm, 
			"Ta"'] & & B
			\\
			& TA \ar[ur, shorten <= -0.2cm, shorten >= -0.2cm, "G"'] &
		\end{tikzcd}
	\end{center}
	has its component at $c \in \ob\twoC$ given by
	\begin{center}
		\begin{tikzcd}[row sep = tiny, column sep = tiny]
			& {\sum_{d \in \twoC} \twoC(d, c) \times A(d)} \ar[dr, shorten <= 
			-0.2cm, shorten >= -0.2cm, "G(c)"] 
			&
			\\
			{\sum_{d_1 \in \twoC} \sum_{d \in \twoC} \twoC(d_1, c) \times 
				\twoC(d, d_1) \times A(d)} \ar[ur, shorten <= -0.2cm, shorten 
			>= 
			-0.2cm, 
			"{\mu_A}_c"] \ar[dr, shorten <= -0.2cm, shorten >= -0.2cm, 
			"Ta_c"'] & & B(c)
			\\
			& {\sum_{d \in \twoC} \twoC(d, c) \times A(d)}  \ar[ur, shorten <= 
			-0.2cm, shorten >= -0.2cm, "G(c)"'] \ar[Rightarrow, shorten=3mm, 
			from=1-2, 
			"\overline{G}_c"] &
		\end{tikzcd}
	\end{center}
	in our case $T = E^* \Lan{E}$. The component of $\overline{G}_c$ at $(h 
	\colon d_1 \to c, k \colon d \to d_1, x \in A(d))$ is given by
	\begin{align*}
		{\overline{G}_{c, d_1, d}}_{(h, k, x)} = {(B(h) \cdot {F}_k)_{c, d_1, 
				d}}_{(h, k, x)} \colon 
		B(hk)(F_d(x)) \to 
		B(h)(F_{d_1}(A(k)(x))).
	\end{align*}
	So the component of $\overline{G}_c$ at $(h, k, x)$  corresponds to $B(h) 
	\cdot F_k$ in 
	the following 
	diagram of $2$-cells:
	\begin{equation}
		\begin{tikzcd}
			\label{diag:G_bar}
			{A(d)} \ar[r,  "F_d"] \ar[d,  "A(k)"']& 
			{B(d)} \ar[d, 
			"B(k)"] 
			\\
			{A(d_1)} \ar[r, "F_{d_1}"'] \ar[d, 
			"A(h)"'] 			\ar[Leftarrow, to=1-2, 
			shorten=4.5mm, "{F}_k"'] & 
			{B(d_1)} \ar[d, 
			"B(h)"] 
			\\
			A(c) \ar[r, "F_c"'] \ar[Leftarrow, to=2-2, 
			shorten=4.5mm, "{F}_h"'] & {B(c)}
		\end{tikzcd}.
	\end{equation}
	
	Now let $F \colon A \to B$ be a marked-lax 
	natural transformation, then for any morphism $k$ in $\Sigma \subseteq 
	\twoC$, $F_k = 1$. So from 
	\longref{Diagram}{diag:G_bar}, we then have ${\overline{G}_c}_{(h, k, x)} = 
	B(h) \cdot F_k = 1$. We may express this situation as follows:
	\begin{center}
		\begin{tikzcd}[scale cd = 0.9 ,row sep = tiny, column sep = small]
			& & {\sum_{d \in \twoC} \twoC(d, c) \times A(d)} \ar[dr, "G(c)"] 
			&
			\\
			{\sum_{d_1 \in \twoC} \sum_{d \in \twoC} \twoC(d_1, c) \times 
				\Sigma(d, d_1) \times A(d)} \ar[r, hook, "\inc"]  & {\sum_{d_1 
					\in 
					\twoC} \sum_{d \in \twoC} \twoC(d_1, c) \times \twoC(d, 
				d_1) \times 
				A(d)} \ar[ur, "{\mu_A}_c"] \ar[dr, "Ta_c"']& & B(c)
			\\
			& & {\sum_{d \in \twoC} \twoC(d, c) \times A(d)} \ar[ur, "G(c)"'] 
			\ar[Rightarrow, shorten=2.5mm, 
			from=1-3, 
			"\overline{G}_c"]  &
		\end{tikzcd}
	\end{center}
	\begin{equation}
		\label{eqt:sigma_modified}
		= 1.
	\end{equation}
	
	Since all identities are included in $\Sigma$, there is a factorisation
	\begin{equation}
		\label{diag:sigma_factorisation}
		\begin{tikzcd}[scale cd = 0.9, row sep = normal, column sep = tiny]
			{\sum_{d \in \twoC} \twoC(d, c) \times A(d)} \ar[rr, hook] \ar[dr, 
			hook] & & {\sum_{d_1 \in 
					\twoC} \sum_{d \in \twoC} \twoC(d_1, c) \times \twoC(d, 
				d_1) \times 
				A(d)}
			\\
			&{\sum_{d_1 \in \twoC} \sum_{d \in \twoC} \twoC(d_1, c) \times 
				\Sigma(d, d_1) \times A(d)} \ar[ur, hook] &
		\end{tikzcd}.
	\end{equation}
	
	\longref{Diagram}{diag:sigma_factorisation} motivates an abstract diagram
	\begin{center}
		\begin{tikzcd}
			X_0 \ar[rr, hook] \ar[dr, hook] &  & X_1
			\\
			& X_{\sigma} \ar[ur, hook]&
		\end{tikzcd}
	\end{center}
	in $\Cat$, whereas \eqtref{Equation}{eqt:sigma_modified} motivates a 
	coherence 
	condition
	\begin{equation*}
		\begin{tikzcd}[row sep = tiny, column sep = small]
			& & X_0 \ar[dr, shorten <= -0.2cm, 
			shorten >= -0.2cm, "y"] \ar[dd, phantom, "\Downarrow\upsilon"] &
			\\
			{X_\sigma} \ar[r, "\frz"] & X_1 \ar[ur, shorten <= -0.2cm, 
			shorten >= -0.2cm, "\frs"] \ar[dr, shorten <= -0.2cm, 
			shorten >= -0.2cm, "\frt"']& & Y
			\\
			& & X_0 \ar[ur, shorten <= -0.2cm, 
			shorten >= -0.2cm, "y"'] &
		\end{tikzcd}
		=
		\begin{tikzcd}
			{X_\sigma} \ar[rr, "y_\sigma"]& & Y
		\end{tikzcd}
	\end{equation*}
	where $X_\sigma$, $\frz \colon X_\sigma \to X_1$, $y \colon X_0 \to y$ , 
	and 
	$y_\sigma \colon 
	X_\sigma \to X$ are still left to be defined.
	
	\begin{defi}
		\label{def:sigma_simplex}
		The category $\Delta_\sigma$ is generated by a set $\{[0], [1], [2], 
		[\sigma]\}$ of objects and a set $\{p, m, q, s, t, i, j, k\}$ of 
		morphisms, depicted as follows:
		\begin{center}
			\begin{tikzcd}
				{[2]} & {[1]} \ar[l, shift left=2ex, "q"] \ar[l, shift 
				right=2ex, "p"'] \ar[l, "m" description] \ar[rr, shift 
				right=2ex, "i"' 
				] \ar[dr, shorten <= 0.3cm, 
				shorten >= -0.2cm, "k"'] & & {[0]}  \ar[ll,  
				"t" description] \ar[ll, 
				shift 
				right=2ex, "s"']
				\\
				&  &{[\sigma]} \ar[ur, shorten <= -0.2cm, 
				shorten >= 0.3cm, "j"'] &
			\end{tikzcd},
		\end{center}
		with relations given by 
		\begin{equation*}
			{is} = {it} = 1, {ps} = {ms},
			{qt} = {mt}, {pt} = 
			{qs},  \text{and} 
			\hspace{0.5em} i = jk,
		\end{equation*}
		which are exactly the simplicial relations with an extra equation $i 
		= jk$. We call these the \emph{marked relations}.
	\end{defi}
	
	\begin{defi}
		\label{def:sigma_coh_data}
		A collection of \emph{marked coherence data} $\frX_\sigma$ is a 
		$2$-functor $\frX_\sigma \colon {\Delta_\sigma}^\op \to 
		\twoC$. This means $\frX_\sigma$ is a diagram
		\begin{center}
			\begin{tikzcd}
				X_2 \ar[r, shift left=2ex, "\frp"] \ar[r, shift 
				right=2ex, "\frq"'] \ar[r, "\frm" description] 
				& X_1 \ar[rr, shift left=2ex, "\frs"] \ar[rr,  "\frt"' 
				description]   &   & 
				X_0  \ar[ll, shift 
				left=2ex, "\fri" 
				] \ar[dl, shorten <= 0.3cm, 
				shorten >= -0.2cm,  "\frj"]
				\\
				& & {X_\sigma} \ar[ul, shorten <= -0.2cm, 
				shorten >= 0.3cm,  "\frz"]
			\end{tikzcd}
		\end{center}
		in $\twoC$, that satisfies the \emph{marked identities}
		\begin{equation*}
			\mathfrak{si} = \mathfrak{ti} = 1, \mathfrak{sp} = \mathfrak{sm},
			\mathfrak{tq} = \mathfrak{tm}, \mathfrak{tp} = 
			\mathfrak{sq},  \text{and} 
			\hspace{0.5em} \fri = \frj\frz.
		\end{equation*}
	\end{defi}
	
	\begin{defi}
		\label{def:sigma_codescent_cocone}
		A \emph{marked codescent cocone} of the marked coherence data 
		$\frX_\sigma$ 
		defined in 
		\longref{Definition}{def:sigma_coh_data} consists of a triple $(Y 
		\in \ob\twoC, y_\sigma \colon X_\sigma \to Y, \upsilon \colon ys 
		\Rightarrow yt)$:
		\begin{center}
			\begin{tikzcd}[row sep = tiny, column sep = tiny]
				& X_0 \ar[dr, shorten <= -0.2cm, shorten >= -0.2cm, "y := 
				y_\sigma \frj"] 
				\ar[dd, phantom, "\Downarrow \upsilon"]&
				\\
				X_1 \ar[ur, shorten <= -0.2cm, shorten >= -0.2cm, 
				"\frs"] \ar[dr, shorten <= -0.2cm, shorten >= -0.2cm, 
				"\frt"'] & & Y
				\\
				& X_0 \ar[ur, shorten <= -0.2cm, shorten >= -0.2cm, "y := 
				y_\sigma \frj"'] &
			\end{tikzcd}
		\end{center}
		where $y := y_\sigma \frj \colon X_0 \to X$, that satisfies the 
		\emph{multiplicative equation}
		\begin{equation*}
			\begin{tikzcd}[row sep = small, column sep = tiny]
				& X_1 \ar[rr, "\frs"] & & X_0 \ar[dr, shorten <= -0.2cm, 
				shorten >= -0.2cm, "y"] \ar[dd, phantom, "\Downarrow \upsilon"] 
				& 
				\\
				X_2 \ar[ur, shorten <= -0.2cm, shorten >= -0.2cm, "\frp"] 
				\ar[rr, "\frm"'] \ar[dr, shorten <= -0.2cm, shorten >= -0.2cm, 
				"\frq"']& & X_1 
				\ar[ur, shorten <= -0.2cm, shorten >= -0.2cm, "\frs"] \ar[dr, 
				shorten <= -0.2cm, shorten >= -0.2cm, "\frt"'] & & Y
				\\
				& X_1 \ar[rr, "\frt"'] & & X_0 \ar[ur, shorten <= -0.2cm, 
				shorten >= -0.2cm, "y"']&
			\end{tikzcd}
			=
			\begin{tikzcd}[row sep = small, column sep = tiny]
				& X_1 \ar[rr, "\frs"] \ar[dr, shorten <= -0.2cm, 
				shorten >= -0.2cm, "\frt"'] & \phantom{1}\ar[dr, 
				phantom,  
				"\Downarrow\upsilon"] & X_0 \ar[dr, shorten <= -0.2cm, 
				shorten >= -0.2cm, "y"] & 
				\\
				X_2 \ar[ur, shorten <= -0.2cm, 
				shorten >= -0.2cm, "\frp"] \ar[dr, shorten <= -0.2cm, 
				shorten >= -0.2cm, "\frq"'] & & X_0 \ar[rr, "y"] & 
				\phantom{1} \ar[dl, phantom, "\Downarrow\upsilon"] & Y
				\\
				& X_1 \ar[ur, shorten <= -0.2cm, 
				shorten >= -0.2cm, "\frs"] \ar[rr, "\frt"'] & \phantom{1} & X_0 
				\ar[ur, shorten <= -0.2cm, 
				shorten >= -0.2cm,
				"y"']&
			\end{tikzcd}
		\end{equation*} 
		and the \emph{marked equation}
		\begin{equation*}
			\begin{tikzcd}[row sep = tiny, column sep = small]
				& & X_0 \ar[dr, shorten <= -0.2cm, 
				shorten >= -0.2cm, "y"] \ar[dd, phantom, "\Downarrow\upsilon"] &
				\\
				X_\sigma \ar[r, "\frz"]	& X_1 \ar[ur, shorten <= -0.2cm, 
				shorten >= -0.2cm, "\frs"] \ar[dr, shorten <= -0.2cm, 
				shorten >= -0.2cm, "\frt"']& & Y
				\\
				& & X_0 \ar[ur, shorten <= -0.2cm, 
				shorten >= -0.2cm, "y"'] &
			\end{tikzcd}
			=
			\begin{tikzcd}
				X_\sigma \ar[rr, "y_\sigma"]& & Y
			\end{tikzcd}.
		\end{equation*}
	\end{defi}
	
	\begin{defi}
		\label{def:sigma_codescent_mor}
		Let $(Y_1, {y_\sigma}_1 \colon X_\sigma \to Y_1, \upsilon_1 \colon 
		{y_\sigma}_1 \frj s \Rightarrow {y_\sigma}_1 \frj t)$ and $(Y_2, 
		{y_\sigma}_2 \colon X_\sigma \to Y_2, \upsilon_2 \colon {y_{\sigma}}_2 
		\frj s \Rightarrow {y_{\sigma}}_2 \frj t)$ be two marked codescent 
		cocones.  A \emph{morphism of marked codescent cocones} from $(Y_1, 
		{y_\sigma}_1, 
		\upsilon_1)$ to 
		$(Y_2, {y_{\sigma}}_2, \upsilon_2)$ consists of a morphism $f \colon 
		Y_1 \to Y_2$ and a $2$-cell $\theta 
		\colon f{y_\sigma}_1 \frj
		\Rightarrow {y_{\sigma}}_2 \frj$ in 
		$\twoC$, such that
		\begin{equation*}
			\theta \frt \circ f\upsilon_1 = \upsilon_2 \circ \theta \frs.
		\end{equation*}
	\end{defi}
	
	\begin{rk}
		\label{rk:sigma_cocone_mor}
		In particular, if $Y_1 = Y_2$, then a morphism of lax codescent cocones 
		reduces to a $2$-cell $\theta 
		\colon {y_\sigma}_1 \frj
		\Rightarrow {y_{\sigma}}_2 \frj$ satisfying 
		\begin{equation*}
			\theta \frt \circ \upsilon_1 = \upsilon_2 \circ \theta \frs.
		\end{equation*}
	\end{rk}
	
	\begin{nota}
		We denote by $\SigmaCocone(\frX_\sigma, Y)$ the category of marked 
		codescent cocones of $\frX_\sigma$ with a fixed nadir $Y \in \ob\twoC$ 
		and 
		the morphisms of 
		marked codescent cocones as in \longref{Remark}{rk:sigma_cocone_mor}.
	\end{nota}
	
	\begin{defi}
		\label{def:marked_codescent_obj}
		A \emph{marked codescent object} $Z$ of the marked coherence data in 
		\longref{Definition}{def:sigma_coh_data} is the universal marked 
		codescent 
		cocone, i.e., it is characterised by
		\begin{equation*}
			\SigmaCocone(\frX_\sigma, Y) \cong \twoC(Z, Y),
		\end{equation*}
		natural in $Y$.
	\end{defi}
	
	The following proposition is crucial in establishing our main result in 
	this subsection. It tells us that a marked codescent cocone $(Y, y_\sigma, 
	\upsilon)$ of a marked coherence data 
	$\frX_\sigma$ corresponds bijectively to a $2$-natural transformation from 
	the \emph{marked weight} $W$ to the 
	$2$-presheaf $\twoC(\frX_\sigma -, Y)$; in other words, a marked codescent 
	object is described equivalently as a weighted colimit. This is the main 
	reason that we can 
	'strictify' a marked-lax natural transformation into a $2$-natural 
	transformation.
	
	\begin{nota}
		We view a poset $\{0 < 1 < 2 < \cdots < n\}$ as a category, which is 
		denoted as $\mathbf{n} :=  \{0 \to 1 \to 2 \to \cdots \to n\}$.
	\end{nota}
	
	\begin{pro}
		\label{pro:sigma_cocone_bijection}
		Marked codescent objects can be described by weighted colimits.
		
		More precisely, there exists a weight $W \colon \Delta_\sigma 
		\to 
		\Cat$, which we call the \emph{marked weight}, such that
		\begin{equation*}
			[\Delta_\sigma, \Cat](W, \twoC(\frX_\sigma -, Y)) \cong 
			\SigmaCocone(\frX_\sigma, Y).
		\end{equation*}
	\end{pro}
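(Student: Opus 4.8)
The plan is to exhibit the marked weight $W \colon \Delta_\sigma \to \Cat$ explicitly on the four objects and eight generating morphisms of \longref{Definition}{def:sigma_simplex}, and then to unwind a natural transformation $\alpha \colon W \Rightarrow \twoC(\frX_\sigma -, Y)$ component by component, matching each piece of data and each naturality square against \longref{Definition}{def:sigma_codescent_cocone}. I set $W([0]) = \mathbf{1}$, $W([\sigma]) = \mathbf{1}$, $W([1]) = \mathbf{2}$ and $W([2]) = \mathbf{3}$. On generators I declare $W(s), W(t) \colon \mathbf{1} \to \mathbf{2}$ to pick out the objects $0$ and $1$ respectively; $W(i), W(k) \colon \mathbf{2} \to \mathbf{1}$ and $W(j) \colon \mathbf{1} \to \mathbf{1}$ to be the unique functors; and $W(p), W(q), W(m) \colon \mathbf{2} \to \mathbf{3}$ to select the arrows $0 \to 1$, $1 \to 2$ and the composite $0 \to 2$ of $\mathbf{3}$. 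First I would check that these assignments respect the marked relations; this is a short finite verification (for instance $ps = ms$ holds because $W(p)$ and $W(m)$ agree on the object $0$, and $pt = qs$ holds because both composites land on $1 \in \mathbf{3}$), so $W$ is a well-defined functor.

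Next I would read off the correspondence on objects. Since $W([\sigma]) = \mathbf{1}$, the component $\alpha_{[\sigma]}$ is exactly a $1$-cell $y_\sigma \colon X_\sigma \to Y$, and since $W([0]) = \mathbf{1}$, the component $\alpha_{[0]}$ is a $1$-cell $y \colon X_0 \to Y$; naturality with respect to $j$ then forces $y = y_\sigma \frj$, recovering the derived leg of the cocone. As $W([1]) = \mathbf{2}$, the component $\alpha_{[1]}$ is a single morphism of $\twoC(X_1, Y)$, i.e.\ a $2$-cell $\upsilon$, and naturality with respect to $s$ and $t$ pins its source and target to $y\frs$ and $y\frt$, so $\upsilon \colon y\frs \Rightarrow y\frt$. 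The naturality square for $k$ says precisely that $\upsilon$ whiskered by $\frz$ is the identity on $y_\sigma$, with matching boundaries $y\frs\frz = y_\sigma = y\frt\frz$, which is the marked equation; the square for $i$ then gives $\upsilon\fri = \id_y$, which is anyway a consequence of the marked equation since $\fri = \frz\frj$, so it imposes nothing new. Finally $\alpha_{[2]} \colon \mathbf{3} \to \twoC(X_2, Y)$ is a pair of composable $2$-cells, and naturality with respect to $p, q, m$ identifies these with $\upsilon\frp$, $\upsilon\frq$, $\upsilon\frm$; functoriality of $\alpha_{[2]}$ on the composite $0 \to 2 = (1 \to 2)\circ(0 \to 1)$ then yields $\upsilon\frm = \upsilon\frq \circ \upsilon\frp$, the multiplicative equation. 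Here the marked identities $\frs\frp = \frs\frm$, $\frt\frp = \frs\frq$, $\frt\frq = \frt\frm$ are exactly what guarantees the three $2$-cells have matching boundaries and are composable in $\mathbf{3}$. Conversely, any marked codescent cocone determines an $\alpha$ by these same assignments, and the equations checked above show it is natural; this yields the bijection on objects.

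For the $2$-dimensional part I would describe modifications. A modification $\Xi \colon \alpha \Rrightarrow \alpha'$ has a component $\Xi_{[\sigma]}$ which, since $W([\sigma]) = \mathbf{1}$, is a single $2$-cell $\theta \colon y_\sigma \Rightarrow y_\sigma'$; the modification axioms for $j$, $s$ and $t$ force the remaining components to be the whiskerings $\theta\frj$, $\theta\frj\frs$ and $\theta\frj\frt$, so that $\theta$ carries all the information. The one surviving constraint is the naturality square of $\Xi_{[1]}$ over the generating arrow of $\mathbf{2}$, which reads $\upsilon' \circ \theta\frj\frs = \theta\frj\frt \circ \upsilon$; this is exactly the compatibility condition of \longref{Remark}{rk:sigma_cocone_mor} defining a morphism of marked codescent cocones. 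Checking that these assignments are mutually inverse and functorial in both directions then upgrades the object-level bijection to the claimed isomorphism of categories $[\Delta_\sigma, \Cat](W, \twoC(\frX_\sigma -, Y)) \cong \SigmaCocone(\frX_\sigma, Y)$.

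The step I expect to be the main obstacle is the treatment of the object $[2]$ and the multiplicative equation: one must choose $W([2])$ so that the whiskered $2$-cells $\upsilon\frp, \upsilon\frq, \upsilon\frm$ are forced to satisfy precisely the cocycle relation and no spurious extra identities, and one must confirm that the marked identities supply exactly the boundary agreements making $\mathbf{3}$ the correct shape. Verifying that no unwanted conditions arise from the composite morphisms of $\Delta_\sigma$ into $[2]$ (beyond those already generated by $p, q, m$ and the relations) is the bookkeeping that demands the most care.
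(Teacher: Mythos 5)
Your proposal is correct and is essentially the paper's own proof: you construct the same marked weight (beware only that the paper's convention $\mathbf{n} := \{0 \to 1 \to \cdots \to n\}$ has $n+1$ objects, so your $\mathbf{1}, \mathbf{2}, \mathbf{3}$ are the paper's $\mathbf{0}, \mathbf{1}, \mathbf{2}$) and then identify $2$-natural transformations $W \to \twoC(\frX_\sigma -, Y)$ with marked codescent cocones and modifications with cocone morphisms. The paper compresses this identification into the phrase ``a standard checking,'' so your component-by-component verification --- including the correct observations that naturality at $j$ forces $y = y_\sigma \frj$, that naturality at $k$ is the marked equation, and that the unital condition $\upsilon\fri = 1$ is redundant since $\fri = \frz\frj$ --- simply supplies the details the paper omits.
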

	
	\begin{proof}
		\label{def:sigma_weight}
		We define the \emph{marked weight} $W$ as follows. Let $W 
		\colon \Delta_\sigma 
		\to 
		\Cat$ be a $2$-functor, which acts as the usual embedding of 
		the $2$-truncated simplex 
		category $\Delta_2$ into $\Cat$ on the objects 
		$\{[0], [1], [2]\}$ and the morphisms $\{p, m, q, s, t, i\}$, 
		and in 
		addition,   
		$W([\sigma]) = \mathbf{0}$, $W(k)  \colon \mathbf{1} \to \mathbf{0}$ is 
		the 
		constant 
		map at 
		$0$, and $W(j) \colon \mathbf{0} \to \mathbf{0}$ is the identity on 
		$\mathbf{0}$. 
		This means 
		$W$ is a diagram
		\begin{center}
			\begin{tikzcd}[column sep = large]
				{\mathbf{2}} & {\mathbf{1}} \ar[l, shift left=2ex, "Wq"] 
				\ar[l, shift 
				right=2ex, "Wp"'] \ar[l, "Wm" description] 
				\ar[rr, shift 
				right=2ex, "Wi"' 
				] \ar[dr, start anchor={[xshift=-0.01cm, yshift=-0.25cm]}, 
				shorten >= 0cm, "Wk"'] & & {\mathbf{0}}  \ar[ll,  
				"Wt" description] \ar[ll, 
				shift 
				right=2ex, "Ws"']
				\\
				&  &{\mathbf{0}} \ar[ur, shorten <= 0cm, 
				end 
				anchor={[xshift=0.01cm, yshift=-0.25cm]}, "Wj"'] &
			\end{tikzcd}
		\end{center}
		in $\Cat$, satisfying the \emph{marked relations}.
		
		A standard 
		checking verifies that any $2$-natural transformation $\gamma \colon W 
		\to \twoC(\frX_\sigma -, Y)$ corresponds to a marked codescent cocone, 
		and 
		a modification $\gamma_1 \to \gamma_2$ is 
		equivalently a morphism from $(Y, y_1, \upsilon_1)$ to $(Y, y_2, 
		\upsilon_2)$.
	\end{proof}
	
	Now, let us consider the weighted colimit $W * \frX_\sigma =: X^\ddagger$.

	\begin{lemma}
		\label{lem:sigma_codescent_cocone}
		There exists a morphism $x^\ddagger \colon X_0 \to X^\ddagger$ and a 
		$2$-cell $\chi^\ddagger \colon x^\ddagger \frs \Rightarrow x^\ddagger 
		\frt$, such that $(X^\ddagger, x^\ddagger, \chi^\ddagger)$ is a marked 
		codescent 
		cocone of the marked coherence data $\frX_\sigma$.
	\end{lemma}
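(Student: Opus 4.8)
The plan is to read the cocone structure off $X^\ddagger$ directly from the universal property of the weighted colimit, combined with \longref{Proposition}{pro:sigma_cocone_bijection}, by a Yoneda-style argument; no explicit manipulation of the colimit is needed.

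First I would recall that, by the defining isomorphism of the weighted colimit $X^\ddagger = W * \frX_\sigma$, there is an isomorphism
\[
	\twoC(X^\ddagger, Y) \cong [\Delta_\sigma, \Cat](W, \twoC(\frX_\sigma -, Y))
\]
in $\Cat$, natural in $Y \in \ob\twoC$. Composing this with the isomorphism supplied by \longref{Proposition}{pro:sigma_cocone_bijection} yields a natural isomorphism
\[
	\twoC(X^\ddagger, Y) \cong \SigmaCocone(\frX_\sigma, Y),
\]
again natural in $Y$. In particular this exhibits $X^\ddagger$ as representing the functor $\SigmaCocone(\frX_\sigma, -)$, so that $X^\ddagger$ is a marked codescent object in the sense of \longref{Definition}{def:marked_codescent_obj}.

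Next, to extract the cocone itself, I would instantiate the isomorphism at $Y = X^\ddagger$ and transport the identity $1_{X^\ddagger}$ across it. This produces a distinguished (universal) marked codescent cocone $(X^\ddagger, y_\sigma^\ddagger \colon X_\sigma \to X^\ddagger, \upsilon^\ddagger)$. Setting $x^\ddagger := y_\sigma^\ddagger \frj \colon X_0 \to X^\ddagger$ and $\chi^\ddagger := \upsilon^\ddagger$ then gives exactly the data asserted, with $\chi^\ddagger \colon x^\ddagger \frs \Rightarrow x^\ddagger \frt$. Since this triple is by construction an object of $\SigmaCocone(\frX_\sigma, X^\ddagger)$, it automatically satisfies the multiplicative and marked equations of \longref{Definition}{def:sigma_codescent_cocone}, so $(X^\ddagger, x^\ddagger, \chi^\ddagger)$ is indeed a marked codescent cocone.

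There is no serious obstacle here: the argument is a routine application of representability and the Yoneda lemma, the real content having already been absorbed into \longref{Proposition}{pro:sigma_cocone_bijection}. The only points warranting care are confirming that the isomorphism of that proposition is natural in $Y$ — so that it may legitimately be composed with the colimit isomorphism and fed into the Yoneda argument — and, should an explicit description of $x^\ddagger$ and $\chi^\ddagger$ be wanted in later constructions, tracing $1_{X^\ddagger}$ through both isomorphisms to recover the legs concretely.
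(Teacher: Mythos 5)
Your proposal is correct and follows essentially the same route as the paper: the paper's proof likewise invokes the defining isomorphism $\twoC(X^\ddagger, Y) \cong [\Delta_\sigma, \Cat](W, \twoC(\frX_\sigma-, Y))$ of the weighted colimit and then cites \longref{Proposition}{pro:sigma_cocone_bijection}, with the evaluation at $Y = X^\ddagger$ and transport of $1_{X^\ddagger}$ left implicit. Your write-up merely makes that Yoneda-style extraction of the cocone explicit (and anticipates the universality statement, which the paper defers to \longref{Proposition}{pro:sigma_codescent_obj}).
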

	
	\begin{proof}
		There is an isomorphism of categories
		\begin{equation*}
			\twoC(X^\ddagger, Y) \cong [\Delta_\sigma, \Cat](W, 
			\twoC(\frX_\sigma-, Y))
		\end{equation*}
		natural in $Y \in \ob\twoC$. The result follows directly from 
		\longref{Proposition}{pro:sigma_cocone_bijection}.
	\end{proof}
	
	\begin{pro}
		\label{pro:sigma_codescent_obj}
		The marked codescent cocone $(X^\ddagger, x^\ddagger, \chi^\ddagger)$ 
		constructed in \longref{Lemma}{lem:sigma_codescent_cocone} is the 
		marked codescent object of the marked coherence data $\frX_\sigma$, 
		i.e., 
		it is 
		the 
		universal marked codescent cocone.
	\end{pro}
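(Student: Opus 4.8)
The plan is to paste together the two representability results already in hand and then read off, via the Yoneda lemma, that $X^\ddagger$ represents the functor $\SigmaCocone(\frX_\sigma, -)\colon \twoC \to \Cat$ with universal element exactly the cocone $(x^\ddagger, \chi^\ddagger)$ produced in \longref{Lemma}{lem:sigma_codescent_cocone}.

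First I would compose the defining isomorphism of the weighted colimit $X^\ddagger = W * \frX_\sigma$,
\[
  \twoC(X^\ddagger, Y) \cong [\Delta_\sigma, \Cat]\bigl(W, \twoC(\frX_\sigma -, Y)\bigr),
\]
with the isomorphism of \longref{Proposition}{pro:sigma_cocone_bijection}, so as to obtain an isomorphism of categories
\[
  \Theta_Y \colon \twoC(X^\ddagger, Y) \xrightarrow{\ \cong\ } \SigmaCocone(\frX_\sigma, Y).
\]
Both constituent isomorphisms are natural in $Y \in \ob\twoC$ — the first by the very definition of a weighted colimit, the second by the standard verification underlying \longref{Proposition}{pro:sigma_cocone_bijection} — so $\Theta$ is natural in $Y$ as well.

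Next I would pin down the universal element. By the construction in \longref{Lemma}{lem:sigma_codescent_cocone}, the cocone $(X^\ddagger, x^\ddagger, \chi^\ddagger)$ is precisely the value $\Theta_{X^\ddagger}(1_{X^\ddagger})$ of the identity. For an arbitrary $f \colon X^\ddagger \to Y$, naturality of $\Theta$ at $f$ then yields
\[
  \Theta_Y(f) = \Theta_Y(f \circ 1_{X^\ddagger}) = f \cdot \Theta_{X^\ddagger}(1_{X^\ddagger}) = (Y, f x^\ddagger, f \chi^\ddagger),
\]
where $f \cdot (-)$ is the post-composition action of $\SigmaCocone(\frX_\sigma, -)$ on morphisms, whiskering the $1$-cell $x^\ddagger$ and the $2$-cell $\chi^\ddagger$ with $f$. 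Hence $\Theta_Y$ is exactly the comparison functor sending $f$ to the cocone obtained by post-composing $(x^\ddagger, \chi^\ddagger)$ with $f$; since this functor is an isomorphism natural in $Y$, the triple $(X^\ddagger, x^\ddagger, \chi^\ddagger)$ satisfies the universal property of \longref{Definition}{def:marked_codescent_obj} and is therefore the marked codescent object.

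The only genuine work — and the step I expect to be the main obstacle — is checking that the naturality of $\Theta$ really does correspond to post-composition of marked codescent cocones, i.e., that the action of $\SigmaCocone(\frX_\sigma, -)$ on a morphism $g \colon Y \to Y'$ agrees with the transport of a $2$-natural transformation $W \to \twoC(\frX_\sigma -, Y)$ along $\twoC(\frX_\sigma -, g)$ under both constituent isomorphisms. Once this compatibility is confirmed — which is a matter of unwinding the correspondence in the proof of \longref{Proposition}{pro:sigma_cocone_bijection} — the conclusion is an immediate application of the $\Cat$-enriched Yoneda lemma.
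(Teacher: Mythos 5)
Your proposal is correct and takes essentially the same route as the paper's own proof: both compose the isomorphism of \longref{Proposition}{pro:sigma_cocone_bijection} with the defining universal property of the weighted colimit $W * \frX_\sigma$ to identify $\twoC(X^\ddagger, Y)$ with $\SigmaCocone(\frX_\sigma, Y)$ naturally in $Y$. The only difference is presentational: you spell out the Yoneda-style check that this isomorphism is implemented by post-composition with the cocone $(x^\ddagger, \chi^\ddagger)$, a point the paper leaves implicit.
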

	
	\begin{proof}
		Suppose that $(B, G \colon X_0 \to B, \overline{G} \colon G\frs \to 
		G\frt)$ 
		is a marked codescent cocone of $\frX_\sigma$.
		
		In the proof of \longref{Lemma}{lem:sigma_codescent_cocone}, we see 
		that a marked codescent cocone of $\frX_\sigma$ is equivalent to a 
		$2$-natural transformation $\beta \colon W \to \twoC(\frX_\sigma -, B)$ 
		with $\beta_{[0]}(0) = G$, $\beta_{[1]}(\iota) = \overline{G}$, and the 
		naturality amounts to the multiplicative and marked equations.
		
		A standard argument shows that a morphism between marked codescent 
		cocones 
		$(B, G_1, 
		\overline{G_1})$ and 
		$(B, G_2, \overline{G_2})$ is equivalent to a modification $\beta_1 \to 
		\beta_2$. And the universal property of the marked codescent cocone 
		$(X^\ddagger, 
		x^\ddagger, \chi^\ddagger)$ follows from that of the weighted colimit. 
	\end{proof}
	
	Recall that we have the $2$-monad $T = E^* 
	\Lan{E} 
	\colon [\ob\twoC, \Cat] \to 
	[\ob\twoC, \Cat]$, and we know that a strict $T$-algebra $(A, a \colon TA 
	\to A)$ is equivalent to a $2$-functor $A \colon \twoC \to \Cat$.
	
	Let $\frA_\sigma \colon {\Delta_\sigma}^\op \to [\twoC, \Cat]$ be a 
	collection of marked coherence data in $[\twoC, \Cat]$ as follows:
	\begin{center}
		\begin{tikzcd}
			T^3A \ar[r, shift left=2ex, "\mu_{TA}"] \ar[r, shift 
			right=2ex, "T^2a"'] \ar[r, "\scriptstyle{T\mu_A}" description] 
			& T^2A \ar[rr, shift left=2ex, "\mu_A"] \ar[rr,  "Ta"' 
			description]   &   & 
			TA  \ar[ll, shift 
			left=2ex, "T\eta_A" 
			] \ar[dl, shorten <= 0.3cm, 
			shorten >= -0.2cm,  "T\eta_A"]
			\\
			& & {A_\sigma} \ar[ul, shorten <= -0.2cm, 
			shorten >= 0.3cm, hook',  "\iota"]
		\end{tikzcd},
	\end{center}
	where $A_\sigma := {\sum_{d_1 \in \twoC} \sum_{d \in \twoC} \twoC(d_1, -) 
		\times \Sigma(d, d_1) \times A(d)}$.
	
	Then, a marked codescent cocone of $\frA_\sigma$ consists of a morphism 
	$G_\sigma \colon A_\sigma \to B$, where $B \colon \twoC \to \Cat$ is a 
	$2$-functor, and a $2$-cell
	\begin{center}
		\begin{tikzcd}[row sep = tiny, column sep = tiny]
			& TA \ar[dr, shorten <= -0.2cm, shorten >= -0.2cm, "G"] 
			\ar[dd, phantom, "\Downarrow \overline{G}"]&
			\\
			T^2A \ar[ur, shorten <= -0.2cm, shorten >= -0.2cm, 
			"\mu_A"] \ar[dr, shorten <= -0.2cm, shorten >= -0.2cm, 
			"Ta"'] & & B
			\\
			& TA \ar[ur, shorten <= -0.2cm, shorten >= -0.2cm, "G"'] &
		\end{tikzcd}
	\end{center}
	in $[\twoC, \Cat]$, where $G := G_\sigma T\eta_A$, satisfying the 
	multiplicative equation
	\begin{equation*}
		\begin{tikzcd}[row sep = small, column sep = tiny]
			& T^2A \ar[rr, "\mu_A"] & & TA \ar[dr, shorten <= -0.2cm, 
			shorten >= -0.2cm, "G"] \ar[dd, phantom, "\Downarrow \overline{G}"] 
			& 
			\\
			T^3A \ar[ur, shorten <= -0.2cm, shorten >= -0.2cm, "\mu_{TA}"] 
			\ar[rr, "T\mu_A"'] \ar[dr, shorten <= -0.2cm, shorten >= -0.2cm, 
			"T^2a"']& & T^2A
			\ar[ur, shorten <= -0.2cm, shorten >= -0.2cm, "\mu_A"] \ar[dr, 
			shorten <= -0.2cm, shorten >= -0.2cm, "Ta"'] & & B
			\\
			& T^2A \ar[rr, "Ta"'] & & TA \ar[ur, shorten <= -0.2cm, 
			shorten >= -0.2cm, "G"']&
		\end{tikzcd}
		=
		\begin{tikzcd}[row sep = small, column sep = tiny]
			& T^2A \ar[rr, "\mu_A"] \ar[dr, shorten <= -0.2cm, 
			shorten >= -0.2cm, "Ta"'] & \phantom{1}\ar[dr, 
			phantom,  
			"\Downarrow\overline{G}"] & TA \ar[dr, shorten <= -0.2cm, 
			shorten >= -0.2cm, "G"] & 
			\\
			T^3A \ar[ur, shorten <= -0.2cm, 
			shorten >= -0.2cm, "\mu_{TA}"] \ar[dr, shorten <= -0.2cm, 
			shorten >= -0.2cm, "T^2a"'] & & T^2A \ar[rr, "G"] & 
			\phantom{1} \ar[dl, phantom, "\Downarrow\overline{G}"] & B
			\\
			& T^2A \ar[ur, shorten <= -0.2cm, 
			shorten >= -0.2cm, "\mu_A"] \ar[rr, "Ta"'] & \phantom{1} & TA 
			\ar[ur, shorten <= -0.2cm, 
			shorten >= -0.2cm,
			"G"']&
		\end{tikzcd}
	\end{equation*} 
	and the marked equation
	\begin{equation*}
		\begin{tikzcd}[row sep = tiny, column sep = small]
			& & TA \ar[dr, shorten <= -0.2cm, 
			shorten >= -0.2cm, "G"] \ar[dd, phantom, "\Downarrow\overline{G}"] &
			\\
			A_\sigma \ar[r, hook, "\iota"]	& T^2A \ar[ur, shorten <= -0.2cm, 
			shorten >= -0.2cm, "\mu_A"] \ar[dr, shorten <= -0.2cm, 
			shorten >= -0.2cm, "Ta"']& & B
			\\
			& & TA \ar[ur, shorten <= -0.2cm, 
			shorten >= -0.2cm, "G"'] &
		\end{tikzcd}
		=
		\begin{tikzcd}
			A_\sigma \ar[rr, "G_\sigma"]& & B
		\end{tikzcd}.
	\end{equation*}
	
	The following proposition justifies our notion of marked codescent cocones 
	and 
	shows that it is really the bridge to marked-lax natural transformations.
	
	\begin{pro}
		\label{pro:sigma_cocone_bijection_with_sigma_trans}
		There is an isomorphism of categories
		\begin{equation*}
			\SigmaCocone(\frA_\sigma, B) \cong [\twoC, \Cat]_{l, \Sigma}(A, B),
		\end{equation*}
		natural in B.
	\end{pro}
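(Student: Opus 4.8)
The plan is to isolate exactly how a marked codescent cocone of $\frA_\sigma$ differs from an ordinary lax codescent cocone of its underlying strict coherence data, and to show that this difference is precisely the marking condition separating marked-lax from plain lax natural transformations. Throughout I would exploit that the two relevant equivalences are already available: Lack's identification of lax codescent cocones of the strict coherence data $T^3A \rightrightarrows T^2A \rightrightarrows TA$ with lax $T$-morphisms \longcite{Lack:2002}, and \longref{Theorem}{thm:presheaves_cats_are_T-Alg}, which identifies lax $T$-morphisms for $T = E^*\Lan{E}$ with lax natural transformations in $[\twoC,\Cat]_l$.

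First I would strip the marked piece off $\frA_\sigma$. Forgetting $A_\sigma$, $\frz = \iota$ and $\frj = T\eta_A$, the remaining diagram is exactly the strict coherence data of $T$, and the triple $(B, G, \overline G)$ underlying a marked codescent cocone (with $G := G_\sigma \cdot T\eta_A$) satisfies precisely the multiplicative equation, hence is a lax codescent cocone of that data. By Lack's correspondence such cocones coincide with lax $T$-morphisms $(F,\overline F)\colon (A,a)\to(B,b)$, where $G = b\cdot TF$ and $\overline G = b\cdot T\overline F$; by \longref{Theorem}{thm:presheaves_cats_are_T-Alg} these are in turn the lax natural transformations $F\colon A\to B$. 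Thus the underlying data already give a bijection with lax natural transformations, and it remains only to match the marked equation with the marking.

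Next I would analyse the marked equation itself. Since $\frz = \iota$ picks out the sub-object $A_\sigma = \sum_{d_1}\sum_d \twoC(d_1,-)\times\Sigma(d,d_1)\times A(d)$ of $T^2A$, whiskering $\overline G$ by $\iota$ restricts it to those triples $(h,k,x)$ with $k\in\Sigma$. By the computation recorded in \longref{Diagram}{diag:G_bar} and \eqtref{Equation}{eqt:sigma_modified}, the component of $\overline G$ at such a triple is $B(h)\cdot F_k$, and the marked equation asks exactly that this be trivial for every $k\in\Sigma$; its well-typedness moreover forces the object-level identities $G\cdot\mu_A\cdot\iota = G_\sigma = G\cdot Ta\cdot\iota$. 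Taking $h=\id$ yields $F_k = 1$ for all $k\in\Sigma$, i.e.\ that $F$ is marked-lax; conversely, if $F_k = 1$ for $k\in\Sigma$ then $B(h)\cdot F_k = 1$, the object-level identities hold automatically, and $G_\sigma := G\cdot\mu_A\cdot\iota$ exhibits $(B,G_\sigma,\overline G)$ as a marked codescent cocone. This gives the bijection on objects, and I would observe that once the marking holds $G_\sigma$ carries no information beyond $G$.

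For morphisms I would fix the nadir $B$ and invoke \longref{Remark}{rk:sigma_cocone_mor}: a morphism of marked codescent cocones is a $2$-cell $\theta\colon G_{\sigma,1}\Rightarrow G_{\sigma,2}$ with $\theta\frt\circ\upsilon_1 = \upsilon_2\circ\theta\frs$. Transporting $\theta$ along the object-level bijection yields a $2$-cell between the associated lax $T$-morphisms, and this compatibility equation becomes precisely the $T$-transformation axiom of \longref{Definition}{def:T-trans}, hence a modification of marked-lax natural transformations under \longref{Theorem}{thm:presheaves_cats_are_T-Alg}. Functoriality of these assignments and naturality in $B$ are then routine, since every construction used (whiskering by $\iota$, $\mu_A$, $Ta$, $T\eta_A$, and the defining adjunction isomorphisms) is natural in the nadir. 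The hard part is the middle step: one must verify carefully that the single marked equation encodes neither more nor less than $F_k = 1$ for $k\in\Sigma$ — in particular that the object-level equalities built into its typing follow automatically from its $2$-cell part — so that passing between the primary datum $G_\sigma$ of the cocone and the primary datum $G$ of the lax $T$-morphism loses no information.
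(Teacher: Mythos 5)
Your proposal is correct and takes essentially the same route as the paper's proof: reduce via Lack's correspondence between lax codescent cocones and lax $T$-morphisms (equivalently, \longref{Theorem}{thm:presheaves_cats_are_T-Alg}'s lax natural transformations), translate the marked equation componentwise into $\overline{F}_e = 1$ for all $e \in \Sigma$ (the paper's computation that $B(f)\cdot\overline{F}_e$ must be the identity, which you specialise at $f = \id$), and identify morphisms of cocones with $T$-transformations, i.e.\ modifications. Your extra care about the object-level equalities forced by the typing of the marked equation and about recovering $G_\sigma = G\cdot\mu_A\cdot\iota$ from $G$ makes explicit what the paper leaves implicit (as does the unital equation of Lack's cocones, which your step one silently needs but which follows from the marked equation by whiskering with $T\eta_A$, identities being in $\Sigma$), but the argument is the same.
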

	
	\begin{proof}
		In \longcite{Lack:2002}, it is shown that a 
		lax codescent cocone
		\begin{center}
			\begin{tikzcd}[row sep = tiny, column sep = tiny]
				& TA \ar[dr, shorten <= -0.2cm, shorten >= -0.2cm, "G"] 
				\ar[dd, phantom, "\Downarrow \overline{G}"]&
				\\
				T^2A \ar[ur, shorten <= -0.2cm, shorten >= -0.2cm, 
				"\mu_A"] \ar[dr, shorten <= -0.2cm, shorten >= -0.2cm, 
				"Ta"'] & & B
				\\
				& TA \ar[ur, shorten <= -0.2cm, shorten >= -0.2cm, "G"'] &
			\end{tikzcd}
		\end{center}
		is equivalent to a lax $T$-morphism $(F \colon A \to B, \overline{F} 
		\colon b \cdot Tf \Rightarrow F \cdot a)$, in which we have $G = b 
		\cdot TF$ 
		and $\overline{G} = b \cdot T\overline{F}$.
		
		Similarly, we can express a marked codescent cocone $(B, G_\sigma 
		\colon 
		A_\sigma \to B, \overline{G})$ in terms of $(F, \overline{F})$, except 
		that our marked equation replaces the original unital equation. It 
		remains to transform the marked equation into an equation in terms of 
		$(F, \overline{F})$.
		
		To ask
		\begin{equation*}
			\begin{tikzcd}[row sep = tiny, column sep = small]
				& & TA \ar[dr, shorten <= -0.2cm, 
				shorten >= -0.2cm, "G"] \ar[dd, phantom, 
				"\Downarrow\overline{G}"] &
				\\
				A_\sigma \ar[r, hook, "\iota"]	& T^2A \ar[ur, shorten <= 
				-0.2cm, 
				shorten >= -0.2cm, "\mu_A"] \ar[dr, shorten <= -0.2cm, 
				shorten >= -0.2cm, "Ta"']& & B
				\\
				& & TA \ar[ur, shorten <= -0.2cm, 
				shorten >= -0.2cm, "G"'] &
			\end{tikzcd}
			=
			\begin{tikzcd}
				A_\sigma \ar[rr, "G_\sigma"]& & B
			\end{tikzcd}.
		\end{equation*}
		is equivalently to ask
		\begin{equation*}
			\begin{tikzcd}[row sep = tiny, column sep = small]
				& & T^2B \ar[dr, shorten <= -0.2cm, 
				shorten >= -0.2cm, "Tb"] \ar[dd, phantom, 
				"\Downarrow T\overline{F}"] & &
				\\
				A_\sigma \ar[r, hook, "\iota"]	& T^2A \ar[ur, shorten <= 
				-0.2cm, 
				shorten >= -0.2cm, "T^2F"] \ar[dr, shorten <= -0.2cm, 
				shorten >= -0.2cm, "Ta"']& & TB \ar[r, "b"] & B
				\\
				& & TA \ar[ur, shorten <= -0.2cm, 
				shorten >= -0.2cm, "TF"'] & &
			\end{tikzcd}
			=
			\begin{tikzcd}
				A_\sigma \ar[rr, "G_\sigma"]& & B
			\end{tikzcd}.
		\end{equation*}
		The component of the left hand side at $c \in \twoC$ is given by
		\begin{center}
			\begin{tikzcd}[scale cd = 0.9 ,row sep = tiny, column sep = tiny]
				& &  {\twoC(d_1, c) \times \twoC(d, d_1) \times 
					B(d)} \ar[dr, "Tb_c"] 
				& &
				\\
				{ \twoC(d_1, c) \times 
					\Sigma(d, d_1) \times A(d)} \ar[r, hook, "\inc"]  & 
				{\twoC(d_1, c) \times \twoC(d, d_1) \times 
					A(d)} \ar[ur, "{T^2F}_c"] \ar[dr, "Ta_c"']& & {\twoC(d_1, 
					c) \times B(d_1)} \ar[r, 
				"b_c"] & B(c)
				\\
				& & {\twoC(d_1, c) \times A(d_1)} \ar[ur, "TF_c"'] 
				\ar[Rightarrow, shorten=2.5mm, 
				from=1-3, 
				"T\overline{F}_c"]  & &
			\end{tikzcd},
		\end{center}
		hence the equation is precisely to ask for any $(f, e, x) \in \sum_{d_1 
			\in 
			\twoC} \sum_{d \in \twoC} \twoC(d_1, c) \times 
		\Sigma(d, d_1) \times A(d)$, the $2$-cell ${b_c \cdot 
			T\overline{F}_c}_{(f, e, x)} = B(f) \cdot \overline{F}_c$ is the 
		identity, depicted below:
		\begin{equation*}
			\begin{tikzcd}
				A(d) \ar[r, "F_d"] \ar[d, "A(e)"']	& B(d) \ar[d, "B(e)"]  &
				\\
				A(d_1) \ar[r, "F_{d_1}"'] \ar[Leftarrow, to=1-2, 
				shorten=4.5mm, "\overline{F}_e"']	&  B(d_1) \ar[r, "B(f)"] & 
				B(c)
			\end{tikzcd}
			=
			\begin{tikzcd}
				A(d) \ar[r, "F_d"] & B(d) \ar[r, "B(f)"]  & B(c)
			\end{tikzcd},
		\end{equation*}
		which is equivalent to ask that $\overline{F}_e = 1$. All in all, this 
		asks for any $e \in \Sigma$, $\overline{F}_e = 1$, which is exactly the 
		condition for a marked-lax natural transformation.
		
		It is left to show that a morphism of marked codescent cocones 
		corresponds 
		to a modification between two marked-lax natural transformations.
		
		Following \longcite{Lack:2002}, it is clear that a morphism $(B, 
		G_1 
		\colon TA \to B, \overline{G_1} \colon G_1 \cdot \mu_A \Rightarrow G_1 
		\cdot Ta) \to (B, G_2 \colon TA \to B, \overline{G_2} \colon G_2 \cdot 
		\mu_A \Rightarrow G_2 \cdot Ta)$ of marked codescent cocones given by a 
		$2$-cell $\theta \colon G_1 \Rightarrow G_2$ satisfying $\theta \cdot 
		Ta \cdot \overline{G_1} = \overline{G_2} \cdot \theta \cdot \mu_A$, is 
		equivalent to a $T$-transformation $(F_1 \colon A \to B, \overline{F_1} 
		\colon b \cdot TF_1 \Rightarrow F_1 \cdot a) \to (F_2 \colon A \to B, 
		\overline{F_2} \colon b \cdot TF_2 \Rightarrow F_2 \cdot a)$ given by a 
		$2$-cell $\rho \colon F_1 \Rightarrow F_2$ satisfying
		\begin{equation*}
			\begin{tikzcd}
				TA \ar[d, "a"'] \ar[r, bend left, "TF_1"{name=T}] \ar[r, bend 
				right, 
				"TF_2"'{name=M}] \ar[Rightarrow, shorten=2mm, from=T, to=M, 
				"T\rho"]  & TB 
				\ar[d, "b"] 
				\\
				A \ar[r, bend right, "F_2"'] \ar[Rightarrow,
				shorten=5mm,  start 
				anchor={[yshift=-0.25cm]}, end 
				anchor={[yshift=-0.25cm]},
				from=1-2, "\overline{F_2}"]& B
			\end{tikzcd}
			=
			\begin{tikzcd}
				TA \ar[d, "a"'] \ar[r, bend left, "TF_1"] & TB \ar[d, "b"] 
				\\
				A \ar[r, bend right, "F_2"'{name=B}] \ar[r, bend left, 
				"F_1"{name=M}] \ar[Rightarrow, shorten=2mm, from=M, to=B, 
				"\rho"] \ar[Rightarrow,
				shorten=5mm,  start 
				anchor={[yshift=0.25cm]}, end 
				anchor={[yshift=0.25cm]},
				from=1-2, "\overline{F_1}"'] & B
			\end{tikzcd},
		\end{equation*}
		where $G = b \cdot TF$, $\overline{G} = b \cdot T\overline{F}$ and 
		$\theta = b \cdot T\rho$.
		Consider the components at $c \in \twoC$ , the above equation then says
		\begin{equation*}
			\begin{tikzcd}
				{\twoC(d, c) \times A(d)} \ar[d, "a_c"'] \ar[r, bend left, 
				"{TF_1}_c"{name=T}] \ar[r, bend right, 
				"{TF_2}_c"'{name=M}] \ar[Rightarrow, shorten=5mm, from=T, to=M, 
				"T\rho_c"]  & 
				{\twoC(d, c) \times B(d)} 
				\ar[d, "b_c"] 
				\\
				A(c) \ar[r, bend right, "{F_2}_c"'] \ar[Rightarrow,
				shorten=9mm,  start 
				anchor={[yshift=-0.5cm]}, end 
				anchor={[yshift=-0.5cm]},
				from=1-2, "\overline{F_2}_c"]& B(c)
			\end{tikzcd}
			=
			\begin{tikzcd}
				{\twoC(d, c) \times A(d)} \ar[d, "a_c"'] \ar[r, bend left, 
				"{TF_1}_c"] & {\twoC(d, c) \times B(d)}  \ar[d, "b_c"] 
				\\
				A(c) \ar[r, bend right, "{F_2}_c"'{name=B}] \ar[r, bend left, 
				"{F_1}_c"{name=M}] \ar[Rightarrow, shorten=5mm, from=M, to=B, 
				"\rho_c"] \ar[Rightarrow,
				shorten=9mm,  start 
				anchor={[yshift=0.8cm]}, end 
				anchor={[yshift=0.8cm]},
				from=1-2, "\overline{F_1}_c"']  & B(c)
			\end{tikzcd}.
		\end{equation*}
		This means for $(f \colon d \to c, x \in A(d))$,
		\begin{align*}
			\rho_c(A(f)(x)) \circ \overline{F_1}_f &= \overline{F_2}_f \circ 
			B(f)(\rho_d(x)),
		\end{align*}
		which is exactly the modification axiom.
	\end{proof}
	
	We achieve the following desired theorem:
	
	\begin{pro}
		\label{pro:sigma_left_adj}
		There is an isomorphism of categories
		\begin{equation*}
			[\twoC, \Cat](A^\ddagger, B) \cong 
			[\twoC, \Cat]_{l, \Sigma}(A, B),
		\end{equation*}
		natural in B.
		
		In other words, there is a left adjoint $()^\ddagger \colon [\twoA, 
		\Cat]_{l, \Sigma} 
		\to [\twoA, \Cat]$ to the inclusion.
	\end{pro}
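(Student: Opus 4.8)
The plan is to derive the stated natural isomorphism by composing two isomorphisms already in hand, both applied to the concrete marked coherence data $\frA_\sigma$ associated to the strict $T$-algebra $(A, a)$ corresponding to the $2$-functor $A \colon \twoC \to \Cat$.

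First I would set $A^\ddagger := W * \frA_\sigma$, the weighted colimit of $\frA_\sigma$ by the marked weight $W$ of \longref{Proposition}{pro:sigma_cocone_bijection}; this colimit exists in $[\twoC, \Cat]$ because $\Cat$ is cocomplete and weighted colimits in a functor $2$-category are computed pointwise. Instantiating the abstract result \longref{Proposition}{pro:sigma_codescent_obj} with the ambient $2$-category taken to be $[\twoC, \Cat]$ and the coherence data $\frA_\sigma$, the triple $(A^\ddagger, x^\ddagger, \chi^\ddagger)$ is the universal marked codescent cocone, whence
\[
\SigmaCocone(\frA_\sigma, B) \cong [\twoC, \Cat](A^\ddagger, B)
\]
naturally in $B \in \ob [\twoC, \Cat]$.

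Next I would invoke \longref{Proposition}{pro:sigma_cocone_bijection_with_sigma_trans}, which supplies $\SigmaCocone(\frA_\sigma, B) \cong [\twoC, \Cat]_{l, \Sigma}(A, B)$ naturally in $B$. Composing the two isomorphisms produces exactly
\[
[\twoC, \Cat](A^\ddagger, B) \cong [\twoC, \Cat]_{l, \Sigma}(A, B),
\]
natural in $B$, which is the first assertion. Since this is an isomorphism of hom-categories natural in $B$, it says that $A^\ddagger$ represents the $\Cat$-valued $2$-functor $B \mapsto [\twoC, \Cat]_{l, \Sigma}(A, B)$; by enriched representability, letting $A$ vary upgrades the assignment $A \mapsto A^\ddagger$ to a $2$-functor $()^\ddagger$ that is left $2$-adjoint to the inclusion $[\twoC, \Cat] \hookrightarrow [\twoC, \Cat]_{l, \Sigma}$, which is the second assertion.

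The main obstacle I anticipate is bookkeeping rather than conceptual: one must check that the two input isomorphisms are natural in the nadir $B$ as isomorphisms \emph{of categories}, not merely bijections of objects, so that they compose to a $\Cat$-natural isomorphism and the enriched Yoneda argument genuinely applies. The functoriality of $()^\ddagger$ on marked-lax natural transformations and modifications, together with naturality in the variable $A$, is then automatic from the uniqueness clause in the universal property of the marked codescent object; the unit of the resulting adjunction is the universal cocone $x^\ddagger$ read as a marked-lax transformation $A \to A^\ddagger$ under the correspondence of \longref{Proposition}{pro:sigma_cocone_bijection_with_sigma_trans}.
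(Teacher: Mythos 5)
Your proposal is correct and follows essentially the same route as the paper: both establish the isomorphism by chaining the universal property of the weighted colimit $A^\ddagger = W * \frA_\sigma$ (which you cite in its packaged form, \longref{Proposition}{pro:sigma_codescent_obj}, while the paper invokes \longref{Proposition}{pro:sigma_cocone_bijection} together with the colimit's universal property directly) with \longref{Proposition}{pro:sigma_cocone_bijection_with_sigma_trans}. The extra remarks on pointwise existence of the colimit, naturality, and the representability argument for the adjunction are consistent with what the paper leaves implicit.
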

	
	\begin{proof}
		By \longref{Proposition}{pro:sigma_cocone_bijection}, 
		\longref{Proposition}{pro:sigma_cocone_bijection_with_sigma_trans}, and 
		the universal property of the weighted colimit $A^\ddagger = W * 
		\frA_\sigma$, we 
		have the following chain of isomorphisms:
		\begin{align*}
			[\twoC, \Cat](W * \frA_\sigma, B)) &\cong [\Delta_\sigma, \Cat](W, 
			[\twoC, \Cat](\frA_\sigma - , B))
			\\
			&\cong \SigmaCocone(\frA_\sigma, B)
			\\
			&\cong [\twoC, \Cat]_{l, \Sigma}(A, B). \qedhere
		\end{align*}
	\end{proof}
	
	\begin{theorem}
		\label{thm:sigma_left_adj}
		Let $(\twoA, \Sigma)$ be a marked $2$-category. Tthe marked-lax limit 
		of a $2$-functor $F \colon \twoA \to 
		\twoB$ has the same universal property as the $\Cat$-weighted limit 
		$\{\triangle(\mathbf{1})^\ddagger, 
		F\}$.
	\end{theorem}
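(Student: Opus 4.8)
The plan is to exhibit a chain of natural isomorphisms linking the two defining hom-categories, so that the marked-lax limit and the weighted limit are characterised by one and the same universal property. Recall that $\sigmalim{l}{F}$ is characterised by $\twoB(B, \sigmalim{l}{F}) \cong [\twoA, \twoB]_{l, \Sigma}(\triangle(B), F)$, naturally in $B$, whereas by \longref{Definition}{def:weighted_lim} (with $\monV = \Cat$) the weighted limit $\{\triangle(1)^\ddagger, F\}$ is characterised by $\twoB(B, \{\triangle(1)^\ddagger, F\}) \cong [\twoA, \Cat](\triangle(1)^\ddagger, \twoB(B, F-))$, naturally in $B$. It therefore suffices to produce a natural isomorphism
\[
[\twoA, \twoB]_{l, \Sigma}(\triangle(B), F) \cong [\twoA, \Cat](\triangle(1)^\ddagger, \twoB(B, F-)).
\]

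First I would establish the reduction isomorphism $[\twoA, \twoB]_{l, \Sigma}(\triangle(B), F) \cong [\twoA, \Cat]_{l, \Sigma}(\triangle(1), \twoB(B, F-))$ by directly unpacking the two sides. A marked-lax natural transformation $\alpha \colon \triangle(B) \to F$ assigns to each object $a$ a morphism $\alpha_a \colon B \to F(a)$ and to each $f \colon a \to a'$ a $2$-cell $\alpha_f \colon F(f)\alpha_a \Rightarrow \alpha_{a'}$, with $\alpha_f = 1$ whenever $f \in \Sigma$. On the other side, a marked-lax natural transformation $\beta \colon \triangle(1) \to \twoB(B, F-)$ assigns to each $a$ a functor $\mathbf{1} \to \twoB(B, F(a))$, that is an object $\beta_a \in \twoB(B, F(a))$, i.e.\ a morphism $B \to F(a)$, and to each $f$ a natural transformation $\twoB(B,F(f))\circ\beta_a \Rightarrow \beta_{a'}$ whose single component is a $2$-cell $F(f)\beta_a \Rightarrow \beta_{a'}$, again trivial on $\Sigma$. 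These data agree on the nose; the lax unit and composition coherences, the marking conditions, and the modification axioms translate term for term, and naturality in $B$ is immediate from the functoriality of $\twoB(B, F-)$ in its representing object.

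Next I would invoke \longref{Proposition}{pro:sigma_left_adj} with the domain weight $\triangle(1)$ and with the target $2$-functor $\twoB(B, F-) \colon \twoA \to \Cat$, which is a genuine $2$-functor, being the composite of $F$ with the representable $\twoB(B, -)$. This yields
\[
[\twoA, \Cat]_{l, \Sigma}(\triangle(1), \twoB(B, F-)) \cong [\twoA, \Cat](\triangle(1)^\ddagger, \twoB(B, F-)),
\]
naturally in the codomain and hence in $B$. Composing with the reduction isomorphism of the previous step gives the required natural isomorphism, whose right-hand side is precisely $\twoB(B, \{\triangle(1)^\ddagger, F\})$. Since $\twoB(-, \sigmalim{l}{F})$ and $\twoB(-, \{\triangle(1)^\ddagger, F\})$ are thereby represented by the same functor of $B$, the two limits have the same universal property.

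The main obstacle I expect is the bookkeeping in the reduction step: one must verify that the laxity $2$-cells, the coherence equations, and the modification condition are preserved under the passage between the $\twoB$-valued and the $\Cat$-valued descriptions, and that the marking condition $\alpha_f = 1$ corresponds exactly to the vanishing of the single component of $\beta_f$. None of this is deep, but it requires care to confirm that the correspondence is genuinely an isomorphism of categories, not merely an equivalence, and that it is natural (contravariantly) in $B$, so that the phrase \emph{same universal property} is fully justified. Once the reduction is in place, the remaining step is a purely formal application of the adjunction of \longref{Proposition}{pro:sigma_left_adj}.
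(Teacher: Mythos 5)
Your proposal is correct and takes essentially the same route as the paper: the identical chain of isomorphisms, namely the defining isomorphism of the marked-lax limit, the reduction isomorphism $[\twoA, \twoB]_{l, \Sigma}(\triangle(B), F) \cong [\twoA, \Cat]_{l, \Sigma}(\triangle(1), \twoB(B, F-))$, the adjunction of \longref{Proposition}{pro:sigma_left_adj}, and the defining isomorphism of the $\Cat$-weighted limit. The paper simply records this chain without comment, whereas you spell out the unpacking needed for the reduction step; that extra verification is exactly the routine bookkeeping the paper leaves implicit.
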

	
	\begin{proof}
		We have
		\begin{align*}
			\twoB(B, \sigmalim{l}{F}) &\cong [\twoA, \twoB]_{l, 
				\Sigma}(\triangle(B), 
			F)
			\\
			&\cong [\twoA, \Cat]_{l, \Sigma}(\triangle(\mathbf{1}), \twoB(B, F-))
			\\
			&\cong [\twoA, \Cat](\triangle(\mathbf{1})^\ddagger, \twoB(B, F-))
			\\
			&\cong \twoB(B, \{\triangle(\mathbf{1})^\ddagger, F\}). \qedhere
		\end{align*}
	\end{proof}
	
	\subsection{\texorpdfstring{$\Cat$}{Cat}-weighted limits as marked limits}
	
	The other direction that any $\Cat$-weighted limit can be equivalently 
	expressed as a marked-lax limit with the same universal property is 
	well-known. This has first been discussed in Street's 
	paper \longcite{Street:1976}, and also in the recent 
	work \longcite{Mesiti:2023}. Nonetheless, we will have to recall the 
	construction for this 
	result in later sections, hence we decide to present the proof.
	
	Let $\twoA, \twoD$ be $2$-categories, let $W \colon \twoD \to \Cat$ be a 
	weight and $R \colon \twoD \to \twoA$ be a $2$-functor. 
	
	The \emph{$2$-category of elements of $W$} consists of
	\begin{enumerate}
		\item [$\bullet$] objects those pairs $(D, \delta)$;
		\item [$\bullet$] morphisms $(d, \omega) \colon (D_1, \delta_1) \to 
		(D_2, \delta_2)$ given by pairs $(d \colon D_1 \to D_2, \omega 
		\colon 
		Wd \delta_1 \to \delta_2)$;
		\item [$\bullet$] $2$-cells $\alpha \colon (d, \omega) \Rightarrow (d', 
		\omega')$ given by the $2$-cells $\alpha \colon d 
		\Rightarrow d'$ in $\twoD$ such that $\omega' W\alpha \delta_1 = 
		\omega$,
	\end{enumerate}
	and the composition of two morphisms $(h \colon D_1 \to D_2, \eta \colon 
	Wh\delta_1 \to \delta_2)$ and $(k \colon D_2 \to D_3, \kappa\colon 
	Wk\delta_2 \to \delta_3)$ is given by 
	\begin{equation*}
		(kh \colon D_1 \to D_3, \kappa \circ 
		(Wk \cdot \eta) \colon WkWh\delta_1 \to \delta_3).
	\end{equation*}
	We also have the projection of $\twoEl{W}$ onto $\twoD$:
	\begin{align*}
		P \colon \twoEl{W} &\to \twoD,
		\\
		(D, \delta) &\mapsto D.
	\end{align*}
	
	\begin{rk}
		Following \longcite{Mesiti:2023} $\twoEl{W}$ is the lax comma object of 
		$\triangle(\mathbf{1})\colon 1 \to \Cat$ and $W \colon \twoD \to \Cat$
		\begin{center}
			\begin{tikzcd}
				\twoEl{W} \ar[r, "P"] \ar[d, "!"'] & \twoD \ar[d, 
				"W"]
				\\
				1 \ar[Rightarrow, to=1-2, 
				shorten=6mm, "\mu"']
				\ar[r, "\triangle(\mathbf{1})"'] & \Cat
			\end{tikzcd}.
		\end{center}
	\end{rk}
	
	We are going to show that there is a certain marked-lax limit possessing 
	the 
	same 
	universal property of the weighted limit $\{W, R\}$.
	
	\begin{pro}
		\label{pro:weighted_to_sigma}
		Let $\Sigma := \{(d \in \mor \twoD, 1)\}$ be the class of 
		morphisms in $\twoEl{W}$. 
		Then
		\begin{equation*}
			[\twoD, \Cat](W, \twoA(A, R-)) \cong [\twoEl{W}, \twoA]_{l, 
				\Sigma}(\triangle(A), RP).
		\end{equation*}
	\end{pro}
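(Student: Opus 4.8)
The plan is to build an explicit isomorphism of categories matching object-data and morphism-data on both sides, the key device being that every morphism of $\twoEl{W}$ factors canonically as a $\Sigma$-morphism followed by a ``vertical'' one lying in a single fibre of $P$. Concretely, a morphism $(d,\omega)\colon(D_1,\delta_1)\to(D_2,\delta_2)$ factors as
\[
(D_1,\delta_1)\xrightarrow{(d,1)}(D_2, Wd\,\delta_1)\xrightarrow{(1_{D_2},\omega)}(D_2,\delta_2),
\]
where $(d,1)\in\Sigma$ and $(1_{D_2},\omega)$ sits over $1_{D_2}$; this is immediate from the composition rule in $\twoEl{W}$. Given a $2$-natural transformation $\gamma\colon W\to\twoA(A,R-)$ with components the functors $\gamma_D\colon WD\to\twoA(A,RD)$, I would define a marked-lax natural transformation $\alpha\colon\triangle(A)\to RP$ by setting $\alpha_{(D,\delta)}:=\gamma_D(\delta)\colon A\to RD$ and, for a morphism $(d,\omega)$, the $2$-component $\alpha_{(d,\omega)}:=\gamma_{D_2}(\omega)$. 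The $1$-naturality of $\gamma$ identifies $\gamma_{D_2}(Wd\,\delta_1)$ with $Rd\circ\gamma_{D_1}(\delta_1)$, so $\gamma_{D_2}(\omega)$ does have the source $Rd\circ\alpha_{(D_1,\delta_1)}$ and target $\alpha_{(D_2,\delta_2)}$ required of a $2$-component of a lax transformation out of the constant functor.

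For the forward direction I would verify the marked-lax axioms for $\alpha$. The marking condition is automatic, since for $(d,1)\in\Sigma$ one has $\alpha_{(d,1)}=\gamma_{D_2}(1)=1$ because $\gamma_{D_2}$ is a functor. After cancelling the constant-functor terms $\triangle(A)(d,\omega)=1_A$ and using $RP(d,\omega)=Rd$, the lax unity and lax composition axioms for $\alpha$ reduce exactly to the functoriality of each $\gamma_D$ (for vertical composites) together with $1$-naturality of $\gamma$ (which supplies the $Rd$-whiskering in the hexagon), while the lax $2$-cell axiom reduces to $2$-naturality of $\gamma$; all of these are routine pastings once each $2$-component is split by the factorisation above into a trivial $\Sigma$-part and a fibrewise part.

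The substance of the argument, which I expect to be the main obstacle, is the converse construction, where the marking must be made to rigidify a lax transformation into a strict one. Given a marked-lax $\alpha$, I set $\gamma_D(\delta):=\alpha_{(D,\delta)}$ and, for $\omega\colon\delta_1\to\delta_2$ in $WD$, $\gamma_D(\omega):=\alpha_{(1_D,\omega)}$, which has the correct type as $R(1_D)=1_{RD}$; functoriality of $\gamma_D$ follows from the lax unity and composition axioms restricted to vertical morphisms $(1_D,-)$. For $1$-naturality I would use the marking directly: $(d,1)\in\Sigma$ forces $\alpha_{(d,1)}=1$, giving $Rd\circ\alpha_{(D_1,\delta)}=\alpha_{(D_2,Wd\,\delta)}$ on objects, and applying the composition axiom to the two factorisations of $(d,Wd\,\omega)$ — namely $(d,1)\circ(1_{D_1},\omega)$ and $(1_{D_2},Wd\,\omega)\circ(d,1)$ — yields $Rd\circ\gamma_{D_1}(\omega)=\gamma_{D_2}(Wd\,\omega)$ on morphisms. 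The crucial point is $2$-naturality: for a $2$-cell $\beta\colon d\Rightarrow d'$ in $\twoD$, I would lift it to the $2$-cell $\beta\colon(d,(W\beta)_\delta)\Rightarrow(d',1)$ of $\twoEl{W}$ (the constraint $1\circ(W\beta)_\delta=(W\beta)_\delta$ needed for a $2$-cell of $\twoEl{W}$ holds on the nose), apply the lax $2$-cell axiom of $\alpha$, and use $\alpha_{(d',1)}=1$ together with the composition axiom for $(d,(W\beta)_\delta)=(1_{D_2},(W\beta)_\delta)\circ(d,1)$; once the $\Sigma$-terms vanish this collapses precisely to $\gamma_{D_2}((W\beta)_\delta)=R\beta\ast\gamma_{D_1}(\delta)$, the $2$-naturality equation.

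Finally I would check that the two assignments are mutually inverse — immediate on $1$-components, and on $2$-components because the factorisation forces $\alpha_{(d,\omega)}=\gamma_{D_2}(\omega)$ — and that the bijection is functorial in the hom-categories. A modification from $\gamma$ to $\gamma'$ consists of natural transformations $\Gamma_D\colon\gamma_D\Rightarrow\gamma'_D$ subject to the modification axiom across $\twoD$; I would match it with the family of $2$-cells $\Gamma_{(D,\delta)}:=(\Gamma_D)_\delta\colon\alpha_{(D,\delta)}\Rightarrow\alpha'_{(D,\delta)}$. Splitting the marked-lax modification axiom along the same vertical/$\Sigma$ factorisation shows that its vertical part is exactly the naturality of each $\Gamma_D$, while its $\Sigma$ part (where both $2$-components are identities) is exactly the across-$\twoD$ modification condition, so the two notions of modification coincide. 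As all constructions are visibly natural in $A$, this yields the stated isomorphism of categories.
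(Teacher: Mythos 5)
Your proposal is correct and follows essentially the same route as the paper's proof: the same converse construction $\gamma_D(\delta):=\alpha_{(D,\delta)}$, $\gamma_D(\omega):=\alpha_{(1_D,\omega)}$, the same use of the marking $\alpha_{(d,1)}=1$ to obtain strict $1$-naturality, the same lift of a $2$-cell $\theta\colon d\Rightarrow d'$ to a $2$-cell $(d,W\theta\delta)\Rightarrow(d',1)$ in $\twoEl{W}$ to get $2$-naturality, and the same componentwise matching of modifications. The only cosmetic difference is that the paper packages your forward-direction construction abstractly as whiskering with $P$ followed by pre-composition with the lax comma $2$-cell $\mu$, whereas you write out its components and verify the marked-lax axioms by hand.
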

	\begin{proof}
		It suffices to show
		\begin{equation*}
			[\twoD, \Cat](W, \twoA(A, R-)) \cong [\twoEl{W}, \Cat]_{l, 
				\Sigma}(\triangle(\mathbf{1}), \twoA(A, RP-)).
		\end{equation*}
		
		Let $\beta \colon W \to \twoA(A, R-)$ be a $2$-natural transformation, 
		we 
		can then form the composition of $\mu$ and $\beta P$
		\begin{center}
			\begin{tikzcd}
				\twoEl{W} \ar[r, "P"] \ar[d, "!"'] & \twoD \ar[d, "W"'] \ar[d, 
				bend 
				left = 80, end 
				anchor={[xshift=-0.15cm, yshift=-0.15cm]}, "{\twoA(A, R-)}", 
				pos=0.6] \ar[d, phantom, bend left 
				, 
				"{\scalebox{1.2}{$\substack{\Rightarrow 
							\\ \beta}$}}"]
				\\
				1 \ar[Rightarrow, to=1-2, 
				shorten=6mm, "\mu"'] \ar[r, "\triangle(\mathbf{1})"'] & 
				\Cat
			\end{tikzcd}.
		\end{center}
		Since $\beta$ is $2$-natural, the $2$-component of the composition 
		$\beta P 
		\circ \mu$ with 
		respect to a morphism in $\Sigma$ is the identity, hence $\beta P \circ 
		\mu$ is indeed a marked-lax natural transformation. Thus, whiskering 
		with 
		$P$ 
		and pre-composing with $\mu$ defines a functor
		\begin{align}
			\label{functor:G}
			G \colon [\twoD, \Cat](W, \twoA(A, R-)) &\to [\twoEl{W}, \Cat]_{l, 
				\Sigma}(\triangle(\mathbf{1}), \twoA(A, RP-)),
			\\
			\beta &\mapsto \beta P \circ \mu, \nonumber
			\\
			\Gamma \colon \beta_1 \to \beta_2 &\mapsto \Gamma P \circ \mu. 
			\nonumber
		\end{align}
		
		Next, we would like to construct the inverse of $G$.
		
		Let $\alpha \colon 
		\triangle(\mathbf{1})! \to \twoA(A, RP-)$ be a marked-lax 
		natural transformation. For any object $D$ of $\twoD$, define a functor
		\begin{align*}
			\alpha_{(\triangle(D), -)} \colon WD &\to \twoA(A, RD),
			\\
			\delta &\mapsto \alpha_{(D, \delta)},
			\\
			\omega \colon \delta_1 \to \delta_2 &\mapsto \alpha_{(1_D, \omega)}.
		\end{align*}
		Since $\alpha$ is marked natural, for a morphism $(d \colon 
		D_1 
		\to D_2, 1 \colon Wd\delta_1 \to Wd\delta_1) \in \Sigma$, $\alpha_{(d, 
			1)} 
		= 1$. This implies that for any $\delta_1 \in WD_1$, $Rd \cdot 
		\alpha_{(D_1, \delta_1)} 
		= \alpha_{(D_2, Wd\delta_1)}$. So we have a commutative 
		diagram
		\begin{center}
			\begin{tikzcd}
				WD_1 \ar[r, "Wd"] \ar[d, "\alpha_{(\triangle(D_1), -)}"'] & 
				WD_2 
				\ar[d, "\alpha_{(\triangle(D_2), -)}"]
				\\
				\twoA(A, RD_1) \ar[r, "(Rd)_*"'] & \twoA(A, RD_2)
			\end{tikzcd}.
		\end{center}
		Now, let $\theta \colon d \Rightarrow d'$ be a $2$-cell in $\twoD$, and 
		let 
		$(d 
		\colon D_1 \to D_2, W\theta \delta_1 \colon Wd\delta_1 \to 
		Wd'\delta_1)$ 
		and $(d' \colon D_1 \to D_2, 1 \colon Wd'\delta_1 \to Wd'\delta_1)$ be 
		morphisms from $(D_1, \delta_1)$ to $(D_2, Wd'\delta_1)$ in 
		$\twoEl{W}$. 
		Then it is clear that $\theta$ is also a $2$-cell $(d, W\theta\delta_1) 
		\Rightarrow (d', 1)$ in $\twoEl{W}$. Since $\alpha$ is natural, we have
		\begin{equation*}
			\alpha_{(d, W\theta\delta_1)} = (R\theta)_* \cdot \alpha_{(D_1, 
				\delta_1)},
		\end{equation*}	
		which gives
		\begin{equation*}
			\alpha_{(\triangle(D_2), -)} \cdot W\theta = (R\theta)_* \cdot 
			\alpha_{(\triangle(D_1), -)}.
		\end{equation*}
		Altogether, we are able to define a $2$-natural transformation	$\beta 
		\colon W \to \twoA(A, R-)$,	whose $1$-component is given by $\beta_D := 
		\alpha_{(\triangle(D), -)}$, and $2$-component is given by a natural 
		transformation $\beta_d$, where its 
		component at $\delta \in WD$ is given by ${\beta_d}_{\delta 
		} = \alpha_{(d, 1)}$, which is the 
		identity 
		because $\alpha$ is marked-lax natural.
		
		Next, let $\Lambda \colon \alpha_1 \to \alpha_2$ be a modification 
		between 
		the marked-lax natural transformations $\alpha_1$ and $\alpha_2$. 
		Consider 
		the $2$-components of the transformations with respect to the morphism 
		$(d 
		\colon D_1 \to D_2, 1 \colon Wd\delta_1 \to Wd\delta_1)$, we get 
		$\Lambda_{(D_2, Wd\delta_1)} = Rd \cdot \Lambda_{(D_1, \delta_1)}$ from 
		the 
		modification axiom. Therefore, if we construct a map $\Theta 
		\colon \beta_1 \to \beta_2$, where $\beta_1$ and $\beta_2$ are the 
		$2$-natural transformations constructed from $\alpha_1$ and $\alpha_2$, 
		by setting its component at $D$ as a natural transformation $\Theta_D 
		:= 
		\xi^D \colon 
		\alpha_{1_{(\triangle(D), -)}} \to \alpha_{2_{(\triangle(D), -)}}$ with 
		component at $\delta \in WD$ given by $\xi^D_{\delta} := \Lambda_{(D, 
			\delta)} \colon \alpha_{1_{D, \delta}} \to \alpha_{2_{D, \delta}}$, 
		we then 
		obtain a modification $\Theta$ which satisties the modification axiom
		\begin{equation*}
			\Theta_{D_2} \cdot Wd = (Rd)_* \cdot \Theta_{D_1}.
		\end{equation*}
		
		The above constructions of $\beta$ and $\Theta$ define a functor
		\begin{align*}
			G' \colon  [\twoEl{W}, \Cat]_{l, 
				\Sigma}(\triangle(\mathbf{1}), \twoA(A, RP-)) &\to [\twoD, \Cat](W, 
			\twoA(A, 
			R-)),
			\\
			\alpha &\mapsto \beta = \{\alpha_{(\triangle(D), -)}\}_{D \in 
				\twoD},
			\\
			\Lambda \colon \alpha_1 \to \alpha_2 &\mapsto \Theta = 
			\{\{\Lambda_{(D, 
				\delta)} \}_{\delta \in WD}\}_{D \in \twoD}.
		\end{align*}
		
		A straightforward checking shows that $G$ and $G'$ are inverses of each 
		other.
	\end{proof}
	
	\begin{theorem}
		Let $\twoA, \twoD$ be $2$-categories. Let $W \colon \twoD \to \Cat$ be 
		a weight and $R \colon \twoD \to 
		\twoA$ be a $2$-functor. Let $\Sigma := \{(d \in \mor \twoD, 1)\}$ be 
		the class of 
		morphisms in the $2$-category of elements $\twoEl{W}$ of $W$, so that 
		$(\twoEl{W}, \Sigma)$ is a marked $2$-category. The 
		$\Cat$-weighted limit $\{W, R\}$ has the same universal property as the 
		marked-lax limit of the $2$-functor $RP \colon \twoEl{W} \to 
		\twoA$.
	\end{theorem}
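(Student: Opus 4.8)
The plan is to prove the statement by a single chain of natural isomorphisms, exactly parallel to the proof of \longref{Theorem}{thm:sigma_left_adj}: rather than constructing a comparison morphism between the objects $\{W, R\}$ and $\sigmalim{l}{RP}$ directly, I would show that the representable functors $\twoA(A, \{W, R\})$ and $\twoA(A, \sigmalim{l}{RP})$ agree, naturally in $A$, so that by the Yoneda lemma the two objects represent the same $\Cat$-valued presheaf and hence enjoy the same universal property.

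First I would write down the two defining universal properties. By \longref{Definition}{def:weighted_lim}, the weighted limit satisfies
\begin{equation*}
    \twoA(A, \{W, R\}) \cong [\twoD, \Cat](W, \twoA(A, R-)),
\end{equation*}
naturally in $A \in \ob\twoA$, while by the characterisation of the marked-lax limit (with $\twoEl{W}$ in the role of the domain and $\twoA$ in the role of the codomain) we have
\begin{equation*}
    \twoA(A, \sigmalim{l}{RP}) \cong [\twoEl{W}, \twoA]_{l, \Sigma}(\triangle(A), RP),
\end{equation*}
again naturally in $A$. The bridge between the two right-hand sides is precisely \longref{Proposition}{pro:weighted_to_sigma}, which already supplies
\begin{equation*}
    [\twoD, \Cat](W, \twoA(A, R-)) \cong [\twoEl{W}, \twoA]_{l, \Sigma}(\triangle(A), RP)
\end{equation*}
(its proof internally performs the reduction to the terminal weight $\triangle(1)$, so I may invoke it in this $\twoA$-valued form without redoing that step). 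Splicing the three isomorphisms together gives
\begin{align*}
    \twoA(A, \{W, R\}) &\cong [\twoD, \Cat](W, \twoA(A, R-)) \\
    &\cong [\twoEl{W}, \twoA]_{l, \Sigma}(\triangle(A), RP) \\
    &\cong \twoA(A, \sigmalim{l}{RP}),
\end{align*}
which is the desired statement.

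The main obstacle, though a mild one, is the naturality in $A$ of the middle isomorphism, since it is this that licenses the Yoneda conclusion that the two limit objects represent a common presheaf. Naturality of the outer two isomorphisms is built into the definitions of the respective limits, so the only thing I would have to verify is that the functors $G$ and $G'$ of \longref{Proposition}{pro:weighted_to_sigma} are compatible with the action of $\twoA(-, R-)$ in the apex variable, i.e.\ that the assignments $\beta \mapsto \beta P \circ \mu$ and its inverse commute with post-composition by a morphism $A' \to A$ in $\twoA$. Once this compatibility is confirmed, the conclusion follows immediately and no further computation is required.
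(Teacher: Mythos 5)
Your proposal is correct and follows essentially the same route as the paper's own proof: the identical chain of isomorphisms combining the defining property of the $\Cat$-weighted limit, \longref{Proposition}{pro:weighted_to_sigma}, and the defining property of the marked-lax limit, with the Yoneda lemma implicit. Your additional remark about verifying naturality in $A$ of the middle isomorphism is a reasonable point of care that the paper leaves tacit, but it does not change the argument.
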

	
	\begin{proof}
		By \longref{Proposition}{pro:weighted_to_sigma}, we have a chain of 
		isomorphisms
		\begin{align*}
			\twoA(A, \{W, R\}) &\cong [\twoD, \Cat](W, \twoD(A, R-))
			\\
			&\cong [\twoEl{W}, \twoA]_{l, \Sigma}(\triangle(A), RP)
			\\
			&\cong \twoA(A, \sigmalim{l}{RP}). \qedhere
		\end{align*}
	\end{proof}
	
	\section{Preliminaries on enhanced \texorpdfstring{$2$}{2}-category theory}
	\label{sec:F}
	
	\subsection{\texorpdfstring{$\F$}{F}-categories}
	\label{subsec:F-cats}
	
	We recall the basics of enhanced $2$-category theory, which was first 
	proposed by Lack and Shulman in 
	\longcite{LS:2012}. A gentle introduction can also be found in
	\longcite{Bourke:2014} by Bourke.
	
	\begin{defi}
		Let $\F$ be the full subcategory of the arrow category 
		$\Cat^\mathbf{2}$ 
		of the category
		$\Cat$, determined by the fully faithful and injective-on-object 
		functors, i.e., the \emph{full embeddings}.
	\end{defi}
	
	In other words, an object of $\F$ is a full embedding
	\begin{equation*}
		A_\tau \xhookrightarrow{j_A} A_\lambda,
	\end{equation*}
	a morphism $f$ from $j_A$ to $j_B$ in $\F$ is given by two functors 
	$f_\tau \colon A_\tau \to B_\tau$ and 
	$f_\lambda \colon A_\lambda \to B_\lambda$ 
	making the following square commute
	\begin{center}
		\begin{tikzcd}
			A_\tau \ar[r, hook, "j_A"] \ar[d, "f_\tau"'] & A_\lambda \ar[d, 
			"f_\lambda"]
			\\
			B_\tau \ar[r, hook, "j_B"'] & B_\lambda
		\end{tikzcd}.
	\end{center}
	
	We call $A_\tau$ the \emph{tight} part of $A$, and $A_\lambda$ the 
	\emph{loose} part of 
	$A$; similarly, we apply this terminology to $f$.
	
	\begin{rk}
		$\F$ is (co)complete and Cartesian closed.
	\end{rk}
	
	An \emph{$\F$-category} $\bbA$ is a category $\bbA$ enriched in $\F$. This 
	means $\bbA$ has
	\begin{enumerate}
		\item [$\bullet$] objects $x, y, \cdots$;
		\item [$\bullet$] hom-objects $\bbA(x, y)$ in $\F$, each consists 
		of a full embedding $\bbA(x, y)_\tau \hookrightarrow \bbA(x, 
		y)_\lambda$ of the tight part into the loose part.
	\end{enumerate}
	
	We can form a $2$-category $\scrA_\tau$ as follows:
	
	\begin{enumerate}
		\item [$\bullet$] $\scrA_\tau$ has all the objects of $\bbA$;
		\item [$\bullet$] the hom-categories $\scrA_\tau (x, y)$ for any 
		objects $x, y$ are $\bbA(x, y)_\tau$.
	\end{enumerate}
	
	Similarly, we can form a $2$-category $\scrA_\lambda$ by setting the 
	hom-categories $\scrA_\lambda (x, y)$  as $\bbA(x, y)_\lambda$ for any 
	objects $x, y$.
	
	Since for each pair of objects $x, y$, $\scrA_\tau (x, y) \hookrightarrow 
	\scrA_\lambda (x, y)$ is a full-embedding, we obtain a $2$-functor
	
	\begin{align*}
		J_\bbA \colon \scrA_\tau &\to \scrA_\lambda.
	\end{align*}
	
	By construction, $J_\bbA$ is identity-on-object, faithful, and locally 
	fully faithful.
	
	\begin{rk}
		We may identify an $\F$-category $\bbA$ with $J_\bbA$. Indeed, any 
		$2$-functor which is identity-on-object, faithful, and locally 
		fully faithful uniquely determined an $\F$-category.
	\end{rk}
	
	The morphisms in $\scrA_\tau$ are called the \emph{tight} morphisms, 
	whereas 
	those in $\scrA_\lambda$ are called \emph{loose}.
	
	\begin{nota}
		We write $A \to B$ for a tight morphism from $A$ to $B$, and $A 
		\leadsto B$ for a loose morphism from $A$ to $B$.
	\end{nota}
	
	\begin{example}[$2$-categories]
		A $2$-category can be viewed as a \emph{chordate} $\F$-category, 
		namely, all 
		the morphisms are assumed to be tight. On the other hand, a 
		$2$-category can also  be viewed as an \emph{inchordate} $\F$-category, 
		namely, only the identities are tight.
	\end{example}
	
	\begin{example}[$\mathbbm{F}$]
		\label{eg:F}
		Since any monoidal closed category is self-enriched, $\F$ is also 
		self-enriched, and we denote this $\F$-category by $\bbF$.
		
		More precisely, $\bbF$ consists of
		\begin{enumerate}
			\item[$\bullet$] objects the full embeddings $A_\tau 
			\hookrightarrow 
			A_\lambda$;
			\item[$\bullet$] tight morphisms the tightness-preserving functors, 
			depicted as
			\begin{center}
				\begin{tikzcd}
					A_\tau \ar[r, hook, "j_A"] \ar[d, "f_\tau"'] & A_\lambda 
					\ar[d, 
					"f_\lambda"]
					\\
					B_\tau \ar[r, hook, "j_B"'] & B_\lambda 
				\end{tikzcd};
			\end{center}
			\item[$\bullet$] loose morphisms the functors $A_\lambda \to 
			B_\lambda$;
			\item[$\bullet$] $2$-cells the natural transformations between the 
			loose morphisms with tight components, or equivalently, the pair of 
			two natural 
			transformations 
			$\alpha_\tau$ and $\alpha_\lambda$ making the following diagram 
			commute
			\begin{center}
				\begin{tikzcd}
					A_\tau \ar[r, hook, "j_A"] \ar[d, "g_\tau"', bend right] 
					\ar[d, 
					"f_\tau", 
					bend left] \ar[d, near end, phantom, 
					"{\substack{\Leftarrow \\ \alpha_\tau}}"] & A_\lambda 
					\ar[d, 
					"f_\lambda", bend left] \ar[d, "g_\lambda"', bend right] 
					\ar[d, 
					near 
					end, phantom, 
					"{\substack{\Leftarrow \\ \alpha_\lambda}}"]
					\\
					B_\tau \ar[r, hook, "j_B"'] & B_\lambda
				\end{tikzcd},
			\end{center}
			as $j_B$ is fully faithful.
		\end{enumerate}
	\end{example}
	
	\begin{example}[$T$-algebras]
		We can combine $\TAlg_s$and $\TAlg_l$ into an 
		$\F$-category $\FTAlg_{s, l}$. The colax and the pseudo cases 
		are also similar.
	\end{example}
	
	\subsection{\texorpdfstring{$\F$}{F}-functors and 
		\texorpdfstring{$\F$}{F}-natural transformations}
	\label{subsec:F-funs_F-nts}
	
	Let $\bbA$ and $\bbB$ be two $\F$-categories. An \emph{$\F$-functor} 
	$F\colon 
	\bbA \to \bbB$ is a functor enriched in $\F$, which precisely means that 
	$F$ consists 
	of $2$-functors $F_\tau \colon \scrA_\tau \to 
	\scrB_\tau$ and $F_\lambda \colon \scrA_\lambda \to \scrB_\lambda$ 
	making the following diagram commute
	
	\begin{center}
		\begin{tikzcd}
			\scrA_\tau \ar[r, hook, "J_\bbA"] \ar[d, "F_\tau"'] & \scrA_\lambda 
			\ar[d, 
			"F_\lambda"]
			\\
			\scrB_\tau \ar[r, hook, "J_\bbB"'] & \scrB_\lambda
		\end{tikzcd}.
	\end{center}
	
	\begin{rk}
		$F_\tau$ is uniquely determined by $F_\lambda$: an $\F$-functor 
		$F\colon 
		\bbA \to \bbB$ is a $2$-functor $F_\lambda \colon 
		\scrA_\lambda \to \scrB_\lambda$ which preserves tightness, i.e., 
		$F_\lambda$ sends a tight morphism in $\bbA$ to a tight morphism in 
		$\bbB$.
	\end{rk}
	
	When $\bbB = \bbF$, an $\F$-functor $F \colon \bbD \to \bbF$ is called an 
	\emph{$\F$-weight}.	Such an $\F$-weight $\Phi \colon \bbD \to 
	\bbF$ amounts to a triple $(\Phi_\tau, \Phi_\lambda, \varphi)$, where 
	$\Phi_\tau 
	\colon \scrD_\tau \to \Cat$ and $\Phi_\lambda \colon \scrD_\lambda \to 
	\Cat$ are $2$-functors, and $\varphi \colon \Phi_\tau \to \Phi_\lambda 
	J_\bbD$ is a $2$-natural transformation
	\begin{center}
		\begin{tikzcd}
			\scrD_\tau \ar[dr, hook, "J_\bbD"'] \ar[rr, "\Phi_\tau"{name=T}] & & 
			\Cat
			\\
			& \scrD_\lambda \ar[ur, "\Phi_\lambda"'] \ar[Rightarrow, from=T, 
			shorten = 3mm, 
			"\varphi"] &
		\end{tikzcd}
	\end{center}
	such that 
	all the components are full-embeddings.
	
	Let $F, G\colon \bbA \rightrightarrows \bbB$ be two $\F$-functors. An 
	\emph{{$\F$}-natural 
		transformation}	$\alpha \colon F \to G$ consists of $2$-natural 
	transformations $\alpha_\tau \colon F_\tau \to G_\tau$ and 
	$\alpha_\lambda \colon F_\lambda \to G_\lambda$ making the 
	following diagram of $2$-cells commute
	
	\begin{center}
		\begin{tikzcd}
			\scrA_\tau \ar[r, hook, "J_\bbA"] \ar[d, "G_\tau"', bend right] 
			\ar[d, 
			"F_\tau", 
			bend left] \ar[d, near end, phantom, 
			"{\substack{\Leftarrow \\ \alpha_\tau}}"] & \scrA_\lambda \ar[d, 
			"F_\lambda", bend left] \ar[d, "G_\lambda"', bend right] \ar[d, 
			near 
			end, phantom, 
			"{\substack{\Leftarrow \\ \alpha_\lambda}}"]
			\\
			\scrB_\tau \ar[r, hook, "J_\bbB"'] & \scrB_\lambda
		\end{tikzcd}.
	\end{center}
	
	\begin{rk}
		\label{rk:F-nat-tran}
		The existence of $\alpha_\tau$ can be seen as 
		the condition that  $\alpha_\lambda$ has tight components.
	\end{rk}
	
	Let $\Psi = (\Psi_\tau, \Psi_\lambda, \psi)$ be another $\F$-weight. An 
	$\F$-natural transformation $\beta \colon \Phi \to \Psi$ consists of 
	$2$-natural 
	transformations $\beta_\tau \colon \Phi_\tau \to 
	\Psi_\tau$ and 
	$\beta_\lambda \colon \Phi_\lambda \to 
	\Psi_\lambda$ satisfying
	\begin{equation}
		\label{eqt:tight_mor_between_weights}
		\begin{tikzcd}[row sep = huge, column sep = large]
			\scrD_\tau \ar[rr,  "\Phi_\tau"{name=T}] \ar[dr, hook, 
			"J_\bbD"'{name=L}]  & & \Cat
			\\
			& \scrD_\lambda \ar[Rightarrow, from=T, shorten=6mm, "\varphi"] 
			\ar[ur, bend 
			left=15, "{\Phi_\lambda}"{name=M}] 
			\ar[ur, bend right=30, "{\Psi_\lambda}"'{name=R}] 
			\ar[Rightarrow, from=M, to=R, shorten=2.5mm, pos=0.3,
			"\scriptstyle{\beta_\lambda}"] & 
		\end{tikzcd}
		=
		\begin{tikzcd}[row sep = huge, column sep = large]
			\scrD_\tau \ar[rr, "\Phi_\tau"{name=T}] \ar[rr, bend right = 30, 
			pos=0.488, "\Psi_\tau"'{name=M}] \ar[dr, hook, "J_\bbD"'] 
			\ar[Rightarrow, 
			from=T, to=M, shorten = 2mm, "\beta_\tau"] & & \Cat
			\\
			& \scrD_\lambda \ar[ur, bend right = 30, "\Psi_\lambda"'] 
			\ar[Rightarrow, from=M, shorten = 0.5mm, "\psi"] &
		\end{tikzcd},
	\end{equation}
	in other words, $\beta_\lambda J_\bbD \varphi = \psi \beta_\tau$.
	
	\subsection{Functor \texorpdfstring{$\F$}{F}-categories}
	\label{subsec:functor_F-cats}
	
	Let $\bbD$ and $\bbK$ be two $\F$-categories, where $\bbD$ is small, i.e., 
	the objects of $\bbD$ and the morphisms in $\bbD$ both form a set, 
	respectively.
	
	We form a \emph{functor $\F$-category} $[\bbD, \bbK]$ as follows:
	
	\begin{enumerate}
		\item [$\bullet$] $[\bbD, \bbK]$ has objects the $\F$-functors $\bbD 
		\to 
		\bbK$;
		\item [$\bullet$] the tight morphisms in $[\bbD, \bbK]_\tau$ are given 
		by 
		the $\F$-natural transformations between the $\F$-functors;
		\item [$\bullet$] the loose morphisms in $[\bbD, \bbK]_\lambda$ are 
		given 
		by the loose part of the $\F$-natural transformations, i.e.,  
		$2$-natural 
		transformations between the loose parts of the 
		$\F$-functors;
		\item [$\bullet$] the $2$-cells are given by the modifications between 
		the 
		loose part of the $\F$-natural transformations. 
	\end{enumerate}

	\section{Dotted $2$-limits and their equivalence to 
		\texorpdfstring{$\F$}{F}-weighted limits}
	\label{sec:dotted}
	
	The notion of marked limits in $2$-category theory gives another 
	perspective to 
	view $2$-categorical limits. Comparing with the notion of $\Cat$-weighted 
	limits, 
	marked 
	limits give a more spread-out presentation, which is easier to track and 
	realise. This motivates our notion of dotted $2$-limits in enhanced 
	$2$-category 
	theory, which functions in the same manner as marked limits. Dotted $2$-limits often
	provide a cleaner and more convenient expression than $\F$-weighted limits. 
	Just as marked limits are designed exclusively for limits in $2$-category 
	theory, dotted $2$-limits are designed exclusively for limits in enhanced 
	$2$-category theory.
	
	\subsection{Dotted $2$-limits}
	\label{subsec:dotted}
	
	\begin{defi}
		\label{def:marked_F-cat}
		Let $\bbD$ be an $\F$-category. Let $\Sigma$ be a class of morphisms in 
		$\bbD$, which contains all the 
		identities and is closed under composition. The pair $(\bbD, \Sigma)$ 
		is called a \emph{marked $\F$-category}.
	\end{defi}

	\begin{nota}
		We denote by $A \nrightarrow B$ a tight morphism from $A$ to $B$ in 
		$\Sigma$, and by $A \amsnleadsto B$ a loose morphism in $\Sigma$.
	\end{nota}
	
	\begin{defi}
		\label{def:dotted_F-cat}
		Let $(\bbD, \Sigma)$ be a marked $\F$-category. Let $T$ be a 
		collection 
		of objects in $\bbD$ such that if $a \in T$ and there is a 
		tight morphism $a \nrightarrow b$ in $\Sigma$, then $b \in 
		T$. The triple $(\bbD, \Sigma, T)$ 
		is called a \emph{dotted $\F$-category}.
	\end{defi}
	
	\begin{nota}
		For an object $t$ in $T$, we highlight it in diagrams with a dot above: 
		$\dot{t}$.
	\end{nota}
	
	\begin{defi}
		\label{def:dottedtrans}
		Let $(\bbD, \Sigma, T)$ be a dotted $\F$-category. Let $S, R 
		\colon \bbD \rightrightarrows \bbA$ be two $\F$-functors. A 
		\emph{dotted-lax natural transformation}  $\alpha \colon S \to 
		R$ 
		between $S$ and $R$ is a marked-lax natural transformation 
		$\alpha_\lambda \colon 
		S_\lambda \to R_\lambda$ such that and for any ${t} 
		\in T$, 
		the $1$-component $\alpha_{{t}}$ is a tight morphism in $\bbA$.
	\end{defi}

	\begin{rk}
		We also have about \emph{dotted-colax} or 
		\emph{dotted-pseudo} 
		natural transformations.
	\end{rk}
	
	\begin{nota}
		We denote the $\F$-category of $\F$-functors $\bbD \to \bbA$, marked-lax 
		natural 
		transformations between loose parts and dotted-lax natural 
		transformations, 
		and modifications by $[\bbD, \bbA]_{l, \Sigma, T}$.
	\end{nota}
	
	\begin{nota}
		We denote by 
		$\Sigma_\lambda(a, b)$ the full subcategory of $\scrD_\lambda(a, b)$ 
		consisting of loose
		morphisms in $\Sigma$
		that have fixed source $a$ and fixed target $b$, and by $\Sigma_\tau(a, 
		b)$ 
		the full subcategory of $\scrD_\tau(a, b)$ consisting of tight
		morphisms in $\Sigma$
		that have fixed source $a$ and fixed target $b$.
		
		It is clear that we have $2$-categories $\Sigma_\lambda$ and 
		$\Sigma_\tau$, 
		which are wide sub-$2$-categories of $\scrD_\lambda$ and $\scrD_\tau$, 
		respectively, and hence there are inclusions $J_{\Sigma_\lambda} \colon 
		\Sigma_\lambda \hookrightarrow \scrD_\lambda$ and $J_{\Sigma_\tau} 
		\colon 
		\Sigma_\tau \hookrightarrow \scrD_\tau$.
	\end{nota}
	
	\begin{defi}
		Let $(\bbD, \Sigma, T)$ be a small dotted $\F$-category, and 
		$\bbA$ be an $\F$-category. The \emph{dotted-lax limit} 
		$\dotlim{l}{S}$ of an $\F$-functor $S 
		\colon \bbD \to 
		\bbA$ is 
		characterised by an isomorphism
		\begin{equation}
			\label{def:dotlim}
			\bbA(A, \dotlim{l}{S}) \cong [\bbD, \bbA]_{l, 
				\Sigma, T}(\triangle(A), 
			S)
		\end{equation}
		in $\bbF$, which is natural in $A \in \ob\bbA$.
	\end{defi}
	
	\begin{rk}
		By replacing dotted-lax natural transformations with dotted-colax or 
		dotted-pseudo natural transformations, we obtain the notions of 
		dotted-colax or dotted-pseudo limits, respectively.
	\end{rk}
	
	\subsection{Examples of dotted $2$-limits}
	\label{subsec:eg}
	
	To illustrate the convenience and practicality of dotted $2$-limits, we 
	describe several examples of important $\F$-weighted limits in the form of 
	dotted $2$-limits. Many of these $\F$-weighted limits have several variations: 
	the $l$-rigged, $c$-rigged, and $p$-rigged versions, which were first 
	discussed in \longcite{LS:2012}.
	
	In the forthcoming examples, we see that the presentation of the 
	$\F$-categorical limits in dotted $2$-limits are simpler than in $\F$-weighted 
	limits. In fact, the indexing dotted $\F$-categories that we give are 
	almost the 
	same as 
	the indexing $\F$-categories given in \longcite{LS:2012}, except that we 
	are marking some morphisms and 'dotting' some objects. In other words, the 
	$\F$-weights are eliminated completely. The data of the dotted 
	$\F$-categories alone already suffice to capture the 
	$\F$-categorical limits of interests.

	In each of the following examples, we denote by $L$ the corresponding 
	dotted $2$-limit of an $\F$-functor $S \colon \bbD \to \bbA$. Besides, we 
	provide the description of $p$-rigged and $c$-rigged limits in terms of 
	dotted-lax limits, whereas that of $l$-rigged limits in terms of 
	dotted-colax limits, as this formulation facilitates our later discussion 
	on the lifting theorem.
	
	We first investigate the examples such that in their indexing 
	dotted $\F$-categories, $\Sigma$ coincides with $\scrD_\tau$.

	\begin{example}[$w$-rigged inserters]
		\label{eg:l-rigged_inserters}
		
		Recall in \longcite{LS:2012} that an $l$-rigged inserter can 
		be formed with the indexing $\F$-category $$\bbB = \{
		\begin{tikzcd}
			x \ar[r, 
			shift 
			left=1ex, "g"] \ar[r, loose, shift right=1ex, "f"']  & 
			y  
		\end{tikzcd}
		\},$$ where the image under the loose part $\Phi_\lambda$ of the weight 
		$\Phi$ is 
		$$\begin{tikzcd}
			\mathbf{1} \ar[r, 
			shift 
			left=1ex, "0"] \ar[r, shift right=1ex, "1"']  & 
			\mathbf{2}  
		\end{tikzcd},$$ 
		and that under the tight part $\Phi_\tau$ is the identity
		$$\begin{tikzcd}
			\mathbf{1} \ar[r] & \mathbf{1}
		\end{tikzcd},$$ 
		and where $\varphi$ has components at $x$ 
		and $y$ being the 
		identity and $0$, respectively. Therefore, the object $0$ in 
		$\mathbf{2}$ is tight.
		
		Let $$\bbD = \{
		\begin{tikzcd}
			\dot{x} \ar[r, 
			"{\scalebox{0.4}[0.7]{\contour{black}{/}}}"{anchor=center,sloped}, 
			shift 
			left=1ex, "g"] \ar[r, loose, shift right=1ex, "f"']  & 
			\dot{y}  
		\end{tikzcd}
		\},$$ which is indeed the same as $\bbB$, by noticing that 
		$\Sigma$ coincides with $\scrD_\tau$, except that we now set 
		the objects $x$ and $y$ to be dotted.
		
		A dotted-colax natural transformation 
		$\alpha \colon 
		\triangle(L) \to S_\lambda$ clearly corresponds to
		\begin{center}
			\begin{tikzcd}[row sep = tiny, column sep = tiny]
				& Sx \ar[dr, 
				shorten <= -0.2cm, shorten >= -0.2cm,
				"Sg"] 
				\ar[dd, Rightarrow, shorten=3mm, "\alpha_f"]&
				\\
				L \ar[ur, shorten <= -0.2cm, shorten >= -0.2cm, 
				"\alpha_x"] \ar[dr, shorten <= -0.2cm, shorten >= -0.2cm, 
				"\alpha_x"'] & & Sy
				\\
				& Sx \ar[ur, loose, start anchor={[xshift=-0.15cm, 
					yshift=-0.15cm]}, end 
				anchor={[xshift=0.2cm, yshift=0.15cm]}, "Sf"'] 
				&
			\end{tikzcd},
		\end{center}
		an $l$-rigged inserter, as desired.

		Similarly, a $c$-rigged inserter can be described using 
		dotted $2$-limits with the same $\bbD$ for $l$-rigged inserters as above, 
		except that we  
		replace dotted-colax natural transformations with dotted-lax 
		natural transformations. 
		
		Recall in \longcite{LS:2012} that a $p$-rigged inserter can 
		be formed with the indexing $\F$-category $$\bbB = \{
		\begin{tikzcd}
			x \ar[r, loose, 
			shift 
			left=1ex, "g"] \ar[r, loose, shift right=1ex, "f"']  & 
			y  
		\end{tikzcd}
		\},$$ where the image under the loose part $\Phi_\lambda$ of the weight 
		$\Phi$ is 
		$$\begin{tikzcd}
			\mathbf{1} \ar[r, 
			shift 
			left=1ex, "0"] \ar[r, shift right=1ex, "1"']  & 
			\mathbf{2}  
		\end{tikzcd},$$ 
		and that $\Phi_\tau(x) = \mathbf{1}$, and 
		$\Phi_\tau(y) = \emptyset$,
		and where $\varphi$ has components at $x$ 
		and $y$ being the 
		identity and the unique morphism $\emptyset \to \mathbf{2}$, 
		respectively. This implies that no objects in $\mathbf{2}$ are tight.
		
		Now, consider the dotted $\F$-category $$\bbD = \{
		\begin{tikzcd}
			\dot{x} \ar[r, 
			"{\scalebox{0.4}[0.7]{\contour{black}{/}}}"{anchor=center,sloped},
			loose, shift 
			left=1ex, "g"] \ar[r, loose, shift right=1ex, "f"']  & 
			{y}  
		\end{tikzcd}
		\},$$ which is very much the same as $\bbB$, except that $g \in \Sigma$ 
		and 
		$x \in T$. 
		
		A dotted-lax natural transformation 
		$\alpha \colon 
		\triangle(L) \to S_\lambda$ clearly corresponds to
		\begin{center}
			\begin{tikzcd}[row sep = tiny, column sep = tiny]
				& Sx \ar[dr, 
				loose, start anchor={[xshift=-0.15cm, 
					yshift=0.15cm]}, end 
				anchor={[xshift=0.2cm, yshift=-0.15cm]},
				"Sf"] 
				\ar[dd, Rightarrow, shorten=3mm, "\alpha_f"]&
				\\
				L \ar[ur, shorten <= -0.2cm, shorten >= -0.2cm, 
				"\alpha_x"] \ar[dr, shorten <= -0.2cm, shorten >= 
				-0.2cm, 
				"\alpha_x"'] & & Sy
				\\
				& Sx \ar[ur, loose, start anchor={[xshift=-0.15cm, 
					yshift=-0.15cm]}, end 
				anchor={[xshift=0.2cm, yshift=0.15cm]}, "Sg"'] 
				&
			\end{tikzcd},
		\end{center}
		giving a $p$-rigged inserter.
	\end{example}

	\begin{example}[$l$-rigged $l$-descent and $c$-rigged $c$-descent objects]
		\label{eg:l-rigged_l-des}
				%
				%
				%
				%
		We start with the $l$-rigged $l$-descent objects.
		
		Let $\bbD$ be the locally discrete sub-$\F$-category such that
		$\scrD_\lambda$ is generated by
		\begin{center}
			\begin{tikzcd}
				\dot{\mathbf{1}} \ar[r, 
				"{\scalebox{0.4}[0.7]{\contour{black}{/}}}"{anchor=center,sloped},
				shift 
				left=2ex, "\delta_0"] \ar[r,  loose, 
				shift 
				right=2ex, "\delta_1"'] & 
				\dot{\mathbf{2}} \ar[l, 
				"{\scalebox{0.4}[0.7]{\contour{black}{/}}}"{anchor=center,sloped},
				shift 
				right=0.5ex, 
				"\sigma"] \ar[r, 
				"{\scalebox{0.4}[0.7]{\contour{black}{/}}}"{anchor=center,sloped},
				shift 
				left=0.5ex,
				"\delta_1"']
				\ar[r, 
				"{\scalebox{0.4}[0.7]{\contour{black}{/}}}"{anchor=center,sloped},
				shift 
				left=2ex, "\delta_0"] \ar[r,  loose, 
				shift 
				right=2ex, "\delta_2"'] &
				\dot{\mathbf{3}}
			\end{tikzcd},
		\end{center}
		whereas $\scrD_\tau$ is generated by
		\begin{center}
			\begin{tikzcd}
				\dot{\mathbf{1}} \ar[r, 
				"{\scalebox{0.4}[0.7]{\contour{black}{/}}}"{anchor=center,sloped},
				shift 
				left=1ex, "\delta_0"]  & 
				\dot{\mathbf{2}} \ar[l, 
				"{\scalebox{0.4}[0.7]{\contour{black}{/}}}"{anchor=center,sloped},
				shift 
				left=1ex, 
				"\sigma"] \ar[r, 
				"{\scalebox{0.4}[0.7]{\contour{black}{/}}}"{anchor=center,sloped},
				shift 
				right=1ex,
				"\delta_1"']
				\ar[r, 
				"{\scalebox{0.4}[0.7]{\contour{black}{/}}}"{anchor=center,sloped},
				shift 
				left=1ex, "\delta_0"]  &
				\dot{\mathbf{3}}
			\end{tikzcd}.
		\end{center}
		This dotted $\F$-category is indeed the same as the indexing 
		$\F$-category provided in \longcite{LS:2012}, except that we 
		have dotted objects. Note that the corresponding $\F$-weight given 
		there is non-trivial.
		
		Let the image of an $\F$-functor $S \colon \bbD \to \bbA$ takes the form
		\begin{center}
			\begin{tikzcd}
				A_0 \ar[r, 
				shift 
				left=2ex, "\delta_0^A"] \ar[r,  loose, 
				shift 
				right=2ex, "\delta_1^A"'] & 
				A_1 \ar[l, 
				"{\sigma}^A" description] \ar[r, 
				"\delta_1^A"' description]
				\ar[r, 
				shift 
				left=2ex, "\delta_0^A"] \ar[r,  loose, 
				shift 
				right=2ex, "\delta_2^A"'] &
				A_2
			\end{tikzcd},
		\end{center}
		where $A_{i} := S\mathbf{i + 1}$ and $\delta_i^A := S\delta_i$ for $i 
		= 0, 1, 2$, and $\sigma^A := S\sigma$.
		Then a dotted-colax natural transformation $\alpha \colon \triangle(L) 
		\to S_\lambda$ actually gives
		\begin{center}
			\begin{tikzcd}[row sep = tiny, column sep = tiny]
				& A_0 \ar[dr, shorten <= -0.2cm, shorten >= -0.2cm,
				"\delta_0^A"] 
				\ar[dd, Rightarrow, shorten=3mm, "\alpha_{\delta_1}"]&
				\\
				L \ar[ur, shorten <= -0.2cm, shorten >= -0.2cm, 
				"a_0"] \ar[dr, shorten <= -0.2cm, shorten >= -0.2cm, 
				"a_0"'] & & A_1
				\\
				& A_0 \ar[ur,  loose, start anchor={[xshift=-0.15cm, 
					yshift=-0.15cm]}, end 
				anchor={[xshift=0.2cm, yshift=0.15cm]}, "\delta_1^A"'] 
				&
			\end{tikzcd},
		\end{center}
		that satisfies the equations for $l$-descent objects,
		where all the projections $A \xrightarrow{a_i} A_i$ 
		are	tight because $\mathbf{1}$, $\mathbf{2}$ and $\mathbf{3}$ are 
		all in $T$.
		
		Since 
		$\delta_0^A$'s are already tight, this means that the 
		projection
		$a_0$ is tight and detects tightness.
		
		Now for $c$-rigged $c$-descent objects, we again adapt 
		the same dotted $\F$-category $\bbD$ as above, and consider
		dotted-lax natural transformations instead.
	\end{example}
	
	\begin{example}[$w$-rigged equifiers]
		\label{eg:l-rigged_equifiers}
		We start with the $l$-rigged equifiers.
		
		Let $$\bbD = \{
		\begin{tikzcd}
			\dot{x} \ar[r, 
			"{\scalebox{0.4}[0.7]{\contour{black}{/}}}"{anchor=center,sloped}, 
			start 
			anchor={[xshift=-0.15cm, 
				yshift=-0.15cm]}, end 
			anchor={[xshift=0.15cm, yshift=-0.15cm]}, shift 
			left=1ex, bend left, "f"{name=T}] \ar[r, 
			loose,  
			start anchor={[xshift=-0.15cm, 
				yshift=0.15cm]}, end 
			anchor={[xshift=0.15cm, yshift=0.15cm]},
			shift right=1ex, bend right, "g"'{name=B}]  & 
			\dot{y}  \arrow[Rightarrow, shift 
			left=1ex, shorten=2mm, from=T, to=B, "\beta"] \arrow[Rightarrow, 
			shift 
			right=1ex, shorten=2mm, from=T, to=B, "\alpha"']
		\end{tikzcd}
		\},$$ which again is the same as the $\F$-category used to 
		describe $l$-rigged equifiers by $\F$-weighted limits, 
		except that we have dotted objects.
		
		Let $\gamma \colon \triangle(L) \to S_\lambda$ be a dotted-colax natural transformation.
		
		By the naturality of $\gamma$, we have
		\begin{equation*}
			\begin{tikzcd}[row sep = normal, column sep = small]
				& L \ar[dl, "\gamma_x"'] \ar[dr, 
				"\gamma_y"{name=R}]  &
				\\
				Sx \ar[rr, start anchor={[xshift=-0.15cm, 
					yshift=-0.15cm]}, end 
				anchor={[xshift=0.15cm, yshift=-0.15cm]},  bend left, 
				"Sf"{name=T}] \ar[rr, loose, start 
				anchor={[xshift=-0.15cm, 
					yshift=0.15cm]}, end 
				anchor={[xshift=0.15cm, yshift=0.15cm]},
				bend right, "Sg"'{name=B}] \ar[Rightarrow, 
				shorten=2mm, 
				"S\alpha", from=T, to=B]  & & Sy
			\end{tikzcd}
			=
			\begin{tikzcd}[row sep = normal, column sep = small]
				& L \ar[dl, "\gamma_x"'] \ar[dr, 
				"\gamma_y"{name=R}]  &
				\\
				Sx \ar[rr, loose, bend right, "Sg"']  \arrow[Rightarrow,  
				shorten=5mm, 
				from=R, 
				"\gamma_g"] & & Sy
			\end{tikzcd}
			=
			\begin{tikzcd}[row sep = normal, column sep = small]
				& L \ar[dl, "\gamma_x"'] \ar[dr, 
				"\gamma_y"{name=R}]  &
				\\
				Sx \ar[rr, start anchor={[xshift=-0.15cm, 
					yshift=-0.15cm]}, end 
				anchor={[xshift=0.15cm, yshift=-0.15cm]},  bend left, 
				"Sf"{name=T}] \ar[rr, loose, start 
				anchor={[xshift=-0.15cm, 
					yshift=0.15cm]}, end 
				anchor={[xshift=0.15cm, yshift=0.15cm]},
				bend right, "Sg"'{name=B}] \ar[Rightarrow, 
				shorten=2mm, 
				"S\beta", from=T, to=B]  & & Sy
			\end{tikzcd}.
		\end{equation*}
		So the above data assemble to 
		the equifier in $\scrA_\lambda$:
		\begin{center}
			\begin{tikzcd}
				L \ar[r, "\gamma_x"] & {Sx} \ar[r, start 
				anchor={[xshift=-0.15cm, 
					yshift=-0.15cm]}, end 
				anchor={[xshift=0.15cm, yshift=-0.15cm]}, shift 
				left=1ex, bend left, "Sf"{name=T}] \ar[r, loose,  
				start anchor={[xshift=-0.15cm, 
					yshift=0.15cm]}, end 
				anchor={[xshift=0.15cm, yshift=0.15cm]},
				shift right=1ex, bend right, "Sg"'{name=B}]   & 
				{Sy}  \arrow[Rightarrow, shift 
				left=1ex, shorten=2.5mm, from=T, to=B, "\scriptstyle{S\beta}"] 
				\arrow[Rightarrow, 
				shift 
				right=1ex, shorten=2.5mm, from=T, to=B, 
				"\scriptstyle{S\alpha}"']
			\end{tikzcd}.
		\end{center}
		In addition, the projections $\gamma_x$ and $\gamma_y$ are tight, and 
		$\gamma_x$ detects tightness.
		
		For $c$-rigged equifiers, consider this time when $$\bbD = \{
		\begin{tikzcd}
			\dot{x} \ar[r, 
			loose, start 
			anchor={[xshift=-0.15cm, 
				yshift=-0.15cm]}, end 
			anchor={[xshift=0.15cm, yshift=-0.15cm]}, shift 
			left=1ex, bend left, "f"{name=T}] \ar[r, 
			"{\scalebox{0.4}[0.7]{\contour{black}{/}}}"{anchor=center,sloped},
			start anchor={[xshift=-0.15cm, 
				yshift=0.15cm]}, end 
			anchor={[xshift=0.15cm, yshift=0.15cm]},
			shift right=1ex, bend right, "g"'{name=B}]  & 
			\dot{y}  \arrow[Rightarrow, shift 
			left=1ex, shorten=2mm, from=T, to=B, "\beta"] 
			\arrow[Rightarrow, 
			shift 
			right=1ex, shorten=2mm, from=T, to=B, "\alpha"']
		\end{tikzcd}
		\}.$$ This is also the same as the $\F$-category used in 
		describing the limit as an $\F$-weighted limit.
		
		Now, a dotted-lax natural transformation  $\gamma \colon 
		\triangle(L) \to S_\lambda$ gives our desired $c$-rigged equifiers.
		
		For $p$-rigged equifiers, let $$\bbD = \{
		\begin{tikzcd}
			\dot{x} \ar[r, loose, start 
			anchor={[xshift=-0.15cm, 
				yshift=-0.15cm]}, end 
			anchor={[xshift=0.15cm, yshift=-0.15cm]}, shift 
			left=1ex, bend left, "f"{name=T}] \ar[r, 
			"{\scalebox{0.4}[0.7]{\contour{black}{/}}}"{anchor=center,sloped}, 
			loose,  
			start anchor={[xshift=-0.15cm, 
				yshift=0.15cm]}, end 
			anchor={[xshift=0.15cm, yshift=0.15cm]},
			shift right=1ex, bend right, "g"'{name=B}]  & 
			{y}  \arrow[Rightarrow, shift 
			left=1ex, shorten=2mm, from=T, to=B, "\beta"] \arrow[Rightarrow, 
			shift 
			right=1ex, shorten=2mm, from=T, to=B, "\alpha"']
		\end{tikzcd}
		\}.$$Again, the only difference between the $\F$-category given in 
		\longcite{LS:2012} and this dotted $\F$-category is that $g \in \Sigma$.
		
		A dotted lax-natural transformation  $\gamma \colon 
		\triangle(L) \to S_\lambda$ gives 
		the equifier of $S_\lambda$ in $\scrA_\lambda$:
		\begin{center}
			\begin{tikzcd}
				L \ar[r, "\gamma_x"] & {Sx} \ar[r, loose, start 
				anchor={[xshift=-0.15cm, 
					yshift=-0.15cm]}, end 
				anchor={[xshift=0.15cm, yshift=-0.15cm]}, shift 
				left=1ex, bend left, "Sf"{name=T}] \ar[r, loose,  
				start anchor={[xshift=-0.15cm, 
					yshift=0.15cm]}, end 
				anchor={[xshift=0.15cm, yshift=0.15cm]},
				shift right=1ex, bend right, "Sg"'{name=B}]   & 
				{Sy}  \arrow[Rightarrow, shift 
				left=1ex, shorten=2.5mm, from=T, to=B, "\scriptstyle{S\beta}"] 
				\arrow[Rightarrow, 
				shift 
				right=1ex, shorten=2.5mm, from=T, to=B, 
				"\scriptstyle{S\alpha}"']
			\end{tikzcd}.
		\end{center}
		The	projection $\gamma_x$ is tight and detects tightness.
	\end{example}

	\begin{example}[Alternating projective limits]
		\label{eg:alt}
		This is an example of a limit which is not PIE but lifts to the 
		$\F$-category of algebras, as shown in \cite[Example 6.8]{LS:2012}.
		
		Consider the opposite poset of natural numbers 
		\begin{center}
			\begin{tikzcd}
				\dot{1} & 2 \ar[l, 
				"{\scalebox{0.4}[0.7]{\contour{black}{/}}}"{anchor=center,sloped}]
				
				& 
				\dot{3} \ar[l, loose, 
				"{\scalebox{0.4}[0.7]{\contour{black}{/}}}"{anchor=center,sloped}]
				
				& 4 
				\ar[l, 
				"{\scalebox{0.4}[0.7]{\contour{black}{/}}}"{anchor=center,sloped}]
				
				\cdots
			\end{tikzcd},
		\end{center}
		and denote this $\F$-category by $\bbD$, where all the odd numbers are 
		in 
		$T$, all the morphisms are in $\Sigma$, and the unique morphism $n 
		\amsnleadsto m$ is tight precisely when it is the identity or when $n$ 
		is 
		even 
		but $m$ is odd.
		
		A dotted-lax natural transformation $\alpha \colon \triangle(L) 
		\to S_\lambda$ gives
		\begin{center}
			\begin{tikzcd}
				& & L \ar[dl, loose] \ar[d] \ar[dr, loose] \ar[drr]& &
				\\
				{\cdots} \ar[r, loose]	& {S(4)}  \ar[r]& {S(3)} \ar[r, loose] 
				& {S(2)} 
				\ar[r] & {S(1)}
			\end{tikzcd},
		\end{center}
		which is the projective limit of $S_\lambda$ in $\scrA_\lambda$, in 
		addition, the projections at odd numbers are tight.
		
		The projections at odd 
		numbers are tight and detects tightness.
	\end{example}
	
	We will discuss more on the above examples in 
	\longref{Section}{sec:discuss}.
	
	\subsection{The equivalence between \texorpdfstring{$\F$}{F}-limits and 
		dotted 
		limits}
	\label{subsec:F-equiv}
	
	Motivated by the $2$-categorical case, we expect that dotted $2$-limits are as 
	expressive 
	as 
	$\F$-limits. Our main goal in this section is to show that the two notions 
	are equivalent.
	
	\begin{nota}
		In this section, we denote a surjective-on-objects functor by 
		$\twoheadrightarrow$, and a full embedding by $\rightarrowtail$.
	\end{nota}
	
	\begin{rk}
		\label{rk:fs}
		Indeed, surjective-on-objects functors and full embeddings form an 
		orthogonal factorisation system on $\Cat$: 
		
		For any arbitrary functor $F \colon A \to B$, the \emph{embedded image} 
		$\widetilde{\im}F$ of $F$ is the full subcategory of $B$ whose objects 
		are in the image of $F$.
		
		Now, it is clear that we have a factorisation 
		\begin{center}
			\begin{tikzcd}
				A \ar[rr, "F"] \ar[dr, two heads, "E"'] & & 
				B
				\\
				& \widetilde{\im}F \ar[ur, tail, "M"'] &
			\end{tikzcd}
		\end{center}
		of $F$ through $\widetilde{\im}F$, where by default $E$ is a 
		surjective-on-objects functor, and $M$ is a full embedding.
		
		The uniqueness of lifts of surjective-on-objects functors against full 
		embeddings is straightforward.
	\end{rk}
	
	We first show that any dotted-lax limit has the same universal property as 
	an 
	$\F$-weighted limit.
	
	Similar to the $2$-categorical case in section \longref{}{subsec:2-equiv}, 
	we aim 
	to 
	construct a left adjoint 
	$()^\# \colon [\bbD, \bbF]_{l, \Sigma, 
		T} 
	\to [\bbD, \bbF]$ to the inclusion $\F$-functor, whose universal property 
	is given by
	\begin{equation}
		\label{eqt:F_adj_iso}
		[\bbD, \bbF]_{l, 
			\Sigma, T}(F, G) \cong [\bbD, 
		\bbF](F^\#, G).
	\end{equation}
	With this left adjoint $()^\#$, we can then deduce that for any 
	$\F$-functor $S \colon 
	\bbD \to \bbA$ and an object $A \in \bbA$,
	\begin{align*}
		[\bbD, \bbA]_{l, \Sigma, T}(\triangle(A), S-) &\cong 
		[\bbD, \bbF]_{l, 
			\Sigma, T}(\triangle(\mathbf{1}), \bbA(A, S-)) 
		\\
		&\cong [\bbD, 
		\bbF]({\triangle(\mathbf{1})}^\#, \bbA(A, S-)),
	\end{align*}
	as desired.	Indeed, from \longref{Proposition}{pro:sigma_left_adj}, we 
	obtain 
	a $2$-categorical left 
	adjoint $()^\#_\lambda := ()^\ddagger \colon [\scrD_\lambda, \Cat]_{l, 
		\Sigma} 
	\to [\scrD_\lambda, \Cat]$, and so we already have
	\begin{equation*}
		[\scrD_\lambda, \Cat]_{l, 
			\Sigma}(F_\lambda, G_\lambda) \cong 
		[\scrD_\lambda, \Cat](F^\#_\lambda, G_\lambda),
	\end{equation*}
	which gives exactly the loose part of our desired isomorphism 
	\eqtref{}{eqt:F_adj_iso}. That means 
	our goal is to construct, for any $\F$-weight $(F_\tau, F_\lambda, 
	\theta)$, a $2$-functor $F^\#_\tau \colon \scrD_\tau \to \Cat$ such that 
	$(F^\#_\tau, F^\#_\lambda, \varphi)$ is an $\F$-weight, and that 
	\eqtref{}{eqt:F_adj_iso} is fulfilled.
	
	\begin{construct}
		\label{constr:tight_part_left_adj}
		Let $\bbD = (\bbD, \Sigma, T)$ be a small dotted $\F$-category. Let $F 
		= (F_\tau, F_\lambda, \theta \colon F_\tau \to 
		F_\lambda J_\bbD)$ be an $\F$-weight.
		
		We view $T$ as a full sub-$2$-category of $\Sigma_\tau$, i.e., $T$ is a 
		$2$-category where every morphism is tight and is in $\Sigma$. Denote 
		by 
		$J^{\scrD_\tau}_T \colon T \hookrightarrow \Sigma_\tau \hookrightarrow 
		\scrD_\tau$ the inclusion of $T$ into $\scrD_\tau$.
		
		Since $\Cat$ is cocomplete and $T$ is small, the left Kan extension $L 
		:= {\Lan[F_\tau 
			J^{\scrD_\tau}_T]{J^{\scrD_\tau}_T}}$ of $F_\tau 
		J^{\scrD_\tau}_T$ along $J^{\scrD_\tau}_T$ 
		\begin{center}
			\begin{tikzcd}
				T \ar[r, hook, "J^{\scrD_\tau}_T"] \ar[dr, hook, 
				"J^{\scrD_\tau}_T"'] 
				& \scrD_\tau \ar[r, "F_\tau"] \ar[Rightarrow, dashed 
				,shorten=1.5mm, 
				to=2-2, "\pi"] & \Cat
				\\
				& \scrD_\tau \ar[ur, dashed, "L"']
			\end{tikzcd}
		\end{center}
		exists. On the other hand, 
		we have $2$-natural transformations $\theta \cdot J^{\scrD_\tau}_T 
		\colon F_\tau J^{\scrD_\tau}_T \to F_\lambda J_\bbD J^{\scrD_\tau}_T$ 
		and $\eta_F \cdot J_\bbD \cdot J^{\scrD_\tau}_T$, where $\eta_F \colon 
		F_\lambda \to F^\#_\lambda$ denotes the component of the unit of the 
		$2$-categorical adjunction in 
		\longref{Proposition}{pro:sigma_left_adj}. 
		Therefore, we have a composite
		\begin{center}
			\begin{tikzcd}[row sep = large, column sep = large]
				T \ar[r, hook, "J^{\scrD_\tau}_T"] \ar[dr, hook, 
				"J^{\scrD_\tau}_T"'{name=L}]  & \scrD_\tau \ar[r, "F_\tau"] 
				\ar[Rightarrow, to=2-2, shorten=3mm, "\scriptstyle{\theta 
					\cdot 
					J^{\scrD_\tau}_T }"'] & \Cat
				\\
				& \scrD_\tau \ar[ur, bend left=15, "\scriptstyle{F_\lambda 
					J_\bbD}"{name=M}] 
				\ar[ur, bend right=30, "\scriptstyle{F^\#_\lambda 
					J_\bbD}"'{name=R}] 
				\ar[Rightarrow, from=M, to=R, shorten=3mm, pos=0.3,
				"\scriptscriptstyle{\eta_F 
					\cdot 
					J_\bbD}"] & 
			\end{tikzcd}
		\end{center}
		of $2$-natural transformations, and by the universal property of the 
		left Kan extension, there exists a unique $2$-natural transformation $l 
		\colon L \to F^\#_\lambda J_\bbD$ such that 
		\begin{equation*}
			\begin{tikzcd}[row sep = large, column sep = large]
				T \ar[r, hook, "J^{\scrD_\tau}_T"] \ar[dr, hook, 
				"J^{\scrD_\tau}_T"'{name=L}]  & \scrD_\tau \ar[r, "F_\tau"] 
				\ar[Rightarrow, to=2-2, shorten=3mm, "\scriptstyle{\theta 
					\cdot 
					J^{\scrD_\tau}_T }"'] & \Cat
				\\
				& \scrD_\tau \ar[ur, bend left=15, "\scriptstyle{F_\lambda 
					J_\bbD}"{name=M}] 
				\ar[ur, bend right=30, "\scriptstyle{F^\#_\lambda 
					J_\bbD}"'{name=R}] 
				\ar[Rightarrow, from=M, to=R, shorten=3mm, pos=0.3,
				"\scriptscriptstyle{\eta_F 
					\cdot 
					J_\bbD}"] & 
			\end{tikzcd}
			=
			\begin{tikzcd}[row sep = large, column sep = large]
				T \ar[r, hook, "J^{\scrD_\tau}_T"] \ar[dr, hook, 
				"J^{\scrD_\tau}_T"'{name=L}]  & \scrD_\tau \ar[r, "F_\tau"] 
				\ar[Rightarrow, to=2-2, shorten=3mm, "{\pi}"'] & 
				\Cat
				\\
				& \scrD_\tau \ar[ur, bend left=15, "{L}"{name=M}] 
				\ar[ur, bend right=30, "{F^\#_\lambda 
					J_\bbD}"'{name=R}] 
				\ar[Rightarrow, from=M, to=R, shorten=3mm, dashed,
				"{\exists!l}"] & 
			\end{tikzcd},
		\end{equation*}
		that is to say, $(l \cdot J^{\scrD_\tau}_T) \circ \pi = 
		(\eta_F 
		\cdot 
		J_\bbD \cdot J^{\scrD_\tau}_T) \circ (\theta \cdot J^{\scrD_\tau}_T)$. 
		As a result, for each object $d$ in $\bbD$, we have a functor $l_d 
		\colon Ld \to F^\#_\lambda d$.
		
		Nevertheless, this is not yet the end of our story, as $l_d$ is not 
		necessarily a full embedding. To resolve the issue, let us consider 
		the embedded image $\widetilde{\im}l_d$. Following 
		\longref{Remark}{rk:fs}, we have the factorisation 
		\begin{center}
			\begin{tikzcd}
				Ld \ar[rr, "l_d"] \ar[dr, two heads, "p_d"'] & & 
				{{F^\#_\lambda} d}
				\\
				& \widetilde{L}d \ar[ur, tail, "\widetilde{l}_d"'] &
			\end{tikzcd}
		\end{center}
		of $l_d$ through $\widetilde{L}d$, where by default $p_d$ is a 
		surjective-on-objects functor, and $\widetilde{l}_d$ is a full 
		embedding. Using the orthogonality of surjective-on-objects functors 
		and full embeddings, $\widetilde{L}$ uniquely extends to a $2$-functor 
		such that $p_d$ and $\widetilde{l}_d$ are the $1$-components of the 
		$2$-natural transformations $p \colon L \to \widetilde{L}$ and 
		$\widetilde{l} \colon \widetilde{L} \to F^\#_\lambda J_\bbD$, 
		respectively.
		
		By setting $F^\#_\tau := \widetilde{L}$ and $\varphi := \widetilde{l}$, we 
		finish our construction of the $\F$-weight $F^\# = (F^\#_\tau, 
		F^\#_\lambda, \varphi)$.
	\end{construct}
	
	Now, let $G = (G_\tau, G_\lambda, \gamma \colon G_\tau \to G_\lambda 
	J_\bbD)$ be another $\F$-weight. Since $J_\bbA$ is locally fully 
	faithful, if we view $T$ as a full 
	sub-$2$-category of $\Sigma_\tau$, then the condition that the 
	$1$-component $\alpha_{{t}}$ is tight for $t \in T$ can be seen as 
	the existence of a $2$-natural transformation $\alpha_\delta \colon 
	F_\tau J^{\scrD_\tau}_T \to G_\tau J^{\scrD_\tau}_T$, satisfying 
	\eqtref{}{eqt:tight_mor_between_weights}: 
	\begin{equation}
		\label{eqt:alpha_delta}
		(\alpha_\lambda \cdot J_\bbD J^{\scrD_\tau}_T) \circ (\theta \cdot 
		J^{\scrD_\tau}_T) = (\gamma \cdot J^{\scrD_\tau}_T) \circ \alpha_\delta.
	\end{equation}
	More precisely, a dotted-lax natural transformation 
	$\alpha \colon F \to G$ in $[\bbD, \bbF]_{l, \Sigma}(F, G)$ consists of
	\begin{enumerate}
		\item[$\bullet$] a lax natural transformation $\alpha_\lambda \colon 
		F_\lambda \to G_\lambda$ between the loose parts;
		\item[$\bullet$] a $2$-natural transformation $\alpha_\lambda \cdot 
		J_{\Sigma_\lambda} \colon F_\lambda J_{\Sigma_\lambda} \to G_\lambda 
		J_{\Sigma_\lambda}$;
		\item[$\bullet$] a $2$-natural transformation $\alpha_\delta \colon 
		F_\tau J^{\scrD_\tau}_T \to G_\tau J^{\scrD_\tau}_T$, 
	\end{enumerate} 
	that fulfill \eqtref{}{eqt:alpha_delta}, i.e.,
	\begin{equation*}
		\begin{tikzcd}[row sep = large, column sep = large]
			T \ar[r, hook, "J^{\scrD_\tau}_T"] \ar[dr, hook, 
			"J^{\scrD_\tau}_T"'{name=L}]  & \scrD_\tau \ar[r, "F_\tau"] 
			\ar[Rightarrow, to=2-2, shorten=3mm, "\scriptstyle{\theta 
				\cdot 
				J^{\scrD_\tau}_T }"'] & \Cat
			\\
			& \scrD_\tau \ar[ur, bend left=15, "\scriptstyle{F_\lambda 
				J_\bbD}"{name=M}] 
			\ar[ur, bend right=30, "\scriptstyle{G_\lambda 
				J_\bbD}"'{name=R}] 
			\ar[Rightarrow, from=M, to=R, shorten=3.5mm, pos=0.3,
			"\scriptscriptstyle{\alpha_\lambda 
				\cdot 
				J_\bbD}"] & 
		\end{tikzcd}
		=
		\begin{tikzcd}[row sep = large, column sep = large]
			T \ar[r, hook, "J^{\scrD_\tau}_T"] \ar[dr, hook, 
			"J^{\scrD_\tau}_T"'{name=L}]  & \scrD_\tau \ar[r, "F_\tau"] 
			\ar[Rightarrow, to=2-2, shorten=3mm, 
			"{\alpha_\delta}"'] & \Cat
			\\
			& \scrD_\tau \ar[ur, bend left=15, "{G_\tau}"{name=M}] 
			\ar[ur, bend right=30, "{G_\lambda 
				J_\bbD}"'{name=R}] 
			\ar[Rightarrow, from=M, to=R, shorten=3mm, 
			"{\gamma}"] & 
		\end{tikzcd},
	\end{equation*}
	here $\alpha_\lambda \cdot J_\bbD J^{\scrD_\tau}_T \colon F_\lambda J_\bbD 
	J^{\scrD_\tau}_T \to G_\lambda J_\bbD J^{\scrD_\tau}_T$ is a $2$-natural 
	transformation.
	
	In other words, our goal is to show that $\alpha \colon F \to G$ 
	corresponds bijectively to an $\F$-natural transformation $\beta \colon 
	F^\# \to G$, or equivalently, a pair of $2$-natural transformations 
	$\beta_\lambda \colon F^\#_\lambda \to G_\lambda$ and $\beta_\tau \colon 
	F^\#_\tau \to G_\tau$, satisfying \eqtref{}{eqt:tight_mor_between_weights}. 
	Since $\beta_\lambda \colon F^\#_\lambda \to G_\lambda$ is obtained 
	immediately from the $2$-categorical adjunction in 
	\longref{Proposition}{pro:sigma_left_adj}, our task is now reduced to 
	finding 
	an appropriate $2$-natural transformation $\beta_\tau \colon F^\#_\tau \to 
	G_\tau$.
	
	To begin with, we check that in the 
	$2$-categorical adjunction, any unit component and the 
	transposes 
	extend from marked-lax natural 
	transformations to dotted-lax natural transformations.
	
	\begin{pro}
		\label{pro:unit_dotted}
		The component $\eta_F \colon F_\lambda \to F^\#_\lambda$ of the 
		unit of the adjunction in \longref{Proposition}{pro:sigma_left_adj} is 
		a 
		dotted-lax natural transformation.
		
		In addition, given an $\F$-natural transformation $\beta \colon F^\# 
		\to G$, its transpose obtained by pre-composing with the unit $\eta_F$ 
		is also a dotted-lax natural transformation.
	\end{pro}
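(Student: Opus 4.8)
The plan is to handle both assertions by reducing each to a single tightness check on the $1$-components indexed by $T$, since the loose parts are supplied for free by the $2$-categorical adjunction of \longref{Proposition}{pro:sigma_left_adj}. Recall from \longref{Definition}{def:dottedtrans} that a marked-lax natural transformation of $\F$-weights is dotted-lax exactly when, for every $t \in T$, its $1$-component is tight, and from \longref{Example}{eg:F} that a morphism of $\bbF$ is tight precisely when its underlying functor carries tight objects to tight objects (the behaviour on morphisms then being forced by fullness of the comparison embeddings). Thus in both parts the loose transformation is already marked-lax by the $2$-categorical transposition, and only the tightness at objects of $T$ needs argument.

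For the first claim, $\eta_F \colon F_\lambda \to F^\#_\lambda$ is marked-lax because it is the component at $F_\lambda$ of the unit of the adjunction of \longref{Proposition}{pro:sigma_left_adj}, which lives over $[\scrD_\lambda, \Cat]_{l, \Sigma}$. To see that $(\eta_F)_t$ is tight for $t \in T$, I would invoke the defining property of the comparison $l \colon L \to F^\#_\lambda J_\bbD$ from \longref{Construction}{constr:tight_part_left_adj}, namely $(l \cdot J^{\scrD_\tau}_T) \circ \pi = (\eta_F \cdot J_\bbD \cdot J^{\scrD_\tau}_T) \circ (\theta \cdot J^{\scrD_\tau}_T)$. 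Evaluating at $t \in T$ and an object $x$ of $F_\tau t$ yields $(\eta_F)_t(\theta_t x) = l_t(\pi_t x)$, so $(\eta_F)_t$ sends every tight object of $F_\lambda t$ into the image of $l_t$. Since $\varphi = \widetilde{l}$ and $\widetilde{l}_t \colon \widetilde{L}t \rightarrowtail F^\#_\lambda t$ is the full embedding onto the embedded image $\widetilde{\im}l_t$, the tight objects of $F^\#_\lambda t$ are exactly those in the image of $l_t$; hence tight objects of $F_\lambda t$ land in tight objects of $F^\#_\lambda t$, so $(\eta_F)_t$ is tight and $\eta_F$ is dotted-lax.

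For the second claim, the transpose of an $\F$-natural transformation $\beta \colon F^\# \to G$ obtained by pre-composing with $\eta_F$ has loose part the composite $\beta_\lambda \circ \eta_F \colon F_\lambda \to G_\lambda$, which is marked-lax because it is the transpose across the adjunction of \longref{Proposition}{pro:sigma_left_adj}. Its $1$-component at $t \in T$ is $(\beta_\lambda)_t \circ (\eta_F)_t$. I have just shown $(\eta_F)_t$ is tight, while $(\beta_\lambda)_t$ is tight because $\beta$ is $\F$-natural, so $\beta_\lambda$ has tight components by \longref{Remark}{rk:F-nat-tran}. As tightness-preservation is closed under composition, the $1$-component is tight, and the transpose is therefore dotted-lax.

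The only genuinely non-formal step is the tightness of $(\eta_F)_t$ in the first claim: everything else is bookkeeping across the adjunction, whereas this step forces one to unwind the embedded-image construction of $F^\#_\tau$ and to use that the Kan-extension comparison $l$ factors the unit over the dotted objects. The remaining naturality and coherence conditions are inherited verbatim from the $2$-categorical situation of \longref{Proposition}{pro:sigma_left_adj}.
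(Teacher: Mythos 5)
Your proposal is correct and follows essentially the same route as the paper: both reduce each claim to tightness of the $1$-components at objects of $T$, establish tightness of $(\eta_F)_t$ from the defining equation $(l \cdot J^{\scrD_\tau}_T) \circ \pi = (\eta_F \cdot J_\bbD \cdot J^{\scrD_\tau}_T) \circ (\theta \cdot J^{\scrD_\tau}_T)$ together with the embedded-image factorisation $l = \varphi \circ p$, and then obtain the second claim by composing with the tight components of $\beta$ guaranteed by $\F$-naturality. The only cosmetic difference is that you argue object-wise (tight objects land in the image of $l_t$, hence in the tight part) where the paper exhibits the commuting square with tight part $p_t\pi_t$; these are the same argument.
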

	
	\begin{proof}
		It suffices to show that for any $t \in T$, ${\eta_F}_t$ is a tight 
		morphism in $\bbF$, i.e., a functor preserving tight parts.
		
		From the construction of $l = \varphi \circ p$ in 
		\longref{Construction}{constr:tight_part_left_adj}, we have $((\varphi 
		\circ p) \cdot J^{\scrD_\tau}_T) \circ \pi = (l \cdot J^{\scrD_\tau}_T) 
		\circ \pi = 
		(\eta_F 
		\cdot 
		J_\bbD \cdot J^{\scrD_\tau}_T) \circ (\theta \cdot J^{\scrD_\tau}_T)$. 
		To wit, we have a square
		\begin{center}
			\begin{tikzcd}
				F_\tau t \ar[rr, tail, "\theta_t"] \ar[d, "\pi_t"']& & 
				F_\lambda 
				t \ar[dd, "\eta_F t"]
				\\
				Lt \ar[d, "p_t"'] &
				\\
				F^\#_\tau t \ar[rr, tail, "\varphi_t"'] && F^\#_\lambda t
			\end{tikzcd},
		\end{center}
		which means $\eta_F$ preserves tight parts.
		
		Now, since ${\beta_\tau}_t$ is clearly tight for all $t \in T$, $\beta$ 
		is always a dotted-lax natural transformation. Therefore, the 
		composition $\beta \circ \eta_F$ is also a dotted-lax natural 
		transformation.
	\end{proof}
	
	We attain our main theorem as follows.
	
	\begin{pro}
		\label{pro:precomp_by_unit_is_iso}
		There is an isomorphism of categories
		\begin{equation*}
			[\bbD, \bbF]_{l, \Sigma, T}(F, G) \overset{{\eta_F}^*}{\cong} 
			[\bbD, \bbF](F^\#, G)
		\end{equation*}
		given by the pre-composition by $\eta_F$, natural in $F$ and $G$.
		
		In other words, there is a left adjoint $()^\# \colon [\bbD, \bbF]_{l, 
			\Sigma, T} 
		\to [\bbD, \bbF]$ to the inclusion. 
	\end{pro}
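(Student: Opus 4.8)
The plan is to show that the pre-composition functor $\eta_F^*$, which is well defined and lands in dotted-lax natural transformations by \longref{Proposition}{pro:unit_dotted}, which is automatically functorial and natural in $G$, and whose naturality in $F$ is inherited from the naturality of \longref{Construction}{constr:tight_part_left_adj}, is an isomorphism. I would organise both sides by separating the \emph{loose} part from the \emph{tight} (that is, $T$-indexed) part. Recall that an $\F$-natural transformation $\beta\colon F^\#\to G$ is a pair $(\beta_\lambda,\beta_\tau)$ subject to \eqtref{Equation}{eqt:tight_mor_between_weights} (with the structure maps $\varphi$ of $F^\#$ and $\gamma$ of $G$), whereas a dotted-lax natural transformation $\alpha\colon F\to G$ is a marked-lax $\alpha_\lambda$ together with a $2$-natural $\alpha_\delta\colon F_\tau J^{\scrD_\tau}_T\to G_\tau J^{\scrD_\tau}_T$ subject to \eqtref{Equation}{eqt:alpha_delta}; under $\eta_F^*$ one always has $\alpha_\lambda=\beta_\lambda\circ\eta_F$. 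For the loose parts, \longref{Proposition}{pro:sigma_left_adj}, through the loose adjunction $()^\#_\lambda=()^\ddagger$, already furnishes the bijection $\beta_\lambda\leftrightarrow\alpha_\lambda$ between $2$-natural transformations $F^\#_\lambda\to G_\lambda$ and marked-lax natural transformations $F_\lambda\to G_\lambda$, together with the corresponding bijection on modifications. It therefore suffices to match the tight parts once a compatible loose part has been fixed.

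The heart of the matter --- and the step I expect to be the main obstacle --- is to produce from $\alpha_\delta$ a unique $\beta_\tau\colon F^\#_\tau\to G_\tau$ realising \eqtref{Equation}{eqt:tight_mor_between_weights}, and conversely. I would first invoke the universal property of the left Kan extension $L={\Lan[F_\tau J^{\scrD_\tau}_T]{J^{\scrD_\tau}_T}}$ of \longref{Construction}{constr:tight_part_left_adj}: every $2$-natural $\alpha_\delta\colon F_\tau J^{\scrD_\tau}_T\to G_\tau J^{\scrD_\tau}_T$ transposes to a unique $\widehat\beta\colon L\to G_\tau$ with $(\widehat\beta\cdot J^{\scrD_\tau}_T)\circ\pi=\alpha_\delta$. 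Writing the embedded-image factorisation as $l=\varphi\circ p$ with $p\colon L\twoheadrightarrow F^\#_\tau$ surjective-on-objects and $\gamma\colon G_\tau\rightarrowtail G_\lambda J_\bbD$ a full embedding, I would then verify that the square with top $\widehat\beta$, left leg $p$, right leg $\gamma$ and bottom $(\beta_\lambda\cdot J_\bbD)\circ\varphi$ commutes. Because $L$ is a left Kan extension, two maps out of it agree as soon as their restrictions along $\pi$ do; restricting the two composites, one becomes $(\gamma\cdot J^{\scrD_\tau}_T)\circ\alpha_\delta$, while the other, via the defining relation $(l\cdot J^{\scrD_\tau}_T)\circ\pi=(\eta_F\cdot J_\bbD\cdot J^{\scrD_\tau}_T)\circ(\theta\cdot J^{\scrD_\tau}_T)$ and $\beta_\lambda\circ\eta_F=\alpha_\lambda$, becomes $(\alpha_\lambda\cdot J_\bbD J^{\scrD_\tau}_T)\circ(\theta\cdot J^{\scrD_\tau}_T)$; these coincide exactly by \eqtref{Equation}{eqt:alpha_delta}. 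Orthogonality of surjective-on-objects functors against full embeddings (\longref{Remark}{rk:fs}), applied pointwise and assembled $2$-naturally as in the construction, then delivers a unique diagonal $\beta_\tau\colon F^\#_\tau\to G_\tau$ with $\beta_\tau\circ p=\widehat\beta$ and $\gamma\circ\beta_\tau=(\beta_\lambda\cdot J_\bbD)\circ\varphi$, the latter being precisely \eqtref{Equation}{eqt:tight_mor_between_weights}.

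Finally I would check that the assignments $\beta_\tau\mapsto(\beta_\tau\cdot J^{\scrD_\tau}_T)\circ(p\cdot J^{\scrD_\tau}_T)\circ\pi=:\alpha_\delta$ and $\alpha_\delta\mapsto\beta_\tau$ are mutually inverse: one direction is the uniqueness of the Kan transpose, and the other the uniqueness of the orthogonal lift (using that $\gamma$, a pointwise full embedding, is monic). Together with the loose bijection this yields a bijection on the objects of the two hom-categories. On morphisms, a $2$-cell on either side is a modification of the loose parts, so $\eta_F^*$ acts there exactly as the loose isomorphism of \longref{Proposition}{pro:sigma_left_adj}, whence $\eta_F^*$ is fully faithful. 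Consequently $\eta_F^*$ is an isomorphism of categories, natural in $F$ and $G$, and the inclusion admits the left adjoint $()^\#$. Everything outside the tight-part construction is bookkeeping around the two universal properties, so the commuting-square verification is the only genuinely delicate point.
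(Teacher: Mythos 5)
Your proposal is correct and follows essentially the same route as the paper's proof: transpose $\alpha_\delta$ through the Kan extension to get a map $L\to G_\tau$, verify the key square $\gamma\circ\widehat\beta=(\beta_\lambda\cdot J_\bbD)\circ l$ by restricting along $\pi$ and invoking \eqtref{Equation}{eqt:alpha_delta}, then obtain $\beta_\tau$ as the unique orthogonal lift and check mutual inverses using monicity of $\gamma$. The only difference is presentational — the paper spells out the $2$-naturality of the lifted $\varsigma$ with an explicit diagram where you defer to the assembly argument of the construction — so this is the same proof.
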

	
	\begin{proof}
		Our goal is to prove that given a dotted-lax natural transformation 
		$\alpha 
		\colon F \to G$, there exists a $2$-natural transformation $\varsigma 
		\colon F^\#_\tau \to G_\tau$ such that the transpose $\beta_\lambda 
		\colon F^\#_\lambda \to G_\lambda$ of $\alpha_\lambda$ together with 
		$\varsigma$ become an 
		$\F$-natural transformation $\beta \colon F^\# \to G$, and that $\beta 
		\circ \eta_F = \alpha$; moreover, for any 
		two $\F$-natural transformation $\beta, \beta' \colon F^\# \to G$, if 
		$\beta \circ \eta_F = \beta' \circ \eta_F$, then $\beta = \beta'$.
		
		By the universal property of the left Kan extension $L$ in 
		\longref{Construction}{constr:tight_part_left_adj}, there exists a 
		unique $2$-natural transformation $g \colon L \to G_\tau$ such that
		\begin{equation*}
			\begin{tikzcd}[row sep = large, column sep = large]
				T \ar[r, hook, "J^{\scrD_\tau}_T"] \ar[dr, hook, 
				"J^{\scrD_\tau}_T"'{name=L}]  & \scrD_\tau \ar[r, "F_\tau"] 
				\ar[Rightarrow, to=2-2, shorten=3mm, 
				"{\alpha_\delta}"'] & \Cat
				\\
				& \scrD_\tau \ar[ur, "{G_\tau}"'{name=M}] 
				& 
			\end{tikzcd}
			=
			\begin{tikzcd}[row sep = large, column sep = large]
				T \ar[r, hook, "J^{\scrD_\tau}_T"] \ar[dr, hook, 
				"J^{\scrD_\tau}_T"'{name=L}]  & \scrD_\tau \ar[r, "F_\tau"] 
				\ar[Rightarrow, to=2-2, shorten=3mm, "{\pi}"'] & 
				\Cat
				\\
				& \scrD_\tau \ar[ur, bend left=15, "{L}"{name=M}] 
				\ar[ur, bend right=30, "{G_\tau}"'{name=R}] 
				\ar[Rightarrow, from=M, to=R, shorten=2.5mm,  dashed,
				"{\exists!g}"] & 
			\end{tikzcd},
		\end{equation*}
		i.e., $(g \cdot J^{\scrD_\tau}_T) \circ \pi = \alpha_\delta$. From the 
		$2$-categorical adjunction, we have $\beta_\lambda 
		\circ \eta_F = \alpha_\lambda$, so by \eqtref{}{eqt:alpha_delta}, the 
		equation then becomes
		\begin{equation*}
			\begin{tikzcd}[row sep = huge, column sep = 4em]
				T \ar[r, hook, "J^{\scrD_\tau}_T"] \ar[dr, hook, 
				"J^{\scrD_\tau}_T"'{name=L}]  & \scrD_\tau \ar[r, "F_\tau"] 
				\ar[Rightarrow, to=2-2, shorten=4.5mm, "{\theta 
					\cdot 
					J^{\scrD_\tau}_T }"'] & \Cat
				\\
				& \scrD_\tau \ar[ur, bend left=15, "\scriptstyle{F_\lambda 
					J_\bbD}"{name=M}] 
				\ar[ur, bend right=20, outer sep = -5pt, 
				"\scriptstyle{F^\#_\lambda 
					J_\bbD}"'{name=R}] 
				\ar[Rightarrow, from=M, to=R, shorten=3mm, pos=0.3,
				"\scriptscriptstyle{\eta_F 
					\cdot 
					J_\bbD}"] \ar[ur, bend right=90, pos=0.6, 
				"\scriptstyle{G_\lambda 
					J_\bbD}"'{name=Z}] \ar[Rightarrow, from=R, to=Z, 
				shorten=1.5mm,  
				"\scriptscriptstyle{\beta_\lambda \cdot J_\bbD}"'] & 
			\end{tikzcd}
			=
			\begin{tikzcd}[row sep = huge, column sep = 4em]
				T \ar[r, hook, "J^{\scrD_\tau}_T"] \ar[dr, hook, 
				"J^{\scrD_\tau}_T"'{name=L}]  & \scrD_\tau \ar[r, "F_\tau"] 
				\ar[Rightarrow, to=2-2, shorten=4.5mm, "{\pi}"'] & 
				\Cat
				\\
				& \scrD_\tau \ar[ur, bend left=15, "{L}"{name=M}] 
				\ar[ur, bend right=20, outer sep=-2pt,
				"\scriptstyle{G_\tau}"'{name=R}] 
				\ar[Rightarrow, from=M, to=R, shorten=2.25mm,  
				"{g}"]  \ar[ur, bend right=90, outer sep=-2pt,
				"\scriptstyle{G_\lambda 
					J_\bbD}"'{name=Z}] \ar[Rightarrow, from=R, to=Z, 
				shorten=2mm,  start anchor={[yshift=1mm]}, end 
				anchor={[yshift=1mm]},
				"{\gamma}"']  & 
			\end{tikzcd}.
		\end{equation*}
		
		Besides, from the construction of $l$ in 		
		\longref{Construction}{constr:tight_part_left_adj}, we have
		\begin{equation*}
			\begin{tikzcd}[row sep = large, column sep = large]
				T \ar[r, hook, "J^{\scrD_\tau}_T"] \ar[dr, hook, 
				"J^{\scrD_\tau}_T"'{name=L}]  & \scrD_\tau \ar[r, "F_\tau"] 
				\ar[Rightarrow, to=2-2, shorten=3mm, "\scriptstyle{\theta 
					\cdot 
					J^{\scrD_\tau}_T }"'] & \Cat
				\\
				& \scrD_\tau \ar[ur, bend left=15, "\scriptstyle{F_\lambda 
					J_\bbD}"{name=M}] 
				\ar[ur, bend right=30, "\scriptstyle{F^\#_\lambda 
					J_\bbD}"'{name=R}] 
				\ar[Rightarrow, from=M, to=R, shorten=3.25mm, pos=0.3,
				"\scriptscriptstyle{\eta_F 
					\cdot 
					J_\bbD}"] & 
			\end{tikzcd}
			=
			\begin{tikzcd}[row sep = large, column sep = large]
				T \ar[r, hook, "J^{\scrD_\tau}_T"] \ar[dr, hook, 
				"J^{\scrD_\tau}_T"'{name=L}]  & \scrD_\tau \ar[r, "F_\tau"] 
				\ar[Rightarrow, to=2-2, shorten=3mm, "{\pi}"'] & 
				\Cat
				\\
				& \scrD_\tau \ar[ur, bend left=15, "{L}"{name=M}] 
				\ar[ur, bend right=30, "{F^\#_\lambda 
					J_\bbD}"'{name=R}] 
				\ar[Rightarrow, from=M, to=R, shorten=3mm, 
				"{l}"] & 
			\end{tikzcd}.
		\end{equation*}
		
		Combining the two equations, we obtain
		\begin{equation*}
			\begin{tikzcd}[row sep = huge, column sep = 4em]
				T \ar[r, hook, "J^{\scrD_\tau}_T"] \ar[dr, hook, 
				"J^{\scrD_\tau}_T"'{name=L}]  & \scrD_\tau \ar[r, "F_\tau"] 
				\ar[Rightarrow, to=2-2, shorten=4.5mm, "{\pi}"'] & 
				\Cat
				\\
				& \scrD_\tau \ar[ur, bend left=15, "{L}"{name=M}] 
				\ar[ur, bend right=20, outer sep=-2pt,
				"\scriptstyle{G_\tau}"'{name=R}] 
				\ar[Rightarrow, from=M, to=R, shorten=2.25mm,  
				"{g}"]  \ar[ur, bend right=90, 
				"\scriptstyle{G_\lambda 
					J_\bbD}"'{name=Z}] \ar[Rightarrow, from=R, to=Z, 
				shorten=2mm,  start anchor={[yshift=1mm]}, end 
				anchor={[yshift=1mm]},
				"{\gamma}"']  & 
			\end{tikzcd}
			=
			\begin{tikzcd}[row sep = huge, column sep = 4em]
				T \ar[r, hook, "J^{\scrD_\tau}_T"] \ar[dr, hook, 
				"J^{\scrD_\tau}_T"'{name=L}]  & \scrD_\tau \ar[r, "F_\tau"] 
				\ar[Rightarrow, to=2-2, shorten=4.5mm, "{\pi}"'] & 
				\Cat
				\\
				& \scrD_\tau \ar[ur, bend left=15, "{L}"{name=M}] 
				\ar[ur, bend right=20, outer sep=-5pt,
				"\scriptstyle{F^\#_\lambda J_\bbD}"'{name=R}] 
				\ar[Rightarrow, from=M, to=R, shorten=2.7mm,  
				"{l}"]  \ar[ur, bend right=90, pos=0.6, 
				"\scriptstyle{G_\lambda 
					J_\bbD}"'{name=Z}] \ar[Rightarrow, from=R, to=Z, 
				shorten=1.25mm,  
				"\scriptscriptstyle{\beta_\lambda \cdot J_\bbD}"']  & 
			\end{tikzcd}.
		\end{equation*}
		
		Now the universal property of $L$ forces $\gamma \circ g = 
		(\beta_\lambda \cdot J_\bbD) 
		\circ l$. This means that for any object $d$ in 
		$\bbD$, we have a commutative square
		\begin{center}
			\begin{tikzcd}[column sep=scriptsize]
				Ld \ar[rr, "g_d"] \ar[dd, two heads, "p_d"'] & & G_\tau d 
				\ar[dd, tail, "\gamma_d"]
				\\
				& &
				\\
				F^\#_\tau d \ar[r, tail, "\varphi_d"'] \ar[to=1-3, dashed, 
				"\exists!\varsigma_d"'] & F^\#_\lambda d \ar[r, 
				"\beta_\lambda d"'] & G_\lambda d
			\end{tikzcd},
		\end{center}
		and since $p_d$ is surjective-on-objects and $\gamma_d$ is a full 
		embedding, there exists a unique 
		lift $\varsigma_d \colon F^\#_\tau d \to G_\tau d$.
		
		Let $f, f_i \colon c \to d$ be tight morphisms in $\scrD_\tau$ for $i = 
		1, 2$, and $m \colon f_1 \Rightarrow f_2$ be a $2$-cell in $\bbD$.  
		In the following diagram:
		\begin{center}
			\begin{tikzcd}[row sep=tiny, column sep=normal]
				F^\#_\tau c \ar[dr, tail, "\varphi_c"] \ar[ddd, bend right=20 
				,"F^\#_\tau 
				f_1"'{name=A}] \ar[ddd, bend left=20 
				,"F^\#_\tau 
				f_2"{name=B}] \ar[Rightarrow, from=A, to=B, shorten=1.25mm,
				"\scriptscriptstyle{F^\#_\tau m}"] 
				\ar[rrr, dashed, "\varsigma_c"]
				& & & G_\tau c \ar[ddl, tail, "\gamma_c"'] \ar[ddd, bend 
				left=20, "G_\tau 
				f_2"{name=H}] \ar[ddd, bend right=20, "G_\tau f_1"'{name=G}] 
				\ar[Rightarrow, 
				from=G, to=H, shorten=1.25mm,
				"\scriptscriptstyle{G_\tau m}"]
				\\
				& F^\#_\lambda c \ar[dr, "\beta_\lambda c"] &  &
				\\
				& & G_\lambda c  &
				\\
				F^\#_\tau d \ar[dr, tail, "\varphi_d"'] \ar[rrr, dashed, 
				"\varsigma_d"'] & & & G_\tau d \ar[ddl, 
				tail, "\gamma_d"]
				\\
				& F^\#_\lambda d \ar[dr, "\beta_\lambda d"'] \ar[from=2-2, bend 
				left=20,
				crossing 
				over, "F^\#_\lambda f_2"{name=D}]  \ar[from=2-2, bend right=20,
				crossing 
				over, "F^\#_\lambda f_1"'{name=C}] \ar[Rightarrow, from=C, 
				to=D, shorten=1.25mm,
				"\scriptscriptstyle{F^\#_\lambda m}"]  &  &
				\\
				& & G_\lambda d \ar[from=3-3, bend left=20, crossing over, 
				pos=0.55,
				"G_\lambda f_2"{name=F}] \ar[from=3-3, bend right=20, crossing 
				over, 
				pos=0.55,
				"G_\lambda f_1"'{name=E}] \ar[Rightarrow, from=E, 
				to=F, shorten=1.25mm,
				"\scriptscriptstyle{G_\lambda m}"] &
			\end{tikzcd},
		\end{center}
		all the parallelograms in the front, i.e., those constituted by solid 
		arrows, commute, because $\varphi$, $\beta_\lambda \cdot J_\bbD$, and 
		$\gamma$ are all $2$-natural transformations, and the $1$-components of 
		$\varphi$ and $\gamma$ are monomorphisms in $\Cat$. Thus, the rectangle 
		at the 
		back, i.e., that constituted by two dashed arrows, also commutes. This 
		amounts to $G_\tau f \varsigma_c = \varsigma_d F^\#_\tau f$ and $G_\tau 
		m \varsigma_c = \varsigma_d F^\#_\tau m$. Consequently, 
		$\{\varsigma_d\}_{d \in \bbD}$ assemble to a $2$-natural transformation 
		$\varsigma \colon F^\#_\tau \to G_\tau$.
		
		From the above, we have $(\beta_\lambda \cdot J_\bbD) \circ \varphi = 
		\gamma \circ \varsigma$, which is exactly 
		\eqtref{}{eqt:tight_mor_between_weights}. Therefore, $\beta_\lambda$ 
		and $\varsigma$ together form an $\F$-natural transformation $\beta 
		\colon F^\# \to G$.
		
		Next, we check that for $t \in T$, $\varsigma_c {\eta_F}_t = 
		{\alpha_\lambda}_t$. From the $2$-categorical adjunction, we already 
		have $\beta_\lambda \circ \eta_F = \alpha_\lambda$.
		
		Indeed, for any $t \in T$, we have the following commutative diagram:
		\begin{center}
			\begin{tikzcd}[row sep=small]
				F_\tau t \ar[rr, tail, "\theta_t"] \ar[d, "p_t \pi_t"'] & & 
				F_\lambda t \ar[d, "{\eta_F}_t"]
				\\
				F^\#_\tau t \ar[d, "\varsigma_t"'] \ar[rr, "\varphi_t"'] & & 
				F^\#_\lambda \ar[d, "{\beta_\lambda}_t"]
				\\
				G_\tau t \ar[rr, tail, "\gamma_t"'] & & G_\lambda t
			\end{tikzcd},
		\end{center}
		where the top diagram is constructed in the proof of 
		\longref{Proposition}{pro:unit_dotted}, and the bottom diagram follows 
		immediately from our above construction of $\varsigma_t$. Together with 
		\eqtref{}{eqt:alpha_delta}, we obtain
		\begin{align*}
			\gamma_t {\alpha_\delta}_t = {\alpha_\lambda}_t \theta_t
			= {\beta_\lambda}_t {\eta_F}_t \theta_t
			= \gamma_t \varsigma_t p_t \pi_t,
		\end{align*}
		now since $\gamma_t$ is a monomorphism, we deduce that
		\begin{equation*}
			{\alpha_\delta}_t = \varsigma_t p_t \pi_t.
		\end{equation*}
		Altogther, we obtain $\beta \circ \eta_F = \alpha$.
		
		Finally, let $\beta, \beta' \colon F^\# \to G$ be two $\F$-natural 
		transformations such that $\beta \circ \eta_F = \beta' \circ \eta_F$.
		
		In particular, $\beta_\lambda \circ \eta_F = \beta'_\lambda \circ 
		\eta_F$, which implies $\beta_\lambda = \beta'_\lambda$. Hence, we have 
		for any $d \in \ob\bbD$,
		\begin{align*}
			\gamma_d {\beta_\tau}_d = {\beta_\lambda}_d \varphi_d
			= {\beta'_\lambda}_d \varphi_d
			= \gamma_d {\beta'_\tau}_d.
		\end{align*}
		Since $\gamma_d$ is monic, this implies ${\beta_\tau}_d = 
		{\beta'_\tau}_d$; as $d$ is arbitrary, we obtain ${\beta_\tau} = 
		{\beta'_\tau}$.	As a consequence, $\beta = \beta'$.
	\end{proof}
	
	\begin{theorem}
		\label{thm:F_left_adj}
		Let $(\bbD, \Sigma, T)$ be a dotted $\F$-category. The dotted-lax limit 
		of an $\F$-functor $S \colon \bbD \to 
		\bbA$ has the same universal property as the $\F$-weighted limit 
		$\{\triangle(\mathbf{1})^\#, 
		S\}$.
	\end{theorem}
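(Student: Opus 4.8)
The plan is to derive the theorem formally from \longref{Proposition}{pro:precomp_by_unit_is_iso} by a chain of isomorphisms in $\bbF$, natural in $A \in \ob\bbA$, in exact analogy with the $2$-categorical \longref{Theorem}{thm:sigma_left_adj}. Explicitly, for each object $A$ I would string together
\begin{align*}
	\bbA(A, \dotlim{l}{S}) &\cong [\bbD, \bbA]_{l, \Sigma, T}(\triangle(A), S)
	\\
	&\cong [\bbD, \bbF]_{l, \Sigma, T}(\triangle(1), \bbA(A, S-))
	\\
	&\cong [\bbD, \bbF](\triangle(1)^\#, \bbA(A, S-))
	\\
	&\cong \bbA(A, \{\triangle(1)^\#, S\}),
\end{align*}
where the first isomorphism is the defining universal property of the dotted-lax limit, the third is \longref{Proposition}{pro:precomp_by_unit_is_iso} instantiated at $F = \triangle(1)$ and $G = \bbA(A, S-)$, and the fourth is the defining universal property of the $\F$-weighted limit, i.e.\ the $\F$-enriched instance of \longref{Definition}{def:weighted_lim}. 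Composing these and reading off representability exhibits $\{\triangle(1)^\#, S\}$ as a dotted-lax limit of $S$, which is the claim.

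The one step needing genuine justification is the second isomorphism, the enriched-representable reduction, which is the $\F$-categorical counterpart of the second isomorphism in the proof of \longref{Theorem}{thm:sigma_left_adj}. First I would confirm that $\bbA(A, S-) \colon \bbD \to \bbF$ is itself an $\F$-weight: its loose part is the $2$-presheaf $\scrA_\lambda(A, S-)$, its tight part is $\scrA_\tau(A, S-)$, and the comparison between them is a full embedding because $J_\bbA$ is locally fully faithful, so $\bbA(A, S-)$ is a genuine $\F$-functor into $\bbF$. Then I would unwind the data componentwise on both sides: a dotted-lax natural transformation $\alpha \colon \triangle(A) \to S$ is a marked-lax natural transformation $\alpha_\lambda \colon \triangle(A)_\lambda \to S_\lambda$ whose $1$-component $\alpha_t$ is tight for every $t \in T$, and representing each component in the hom of $\bbA$ converts this datum into a marked-lax natural transformation $\triangle(1) \to \scrA_\lambda(A, S-)$ whose value at a dotted object lands in the tight part $\scrA_\tau(A, S-)$, that is, into a dotted-lax natural transformation $\triangle(1) \to \bbA(A, S-)$. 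The crucial observation is that both the $\Sigma$-marking condition (vanishing of the relevant $2$-components) and the $T$-dotting condition (tightness of the relevant $1$-components) are conditions imposed on individual components one at a time, so they are transported faithfully and functorially across representability; the same bookkeeping handles the loose parts and the modifications, making the assignment an isomorphism in $\bbF$, natural in $A$.

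I expect the only real obstacle to be this tracking of the tight/loose data, since the dotted structure entangles a marking on the morphisms of $\bbD$ with a dotting on its objects, and one must verify that representability carries the tightness of a $1$-component at a dotted object on the cone side to the tightness of the corresponding component of an $\F$-natural transformation on the weight side. Once the reduction is in place, nothing further needs to be computed: the outer isomorphisms are definitional, and the analytic heart of the matter --- the construction of the tight part via a left Kan extension and an embedded-image factorisation, and the ensuing adjunction isomorphism --- has already been discharged in \longref{Proposition}{pro:precomp_by_unit_is_iso}. In this sense the present theorem is a purely formal consequence, and I would present its proof simply as the displayed chain above with the reduction step justified as indicated.
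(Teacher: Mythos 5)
Your proposal is correct and follows essentially the same route as the paper: the paper's proof is exactly the displayed four-step chain of isomorphisms, with the third step given by \longref{Proposition}{pro:precomp_by_unit_is_iso} and the outer steps being the defining universal properties. Your additional unwinding of the second (representability) isomorphism is elaboration the paper leaves implicit, but it is the same argument.
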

	
	\begin{proof}
		We have
		\begin{align*}
			\bbA(A, \dotlim{l}{S}) &\cong [\bbD, \bbA]_{l, \Sigma, 
				T}(\triangle(A), S)
			\\
			&\cong [\bbD, \bbF]_{l, \Sigma, T}(\triangle(\mathbf{1}), \bbA(A, S-))
			\\
			&\cong [\bbD, \bbF]({\triangle(\mathbf{1})}^\#, \bbA(A, S-))
			\\
			&\cong \bbA(A, \{\triangle(\mathbf{1})^\#, S\}). \qedhere
		\end{align*}
	\end{proof}
	
	We proceed to show the converse: any $\F$-weighted limit can be 
	equivalently 
	viewed as a dotted $2$-limit.
	
	Let $\bbA, \bbD$ be $\F$-categories, $\Phi \colon \bbD \to \bbF$ be an 
	$\F$-weight and $S \colon \bbD \to \bbA$ be an $\F$-functor. Consider the 
	$\F$-category of elements of $\Phi$ with
	
	\begin{enumerate}
		\item [$\bullet$] objects the pairs $(D \in \bbD, \delta \in (\Phi 
		D)_\lambda)$;
		\item [$\bullet$] loose morphisms $(D_1, \delta_1) \leadsto (D_2, 
		\delta_2)$ given by the pairs $(d \colon D_1 \leadsto 
		D_2 \in \scrD_\lambda, \omega 
		\colon 
		Wd \delta_1 \to \delta_2)$;
		\item [$\bullet$] tight morphisms $(D_1, \delta_1) \to (D_2, \delta_2)$ 
		given by the pairs $(d \colon D_1 \to D_2 \in 
		\scrD_\tau, \omega 
		\colon 
		Wd \delta_1 \to \delta_2)$;
		\item [$\bullet$] $2$-cells $(d, \omega) \Rightarrow (d', \omega')$ 
		given by the $2$-cells $\alpha \colon d 
		\Rightarrow d'$ such that $\omega' W\alpha \delta_1 = \omega$,
	\end{enumerate}
	where the composition of two morphisms is given by the same rule as in the 
	$2$-categorical case. Similar to the $2$-category of elements, we also have 
	a projection $\F$-functor
	\begin{align*}
		P \colon \bbEl{\Phi} &\to \bbD.
	\end{align*}
	
	\begin{rk}
		In \longcite{Lambert:2020}, Lambert presented a proof of 
		$2$-categories, 
		$2$-functors, lax natural transformations, and modifications forming a 
		lax $3$-category $\twoCat_l$. Using a very similar argument, we see 
		that $\F$-categories, $\F$-functors, lax natural transformations 
		between loose parts, and modifications form a lax $3$-category 
		$\FCat_{l, \lambda}$.
		
		Following the definition of lax comma objects in 
		\longcite{Mesiti:2023}, one can easily check that $\bbEl{\Phi}$ is the 
		lax comma object 
		of 
		$\triangle(\mathbf{1}) \colon 1 \to \bbF$ and 
		$\Phi \colon \bbD \to \bbF$
		\begin{center}
			\begin{tikzcd}
				\bbEl{\Phi} \ar[r, "P"] \ar[d, "!"'] & \bbD \ar[d, "\Phi"]
				\\
				1 \ar[Rightarrow, to=1-2, 
				shorten=5mm, "\mu"']
				\ar[r, "\triangle(\mathbf{1})"'] & \bbF
			\end{tikzcd}
		\end{center}
		in $\FCat_{l, \lambda}$.
	\end{rk}
	
	\begin{pro}
		\label{pro:weighted_to_dotted}
		Let $\Sigma := \{(d \in \mor \bbD, 1)\}$ be the class of morphisms, 
		and $T := \{(D, \delta \in (\Phi D)_\tau)\}$ be the class of objects in 
		$\bbEl{\Phi}$. There is an isomorphism
		\begin{equation*}
			[\bbD, \bbF](\Phi, \bbA(A, S-)) \cong [\bbEl{\Phi}, \bbA]_{l, 
				\Sigma, T}(\triangle(A), SP)
		\end{equation*}
		in $\bbF$, or equivalently, there is an isomorphism of categories
		\begin{equation*}
			[\bbD, \bbF]_\lambda(\Phi, \bbA(A, S-)) \cong [\bbEl{\Phi}, 
			\bbA]_{l, 
				\Sigma, T, \lambda}(\triangle(A), SP),
		\end{equation*}
		which restricts to the tight parts
		\begin{equation*}
			[\bbD, \bbF]_\tau(\Phi, \bbA(A, S-)) \cong [\bbEl{\Phi}, \bbA]_{l, 
				\Sigma, T, \tau}(\triangle(A), SP).
		\end{equation*}
	\end{pro}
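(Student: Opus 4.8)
The plan is to assemble the required isomorphism in $\bbF$ from its loose and tight parts: I would obtain the loose part verbatim from the $2$-categorical \longref{Proposition}{pro:weighted_to_sigma}, and then verify that this loose isomorphism restricts correctly to the tight parts. First I would pass to loose parts and observe, by inspecting the definitions, that $(\bbEl{\Phi})_\lambda$ is exactly the $2$-category of elements $\twoEl{\Phi_\lambda}$ of the $2$-functor $\Phi_\lambda \colon \scrD_\lambda \to \Cat$; that the loose part of $\Sigma = \{(d \in \mor\bbD, 1)\}$ is the class $\{(d, 1) : d \in \mor \scrD_\lambda\}$ featuring in the $2$-categorical case; that $(\bbA(A, S-))_\lambda = \scrA_\lambda(A, S_\lambda -)$; and that $(SP)_\lambda = S_\lambda P_\lambda$. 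Hence the loose isomorphism
\[
[\bbD, \bbF]_\lambda(\Phi, \bbA(A, S-)) \cong [\bbEl{\Phi}, \bbA]_{l, \Sigma, T, \lambda}(\triangle(A), SP)
\]
is precisely the instance of \longref{Proposition}{pro:weighted_to_sigma} for the data $\scrD_\lambda$, $\scrA_\lambda$, $W = \Phi_\lambda$ and $R = S_\lambda$. From the functors $G$ and $G'$ of that proof I would record the explicit form of the bijection on objects: a $2$-natural transformation $\beta_\lambda \colon \Phi_\lambda \to \scrA_\lambda(A, S_\lambda-)$ corresponds to the marked-lax natural transformation $\alpha_\lambda$ whose $1$-component at an object $(D, \delta)$ of $\bbEl{\Phi}$ is $(\alpha_\lambda)_{(D, \delta)} = \beta_{\lambda, D}(\delta) \colon A \to SD$. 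Since this bijection is already functorial with respect to modifications, the two loose hom-categories are isomorphic.

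Next I would check that this isomorphism restricts to the tight parts, each of which is a full subcategory of the corresponding loose part. On the left, $\beta_\lambda$ lies in $[\bbD, \bbF]_\tau$ precisely when it has tight components in the sense of \longref{Remark}{rk:F-nat-tran} and \eqtref{Equation}{eqt:tight_mor_between_weights}, i.e.\ when $\beta_{\lambda, D}(\delta)$ is a tight morphism $A \to SD$ for every $D$ and every $\delta \in (\Phi D)_\tau$. On the right, $\alpha_\lambda$ lies in the tight (dotted) part precisely when its $1$-component $(\alpha_\lambda)_t$ is a tight morphism of $\bbA$ for every $t \in T$, by \longref{Definition}{def:dottedtrans}. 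Because $T = \{(D, \delta) : \delta \in (\Phi D)_\tau\}$ and the bijection satisfies $(\alpha_\lambda)_{(D, \delta)} = \beta_{\lambda, D}(\delta)$, these two conditions are literally the same statement, so the loose isomorphism carries the distinguished objects of one side bijectively onto those of the other; as both tight parts are full on their objects, it restricts to the required isomorphism
\[
[\bbD, \bbF]_\tau(\Phi, \bbA(A, S-)) \cong [\bbEl{\Phi}, \bbA]_{l, \Sigma, T, \tau}(\triangle(A), SP).
\]

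Finally, since the loose isomorphism restricts to the tight parts, the loose and tight isomorphisms are automatically compatible with the defining full embeddings of the two hom-objects, and therefore together constitute an isomorphism in $\bbF$, which is the first displayed statement; naturality in $A$ is inherited directly from \longref{Proposition}{pro:weighted_to_sigma} and requires no further work. The only genuinely delicate point I anticipate is the bookkeeping in this middle step: correctly reading the \emph{tight components} condition for an $\F$-natural transformation between $\F$-weights off \eqtref{Equation}{eqt:tight_mor_between_weights}, and matching it, through the formula $(\alpha_\lambda)_{(D, \delta)} = \beta_{\lambda, D}(\delta)$, with the dotting condition that defines $T$. Everything else is a transport of the $2$-categorical bijection along the identification of loose parts.
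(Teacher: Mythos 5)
Your proposal is correct and follows essentially the same route as the paper: the loose isomorphism is taken verbatim from \longref{Proposition}{pro:weighted_to_sigma}, and the tight parts are matched by observing that, under the explicit bijection $(\alpha_\lambda)_{(D,\delta)} = \beta_{\lambda,D}(\delta)$, the condition that $\beta_\lambda$ has tight components at each $\delta \in (\Phi D)_\tau$ (via \longref{Remark}{rk:F-nat-tran}) coincides with the dotting condition on $T$ from \longref{Definition}{def:dottedtrans}. The paper spells out the two directions of this restriction separately (tight $\beta$ gives dotted $G\beta$, and dotted $\alpha$ gives tight $G'\alpha$), but this is only a presentational difference from your single "same condition" argument.
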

	
	\begin{proof}
		The isomorphism for the loose parts are given by 
		\longref{Proposition}{pro:weighted_to_sigma}. It remains to check that 
		the tight morphisms on both sides correspond to each other.
		
		Recall that in the proof of 
		\longref{Proposition}{pro:weighted_to_sigma}, we defined a pair of 
		inverse functors
		\begin{align*}
			\label{functor:G_dotted}
			G \colon [\scrD_\lambda, \Cat](\Phi_\lambda, \scrA_\lambda(A, 
			S_\lambda-)) &\to [\scrEl{\Phi}, \Cat]_{l, 
				\Sigma}(\triangle(\mathbf{1}), \scrA_\lambda(A, S_\lambda P-)),
			\\
			\beta &\mapsto \beta P \circ \mu, \nonumber
			\\
			\Gamma \colon \beta_1 \to \beta_2 &\mapsto \Gamma P \circ \mu, 
			\nonumber
		\end{align*}
		and
		\begin{align*}
			G' \colon  [\scrEl{\Phi}, \Cat]_{l, 
				\Sigma}(\triangle(\mathbf{1}), \scrA_\lambda(A, S_\lambda P-)) &\to 
			[\scrD_\lambda, \Cat](\Phi_\lambda, \scrA_\lambda(A, 
			S_\lambda-)),
			\\
			\alpha &\mapsto \beta = \{\alpha_{(\triangle(D), -)}\}_{D \in 
				\twoD},
			\\
			\Lambda \colon \alpha_1 \to \alpha_2 &\mapsto \Theta = 
			\{\{\Lambda_{(D, 
				\delta)} \}_{\delta \in WD}\}_{D \in \twoD}.
		\end{align*}
		
		Now suppose $\beta \colon \Phi_\lambda \to \scrA_\lambda(A, S_\lambda 
		-)$ is tight, i.e., $\beta$ is a $2$-natural transformation such that 
		for any $\delta \in (\Phi D)_\tau$, $\beta_D (\delta) \colon A \to 
		S_\lambda D$ 
		is tight in $\bbA$.	Clearly, $G\beta = (\beta \circ P) \circ \mu =: 
		\alpha$ 
		is a marked-lax nat 
		transformation $\triangle(\mathbf{1}) \xrightarrow{\alpha} \scrA_\lambda(A, 
		S_\lambda P -)$. Note that $\alpha_{(D, \delta)}$ is given by the 
		composite $\beta_D \circ \mu_{(D, \delta)}$. If $(D, \delta) \in T$, 
		then that means $\mu_{(D, \delta)}$ always picks out an object in 
		$(\Phi 
		D)_\tau$, therefore $\beta_D (\delta)$ is tight by the assumption, 
		hence is $\alpha_{(D, 
			\delta)}$.
		
		Conversely, let $\alpha \colon \triangle(\mathbf{1}) \to \scrA_\lambda(A, 
		S_\lambda P -)$ be a dotted natural transformation, and so $G'(\alpha) 
		=: 
		\beta_\alpha$ is a $2$-natural transformation. Now for any $\delta \in 
		(\Phi D)_\tau$, we have $(D, \delta) \in T$, thus $\beta_{\alpha_D}(d) 
		= \alpha_{(D, \delta)}$ is tight.
	\end{proof}
	
	\begin{theorem}
		Let $\bbA, \bbD$ be $\F$-categories. Let $\Phi \colon \bbD \to \bbF$ be 
		an $\F$-weight and $S \colon \bbD 
		\to 
		\bbA$ be an $\F$-functor. Let $\Sigma := \{(d \in \mor \bbD, 1)\}$ be 
		the class of 
		morphisms, and $T := \{(D, \delta \in (\Phi D)_\tau)\}$ be the 
		collection 
		of objects in the $\F$-category of elements $\bbEl{\Phi}$ of $\Phi$, so 
		that $(\bbEl{\Phi}, \Sigma, T)$ is a dotted $\F$-category. 
		The 
		$\F$-weighted limit $\{\Phi, S\}$ has the same universal property as 
		the 
		dotted-lax limit of the $\F$-functor $SP \colon \bbEl{\Phi} \to 
		\bbA$.
	\end{theorem}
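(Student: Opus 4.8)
The plan is to prove the theorem by assembling a short chain of natural isomorphisms in $\bbF$, exactly parallel to the $2$-categorical argument at the end of \longref{Subsection}{subsec:2-equiv}, with \longref{Proposition}{pro:weighted_to_dotted} doing all the real work. First I would unfold the defining universal property of the $\F$-weighted limit: specialising \longref{Definition}{def:weighted_lim} to $\monV = \bbF$, with $\enrC = \bbD$, the weight $W = \Phi$, and the $\F$-functor $S$, yields an isomorphism
\begin{equation*}
\bbA(A, \{\Phi, S\}) \cong [\bbD, \bbF](\Phi, \bbA(A, S-))
\end{equation*}
in $\bbF$, natural in $A \in \ob\bbA$.

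Next I would apply \longref{Proposition}{pro:weighted_to_dotted} with $\Sigma := \{(d \in \mor \bbD, 1)\}$ and $T := \{(D, \delta \in (\Phi D)_\tau)\}$, so that $(\bbEl{\Phi}, \Sigma, T)$ is a dotted $\F$-category, to produce the central isomorphism
\begin{equation*}
[\bbD, \bbF](\Phi, \bbA(A, S-)) \cong [\bbEl{\Phi}, \bbA]_{l, \Sigma, T}(\triangle(A), SP)
\end{equation*}
in $\bbF$. Finally, the right-hand side is by definition (\eqtref{Equation}{def:dotlim}) the value of the representable $\bbA(A, -)$ at the dotted-lax limit, giving
\begin{equation*}
[\bbEl{\Phi}, \bbA]_{l, \Sigma, T}(\triangle(A), SP) \cong \bbA(A, \dotlim{l}{SP}),
\end{equation*}
again naturally in $A$. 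Composing the three isomorphisms and invoking the Yoneda lemma identifies the representing objects $\{\Phi, S\}$ and $\dotlim{l}{SP}$, which is the assertion.

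Since the heavy lifting is contained in \longref{Proposition}{pro:weighted_to_dotted}, the only subtlety at the level of this theorem is bookkeeping: every isomorphism in the chain must be an isomorphism in $\bbF$, matching loose parts with loose parts and tight parts with tight parts, rather than a mere isomorphism of categories. I expect this to be the one point worth stating explicitly. The loose component is inherited from the $2$-categorical \longref{Proposition}{pro:weighted_to_sigma} applied to $\scrD_\lambda$ and $\Phi_\lambda$, whereas the agreement on tight components is exactly the restriction to tight parts recorded in \longref{Proposition}{pro:weighted_to_dotted}, which hinges on the correspondence between objects of $T$ and objects $\delta \in (\Phi D)_\tau$. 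Naturality in $A$ is preserved at each stage because the comparison functors are built by whiskering with $P$ and pre-composing with the structural $2$-cell $\mu$ of the lax comma object; thus no additional obstacle arises and the proof reduces to the displayed chain.
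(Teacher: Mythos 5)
Your proposal is correct and follows essentially the same route as the paper: unfolding the defining isomorphism of the $\F$-weighted limit, applying \longref{Proposition}{pro:weighted_to_dotted} for the central step, and closing with the defining isomorphism of the dotted-lax limit, with Yoneda identifying the representing objects. Your additional remarks on tracking tight versus loose parts and naturality in $A$ are exactly the bookkeeping the cited proposition already encapsulates, so nothing further is needed.
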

	
	\begin{proof}
		By \longref{Proposition}{pro:weighted_to_dotted}, we have a chain of 
		isomorphisms
		\begin{align*}
			\bbA(A, \{\Phi, S\}) &\cong [\bbD, \bbF](\Phi, \bbD(A, S-))
			\\
			&\cong [\bbEl{\Phi}, \bbA]_{l, \Sigma}(\triangle(A), SP)
			\\
			&\cong \bbA(A, \dotlim{l}{SP}). \qedhere
		\end{align*}
	\end{proof}
	
	\section{Discussions}
	\label{sec:discuss}
	
	In \longcite{Szyld:2019}, Szyld showed the lifting theorem for 
	$2$-categorical limits using marked limits, which provides extra 
	information on the projections than the classical proof. Indeed, Szyld's 
	theorem can be rephrased in a cleaner manner in terms of enhanced 
	$2$-category theory and dotted $2$-limits, for the case of strict and lax (or 
	colax) $T$-morphisms.
	\begin{defi}
		\label{def:PIE}
		A dotted $\F$-category $(\bbD, \Sigma, \Gamma)$ is said to be 
		\emph{weakly PIE-indexing} precisely when each connected component of  
		the full sub-$\F$-category $\Sigma \subseteq \bbD$ has an initial 
		object, which is contained in $\Gamma$.
		
		A weakly PIE-indexing dotted $\F$-category is said to be \emph{strongly 
			PIE-indexing} precisely if furthermore the unique morphism from 
		the initial object to any object is always tight.
	\end{defi}
	
	\begin{nota}
		We denote by $N$ the collection of these initial objects.
	\end{nota}
	
	Now, {\cite[Theorem 3.3]{Szyld:2019}} can be reformulated as
	\begin{theorem}
		\label{thm:lift_p}
		Let $(\bbD, \Sigma, \Gamma)$ be a weakly PIE-indexing dotted 
		$\F$-category, and let $T$ be a $2$-monad on a $2$-category $\twoA$, 
		viewed as a chordate 
		$\F$-category. Let $S \colon \bbD \to \FTAlg_{s, p}$ 
		be an $\F$-functor. Denote by $U_{s, p} \colon \FTAlg_{s, p} \to \twoA$ 
		the 
		underlying $\F$-functor. If $\dotlim{l}{U_{s, p}S}$ exists in $\twoA$, 
		then 
		$\dotlim{l}{S}$ exists in $\FTAlg_{s, p}$, and is preserved by $U_{s, 
			p}$.
		
		Moreover, the projections $\{\pi_n\}_{n \in N}$ of $\dotlim{l}{S}$ are 
		tight and jointly 
		detect tightness.
	\end{theorem}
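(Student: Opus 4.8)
The plan is to obtain the lift on loose parts from the $2$-categorical marked-limit lifting theorem and then upgrade it to the enhanced setting by bookkeeping tightness. Write $M := \dotlim{l}{U_{s, p}S}$ for the limit assumed to exist in $\twoA$; since $\twoA$ is chordate the dotting condition is vacuous there, so $M$ is simply the marked-lax limit of $U_{s, p}S$ for the marking $\Sigma_\lambda$, equipped with a universal marked-lax cone whose $1$-components $p_d \colon M \to A_d$ (writing $Sd = (A_d, a_d)$) satisfy $p_f = 1$ for every $f \in \Sigma$. The loose part $S_\lambda \colon \scrD_\lambda \to \TAlg_p$ lands in pseudo $T$-algebras, and \longref{Definition}{def:PIE} says precisely that $(\scrD_\lambda, \Sigma_\lambda)$ is PIE-indexing in the sense of \longcite{Szyld:2019}, with initial objects $N$. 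Hence \cite[Theorem 3.3]{Szyld:2019} applies and yields a strict $T$-algebra $(M, \ell)$ whose underlying object is $M$, such that $(M, \ell)$ is the marked-lax limit of $S_\lambda$ in $\TAlg_p$, is preserved by the forgetful $2$-functor $\TAlg_p \to \twoA$, and has strict projections $p_n$ for $n \in N$.

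For the enhanced statement it helps to recall how $\ell$ arises, since this is what exposes tightness. One anchors the algebra structure at the initial objects: put $c_n := a_n \cdot T p_n$, and for an arbitrary $d$ in the $\Sigma$-component of its initial object $n$ propagate along the unique $\Sigma$-morphism $\iota_d \colon n \to d$ by $c_d := A_{\iota_d} \cdot c_n$, which is forced by $p_{\iota_d} = 1$. The coherence $2$-cells of this family are assembled from the pseudo-structure cells $\overline{Sf}$ and the cone cells $p_f$, and by construction they are identities on all of $\Sigma$; thus $(c_d)_d$ is a marked-lax cone with nadir $TM$, and the universal property of $M$ produces the unique $\ell$ with $p_d \cdot \ell = c_d$, the algebra axioms following from its uniqueness clause. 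Reading off the comparison cells gives $p_n \cdot \ell = a_n \cdot T p_n$, so the structure cell $\overline{p_n} = 1$ and each $p_n$ is tight, whereas for general $d$ one gets $\overline{p_d} = \overline{S\iota_d} \cdot T p_n$, which is invertible exactly because $S\iota_d$ is a \emph{pseudo} $T$-morphism; this is why weak, rather than strong, PIE-indexing already suffices for $\FTAlg_{s, p}$.

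It remains to promote $(M, \ell)$ to the dotted-lax limit inside $\bbA := \FTAlg_{s, p}$, i.e. to show that $\bbA(X, (M, \ell))$ is isomorphic in $\bbF$ to $[\bbD, \bbA]_{l, \Sigma, \Gamma}(\triangle(X), S)$, naturally in $X$. The loose part of this isomorphism is exactly the statement just imported from \longcite{Szyld:2019}; for the tight part one uses that $N \subseteq \Gamma$ and that $\Gamma$ is closed under tight $\Sigma$-morphisms, so a dotted-lax cone is a marked-lax cone together with tightness at every $d \in \Gamma$, and through $p_d = A_{\iota_d} \cdot p_n$ this is governed by tightness at $N$. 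Concretely, for a pseudo $T$-morphism $h \colon X \to (M, \ell)$ the structure cell $\overline{h}$ is jointly reflected by the projections, and since each $p_n$ is tight we have $p_n \cdot \overline{h} = \overline{p_n \cdot h}$; hence $\overline{h} = 1$ as soon as every $p_n \cdot h$ is tight, which gives both that the lifted projections $\{\pi_n\}_{n \in N}$ (that is, the $p_n$ regarded in $\bbA$) jointly detect tightness and, together with preservation by $U_{s, p}$ and naturality, the $\F$-enriched universal property. The step I expect to be the genuine obstacle is verifying that the propagated family $(c_d)_d$ really satisfies the marked-lax cocycle conditions: this is the diagram chase that interlocks the lax naturality of the universal cone $p$, the multiplicative and unital coherence of the pseudo $T$-morphisms $Sf$, and the consistency forced by the initiality of the $n \in N$, and it is here that the PIE-indexing hypothesis does its essential work.
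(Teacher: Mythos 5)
Your overall strategy is the right one: the paper gives no independent proof of this theorem (it is presented as a direct reformulation of \cite[Theorem 3.3]{Szyld:2019}), so importing Szyld's $2$-categorical lifting theorem for the loose part, reconstructing the algebra structure $\ell$ by anchoring at the initial objects and propagating along the unique $\Sigma$-morphisms, and then bookkeeping tightness is exactly the intended route. Your treatment of the loose part, the strictness of the projections $\pi_n$ for $n \in N$ (via $\overline{\pi_n} = 1$), and the joint-detection argument (using $\overline{\pi_n \circ h} = p_n \cdot \overline{h}$, the factorisation $p_d \cdot \overline{h} = US\iota_d \cdot p_n \cdot \overline{h}$, and the fact that $2$-cells into the marked-lax limit are jointly reflected by the projections) are all correct.

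The genuine gap is the sentence asserting that tightness ``at every $d \in \Gamma$'' is governed by tightness at $N$ through $p_d = A_{\iota_d} \cdot p_n$. For $\dotlim{l}{S}$ to exist in $\FTAlg_{s, p}$ \emph{and be preserved by} $U_{s,p}$, the universal cone must itself be a dotted-lax cone, i.e.\ $\pi_\gamma$ must be \emph{strict} for every $\gamma \in \Gamma$, not only for $\gamma \in N$. But by your own computation $\overline{\pi_\gamma} = \overline{S\iota_\gamma} \cdot Tp_n$, and when the connecting morphism $\iota_\gamma$ is loose this $2$-cell is merely invertible, not the identity: invertibility makes $\pi_\gamma$ pseudo, whereas tightness in $\FTAlg_{s,p}$ means strict. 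Weak PIE-indexing does not exclude this situation: the dotted-category axiom only forces $\Gamma$ to be closed under tight $\Sigma$-morphisms, it does not force every object of $\Gamma$ to be tightly reachable from $N$. Concretely, take the $p$-rigged inserter shape but dot $y$ as well as $x$, so $\Sigma = \{\mathrm{identities}, g\}$ with $g$ loose and $\Gamma = \{x, y\}$; this is weakly PIE-indexing. If $T$ is the free strict monoidal category $2$-monad on $\Cat$ and $Sg$ is a strong-but-not-strict monoidal functor, the marked condition $\pi_g = 1$ forces $\pi_y = Sg \circ \pi_x$, whose structure cell $\overline{Sg} \cdot Tp_x$ cannot be the identity (here $p_x$ is surjective on objects); so no dotted cone lies over the universal marked cone and preservation fails. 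Thus your argument (and, read literally, the theorem itself) requires the additional hypothesis that every object of $\Gamma$ receives a tight $\Sigma$-morphism from the initial object of its component --- automatic when $\Gamma = N$, as in all of the paper's $p$-rigged examples, and automatic under the strong PIE-indexing hypothesis of \longref{Theorem}{thm:lift_l,c}. With that hypothesis your proof closes: $\iota_\gamma$ tight makes $S\iota_\gamma$ strict, hence $\overline{\pi_\gamma} = 1$, and both directions of the tight part of the universal property go through as you describe.
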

	
	\begin{theorem}
		\label{thm:lift_l,c}
		Let $(\bbD, \Sigma, \Gamma)$ be a strongly PIE-indexing dotted 
		$\F$-category, and let $T$ be a $2$-monad on a $2$-category $\twoA$, 
		viewed as a chordate 
		$\F$-category. Let $S \colon \bbD \to \FTAlg_{s, c}$ 
		be an $\F$-functor. Denote by $U_{s, c} \colon \FTAlg_{s, c} \to \twoA$ 
		the 
		underlying $\F$-functor. If $\dotlim{l}{U_{s, c}S}$ exists in $\twoA$, 
		then 
		$\dotlim{l}{S}$ exists in $\FTAlg_{s, c}$, and is preserved by $U_{s, 
			c}$.
		
		Moreover, the projections $\{\pi_n\}_{n \in N}$ of $\dotlim{l}{S}$ are 
		tight and jointly 
		detect tightness.
		
		Dually, If $\dotlim{c}{U_{s, l}S}$ exists in $\twoA$, then 
		$\dotlim{c}{S}$ exists in $\FTAlg_{s, l}$, and is preserved by $U_{s, 
			l}$, and the projections of 
		$\dotlim{c}{S}$ are 
		tight and jointly 
		detect tightness.
	\end{theorem}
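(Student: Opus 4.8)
The plan is to prove that $U_{s,c}$ \emph{creates} the dotted-lax limit, with the strong PIE-indexing hypothesis being exactly what forces the lifted cone to have strict projections. Set $L \defeq \dotlim{l}{U_{s,c}S}$, which exists by hypothesis, and let $(p_d, \kappa)$ be its universal dotted-lax cone in $\twoA$; thus $p_d \colon L \to A_d$ (writing $Sd = (A_d, a_d)$ and $A_d = U_{s,c}Sd$), and for each loose $e \colon d_1 \leadsto d_2$ there is a $2$-cell $\kappa_e \colon (U_{s,c}Se)\, p_{d_1} \Rightarrow p_{d_2}$ which is the identity whenever $e \in \Sigma$. Denote by $n$ the initial object of the $\Sigma$-component of a given object $c$, and by $\iota_c \colon n \to c$ the unique $\Sigma$-morphism; by strong PIE-indexing $\iota_c$ is tight, so $S\iota_c$ is a \emph{strict} $T$-morphism. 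Because $\kappa$ vanishes on $\Sigma$, the identity $\kappa_{\iota_c}=1$ gives the factorisation $p_c = U_{s,c}S(\iota_c)\, p_n$, so that all projections are determined by those at the initial objects $N \subseteq \Gamma$.

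First I would equip $L$ with a $T$-algebra structure $a_L \colon TL \to L$. I define a candidate cone over $U_{s,c}S$ with apex $TL$ by $q_d \defeq a_d\, Tp_d$, with connecting $2$-cells
\[ \mu_e \defeq (a_{d_2}\cdot T\kappa_e)\circ(\overline{Se}\cdot Tp_{d_1}) \colon (U_{s,c}Se)\, q_{d_1} \Rightarrow q_{d_2}, \]
where $\overline{Se}\colon (U_{s,c}Se)\,a_{d_1}\Rightarrow a_{d_2}\,T(U_{s,c}Se)$ is the colax structure of $Se$. The crucial point is that this is a \emph{dotted-lax} cone, i.e.\ $\mu_e = 1$ for $e \in \Sigma$: for such $e$ we have $\kappa_e = 1$, so $\mu_e = \overline{Se}\cdot Tp_{d_1}$; writing $p_{d_1} = U_{s,c}S(\iota_{d_1})\,p_n$ and using $e\circ \iota_{d_1} = \iota_{d_2}$ together with the strictness of $S\iota_{d_1}$ and $S\iota_{d_2}$, functoriality of $S$ forces $\overline{Se}\cdot T(U_{s,c}S\iota_{d_1}) = 1$, whence $\mu_e = 1$. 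The universal property of $L$ then yields a unique $a_L\colon TL \to L$ with $p_d\, a_L = a_d\, Tp_d$.

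I would then verify that $(L, a_L)$ is a $T$-algebra and that the lifted projections are strict. The associativity and unit equations for $a_L$ are checked by post-composing with each $p_d$, using naturality of $\mu$ and $\eta$ and the algebra axioms of $(A_d, a_d)$, and then invoking the joint faithfulness of the cone $(p_d)$ (the universal isomorphism is an isomorphism of categories). Since $p_d\, a_L = a_d\, Tp_d$ holds on the nose, each projection lifts to a \emph{strict} $T$-morphism $\pi_d$; in particular the projections at $N$ are tight, and the family $(\pi_d,\kappa)$ is a dotted-lax cone in $\FTAlg_{s,c}$ (the tightness condition at $\Gamma$ holds trivially). For universality, given any $T$-algebra $(B,\beta)$ and a dotted-lax cone from it, its underlying cone in $\twoA$ induces a unique $h\colon B \to L$; one upgrades $h$ to a colax $T$-morphism by inducing its structure $2$-cell from those of the cone via the universal property, and checks that strictness is reflected, giving the required isomorphism in $\bbF$ natural in $(B,\beta)$. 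This simultaneously shows $U_{s,c}$ preserves the limit.

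Finally, the projections $\{\pi_n\}_{n\in N}$ jointly detect tightness: for a colax morphism $g\colon (B,\beta)\leadsto (L,a_L)$ with structure $\overline{g}$, the composite $\pi_n g$ has structure $p_n\cdot\overline{g}$, so if every $\pi_n g$ is strict then $p_n\cdot\overline{g}=1$; using $p_c = U_{s,c}S(\iota_c)\,p_n$ this propagates to $p_d\cdot\overline{g}=1$ for all $d$, and joint faithfulness of $(p_d)$ forces $\overline{g}=1$. The lax case is entirely dual, replacing the dotted-lax limit by the dotted-colax limit and colax structures by lax ones. The main obstacle is the crux of the second paragraph, namely showing $\mu_e=1$ on the loose morphisms of $\Sigma$. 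This is precisely where \emph{strongly} (rather than weakly) PIE-indexing is indispensable: for colax morphisms the non-invertible $2$-cells $\overline{Se}$ cannot be absorbed as in the pseudo case of \longref{Theorem}{thm:lift_p}, and must instead be annihilated using the tightness of the initial morphisms $\iota_c$.
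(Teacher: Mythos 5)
The first thing to note is that the paper contains no proof of this theorem at all: both \longref{Theorem}{thm:lift_p} and \longref{Theorem}{thm:lift_l,c} are presented as reformulations of Szyld's lifting theorem \cite[Theorem 3.3]{Szyld:2019} in the language of dotted limits, with the translation left implicit. Your proposal is therefore a genuinely different route: a direct, self-contained argument that $U_{s,c}$ creates the dotted-lax limit, which is in substance the algebra-structure construction that must underlie any proof of such a lifting theorem, transposed to colax morphisms. Your identification of where \emph{strong} (rather than weak) PIE-indexing enters is exactly right and is the mathematical heart of the matter: for $e \in \Sigma$ the candidate structure cell reduces to $\mu_e = \overline{Se}\cdot Tp_{d_1}$; since $\iota_{d_1}$ and $\iota_{d_2}$ are tight, $S\iota_{d_1}$ and $S\iota_{d_2}$ are strict, so applying the composition formula for colax structure cells to $S(e)S(\iota_{d_1}) = S(\iota_{d_2})$ gives $\overline{Se}\cdot T(U_{s,c}S\iota_{d_1}) = 1$, whence $\mu_e = 1$ after whiskering with $Tp_n$. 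This computation has no analogue in the pseudo case, where invertibility of $\overline{Se}$ does the work, and your proof makes explicit what the paper delegates entirely to the citation. What your approach buys is an actual proof, visible hypotheses-to-conclusions bookkeeping, and the observation that every projection $\pi_d$ (not merely those at $N$) is strict; what the paper's approach buys is brevity, at the cost of never exhibiting the argument.

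A few points in your sketch need tightening before it is complete. Your repeated appeal to ``joint faithfulness of $(p_d)$'' is misnamed: the universal property of $L$ identifies morphisms and $2$-cells into $L$ with marked-lax \emph{cones} and their modifications, so two morphisms agree only when their whole induced cones agree, including the whiskerings of the cone $2$-cells $\kappa_e$. Concretely: (i) the algebra axioms for $a_L$ require checking that both sides induce the same marked cone from $T^2L$, i.e.\ the $\kappa_e$-components as well as the $p_d$-components; (ii) in the detection argument, from $p_d\cdot\overline{g}=1$ for all $d$ one argues that the induced modification has identity components, hence its source and target cones are equal and it is itself an identity, giving $\overline{g}=1$; (iii) for the lifted cone to live in $\FTAlg_{s,c}$ you must record that each $\kappa_e$ is a $T$-transformation, which is precisely the equation $\kappa_e \cdot a_L = \mu_e$ that the universal property hands you when producing $a_L$. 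Finally, since the statement asks for an isomorphism in $\bbF$, the loose-part bijection must be shown to restrict to tight parts: a cone whose components at $\Gamma$ are strict is strict everywhere, because the marking forces $f_d = S(\iota_d)\circ f_n$ with $n \in N \subseteq \Gamma$, and hence induces a strict $h$. All of these are routine completions rather than obstacles; with them your argument, and its dual for $\FTAlg_{s,l}$ and dotted-colax limits, is correct.
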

	
	It is shown in \longcite{LS:2012} that $w$-rigged $\F$-weighted limits 
	such as $w$-rigged inserters, $w$-rigged equifiers, $w$-rigged 
	$w$-descent objects and $w$-rigged comma objects all lift to 
	$\FTAlg_{s, w'}$, where $w' = c$ if $w = l$ and vice versa. Indeed, 
	they are precisely the limits that lift. Looking at 
	the examples of $p$-rigged limits illustrated in 
	\longref{Section}{subsec:eg}, it 
	is clear that the assumptions in \longref{Theorem}{thm:lift_p} are met;
	looking at 
	the examples of $l$-rigged and $c$-rigged limits illustrated in 
	\longref{Section}{subsec:eg}, it is 
	immediate that 
	the assumptions in \longref{Theorem}{thm:lift_l,c} are fulfilled. In 
	other 
	words, through the lens of dotted $2$-limits, we see that Szyld's lifting 
	theorem also applies to some of the $\F$-categorical limits.
	
	Nevertheless, the alternating projective limits in 
	\longref{Example}{eg:alt}, which is shown to 
	lift in \longcite{LS:2012}, do not satisfy the assumptions: there is no 
	initial object in the indexing dotted $\F$-category, as $m$ 
	tends to infinity.
	
	A possible direction and further exploration on dotted $2$-limits is to 
	characterise $w$-rigged 
	$\F$-weighted limits in the language of dotted $2$-limits, so that a more 
	elementary and explicit description of rigged limits can be found.

	\bibliographystyle{alpha}

\end{document}